\g@addto@macro\normalsize{%
    \setlength\abovedisplayskip{10pt}
    \setlength\belowdisplayskip{10pt}
    \setlength\abovedisplayshortskip{10pt}
    \setlength\belowdisplayshortskip{10pt}
}
\newcommand{\rd}{\color{red!70!black}}
\newcommand{\bk}{\color{black}}
\newcommand{\bbE}{\mathbb{E}}
\newcommand{\bbC}{\mathbb{C}}
\newcommand{\bbN}{\mathbb{N}}
\newcommand{\bbP}{\mathbb{P}}
\newcommand{\bbR}{\mathbb{R}}
\newcommand{\bbS}{\mathbb{S}}
\newcommand{\bV}{\mathbf{V}}
\newcommand{\be}{\mathbf{e}}
\newcommand{\bx}{\mathbf{x}}
\newcommand{\by}{\mathbf{y}}
\newcommand{\bv}{\mathbf{v}}
\newcommand{\bw}{\mathbf{w}}
\newcommand{\bu}{\mathbf{u}}
\newcommand{\bs}{\mathbf{s}}
\newcommand{\bt}{\mathbf{t}}
\newcommand{\bone}{\mathbf{1}}
\newcommand{\bzero}{\mathbf{0}}
\newcommand{\cA}{\mathcal{A}}
\newcommand{\cN}{\mathcal{N}}
\newcommand{\cP}{\mathcal{P}}
\newcommand{\cF}{\mathcal{F}}
\newcommand{\cS}{\mathcal{S}}
\newcommand{\cT}{\mathcal{T}}
\newcommand{\cM}{\mathcal{M}}
\newcommand{\cG}{\mathcal{G}}
\newcommand{\fS}{\mathfrak{S}}
\newcommand{\Ber}{\mathsf{Ber}}
\newcommand{\Binomial}{\mathsf{Binomial}}
\newcommand{\Multinomial}{\mathsf{Multinomial}}
\newcommand{\BL}{\mathrm{BL}}
\newcommand{\ind}{\mathds{1}}
\newcommand{\Lip}{\mathrm{Lip}}
\renewcommand{\sc}{\mathsf{sc}}
\newcommand{\Gauss}{\mathsf{G}}
\newcommand{\op}{\mathrm{op}}
\newcommand{\mubar}{\bar{\mu}}
\newcommand{\sign}{\mathrm{sign}}
\DeclareMathOperator{\Var}{Var}
\DeclareMathOperator{\Cov}{Cov}
\newcommand{\perms}{\mathsf{Perm}}
\newcommand{\dist}{\mathrm{dist}}
\newcommand{\spn}{\mathrm{span}}
\newcommand{\la}{\langle}
\newcommand{\ra}{\rangle}
\newcommand{\GOE}{\mathsf{GOE}}
\newcommand{\GOTE}{\mathsf{GOTE}}
\renewcommand{\vec}{\mathrm{vec}}
\newcommand{\vech}{\mathrm{vech}}
\newcommand{\convas}{\xrightarrow{\text{a.s.}}}
\newcommand{\convp}{\xrightarrow{p}}
\newcommand{\convd}{\xrightarrow{d}}
\theoremstyle{plain}
\newtheorem{theorem}{Theorem}[section]
\newtheorem{corollary}{Corollary}[section]
\newtheorem{proposition}{Proposition}[section]
\newtheorem{lemma}{Lemma}[section]
\theoremstyle{definition}
\newtheorem{definition}{Definition}[section]
\theoremstyle{remark}
\newtheorem{remark}{Remark}[section]
\let\tilde\widetilde
\title[Spectra of Contractions of Tensor-GOE]{Spectra of contractions of the Gaussian Orthogonal \\ Tensor Ensemble}
\author[S. S. Mukherjee and H. Talukdar]{Soumendu Sundar Mukherjee and Himasish Talukdar}
\address{
    Statistics and Mathematics Unit \\
    Indian Statistical Institute \\
    203 B.T. Road, Kolkata 700108 \\
    West Bengal, India.
}
\email{ssmukherjee@isical.ac.in, talukdar.himasish@gmail.com}
\begin{document}

\begin{abstract}
In this article, we study the spectra of matrix-valued contractions of the Gaussian Orthogonal Tensor Ensemble (GOTE). Let $\cG$ denote a random tensor of order $r$ and dimension $n$ drawn from the density
\[
    f(\cG) \propto \exp\bigg(-\frac{1}{2r}\|\cG\|^2_{\mathrm{F}}\bigg).
\]
For $\bw \in \bbS^{n - 1}$, the unit-sphere in $\bbR^n$, we consider the matrix-valued contraction $\cG \cdot \bw^{\otimes (r - 2)}$ when both $r$ and $n$ go to infinity such that $r / n \to c \in [0, \infty]$. We obtain semi-circle bulk-limits in all regimes, generalising the works of 
\cite{goulart2022random, au2023spectral, bonnin2024universality}
in the fixed-$r$ setting.

We also study the edge-spectrum. We obtain a Baik-Ben Arous-P\'{e}ch\'{e} phase-transition for the largest and the smallest eigenvalues at $r = 4$, generalising a result of
\cite{mukherjee2024spectra}
in the context of adjacency matrices of random hypergraphs. For $r = 3$, the extreme eigenvalues stick to the edges of the support of the semi-circle law, while for $r \ge 4$, two outlier eigenvalues emerge. We also show that for $r \ge 4$, the extreme eigenvectors of $\cG \cdot \bw^{\otimes (r - 2)}$ contain non-trivial information about the contraction direction $\bw$. In fact, in each of the regimes $1 \ll r \ll n$ and $r \gg n$, one may identify two explicit (data-dependent) vectors, one of which is perfectly aligned with $\bw$.

Finally, we report some results, in the case $r = 4$,  on mixed contractions $\cG \cdot \bu \otimes \bv$, $\bu, \bv \in \bbS^{n - 1}$. While the total variation distance between the joint distribution of the entries of $\cG \cdot \bu \otimes \bv$ and that of $\cG \cdot \bu \otimes \bu$ goes to $0$ when $\|\bu - \bv\| = o(n^{-1})$, the bulk and the largest eigenvalues of these two matrices have the same limit profile as long as $\|\bu - \bv\| = o(1)$. Furthermore, it turns out that there are no outlier eigenvalues in the spectrum of $\cG \cdot \bu \otimes \bv$ when $\langle \bu, \bv\rangle = o(1)$.
\end{abstract}

\keywords{Tensor contractions, spiked Wigner model}
\subjclass{Primary 60B20}
\maketitle
\thispagestyle{empty}

\section{Introduction}
A (real) tensor of \emph{order} $r$ and dimension $n$ is an element of $(\bbR^n)^{\otimes r}$ and can be represented conveniently as a
multi-dimensional array of real numbers with respect to a chosen basis of $\bbR^n$. In this article, we always work with such a concrete representation as a multi-dimensional array and by an abuse of terminology call such arrays tensors.
A matrix is thus an order-$2$ tensor. A tensor $\cT$ is called \emph{symmetric} (also called super-symmetric by some authors) if for any permutation $\sigma$ of $[r] := \{1, \ldots, r\}$, one has
\[
    \cT_{i_1, \ldots, i_r} = \cT_{i_{\sigma(1)}, \ldots, i_{\sigma(r)}}.
\]

Let $\fS_{r, n}$ denote the set of all symmetric tensors of order $r$ and dimension $n$. Equipped with the (Frobenius) inner product
\[
    \langle \cS, \cT\rangle_F := \sum_{(i_1, \ldots, i_r) \in [n]^r} \cS_{i_1, \ldots, i_r} \cT_{i_1, \ldots, i_r}, \quad \cS, \cT \in \fS_{r, n},
\]
it becomes a (finite-dimensional) Hilbert space. The Frobenius norm of a tensor $\cT \in \fS_{r, n}$ is $\|\cT\|_{\mathrm{F}} := \sqrt{\langle \cT, \cT\rangle_{\mathrm{F}}}$.

A symmetric random tensor $\cG \in \fS_{r, n}$ is said to belong to the \emph{Gaussian Orthogonal Tensor Ensemble (GOTE)} if, as a tensor-valued random variable, it has the following density with respect to the natural Lebesgue measure on $\fS_{r, n}$:
\[
    f(\cG) = \frac{1}{Z_{n, r}} \exp\bigg(-\frac{1}{2r}\|\cG\|^2_{\mathrm{F}}\bigg),
\]
where $Z_{n, r} := \int \exp\big(-\frac{1}{2r}\|\cG\|^2_{\mathrm{F}}\big)$.
We shall write that $\cG \sim \GOTE(r, n)$. 

If $\cG \sim \GOTE(r, n)$, then it may be shown that
$\cG_{i_1, \ldots, i_r} \sim \cN(0, \sigma_{i_1, \ldots, i_r}^2)$, where
\[
    \sigma_{i_1, \ldots, i_r}^2 = \frac{r}{\#\perms(i_1, \ldots, i_r)},
\]
where $\perms(i_1, \ldots, i_r)$ is the set of all permutations of the index vector $(i_1, \ldots, i_r)$, viewed as a multiset. Notice that $\GOTE(2, n)$ is the familiar Gaussian Orthogonal Ensemble (GOE) from random matrix theory.

In this article, we are interested in the spectra of matrix-valued contractions of GOTE tensors. For $\bw^{(1)}, \ldots, \bw^{(r - 2)} \in 
\bbS^{n - 1} := \{\bu \in \bbR^n : \|\bu\|_2 = 1\}$, consider the matrix
\begin{equation}\label{eq:mixed_contraction}
    M_n = \cG \cdot \bw^{(1)} \otimes \cdots \otimes \bw^{(r -2)},
\end{equation}
where
\[
    M_{n, ij} = \sum_{i_3, \cdots, i_r} \cG_{iji_3\cdots i_r}  w^{(1)}_{i_3} \cdots w^{(r - 2)}_{i_r}.
\]
When the \emph{contraction directions} $\bw^{(1)}, \ldots, \bw^{(r - 2)}$ are all the same, say $\bw^{(1)} = \cdots = \bw^{(r - 2)} = \bw$, we write
\begin{equation}\label{eq:pure_contraction}
    M_n = \cG \cdot \bw^{\otimes (r - 2)}.
\end{equation}
We refer to \eqref{eq:pure_contraction} as a \emph{pure} contraction as opposed to \eqref{eq:mixed_contraction} which we call a \emph{mixed} contraction.  

Besides their inherent mathematical appeal, tensor contractions show up naturally in various contexts. For instance, they appear when analysing the maximum likelihood estimator in tensor principal component analysis \citep{goulart2022random}. In another direction, adjacency matrices of hypergraphs, which are widely used in hypergraph algorithms,%
are precisely contractions of the corresponding adjacency tensors along the direction $\bone / \sqrt{n}$.

Another example comes from taking $\bw = \be_1$, in which case the entries of the pure contraction \eqref{eq:pure_contraction} are given by
\[
    M_{n, ij} = \sum_{i_3, \ldots,  i_r} \cG_{i j i_3 \cdots i_r} \delta_{i_3 1} \cdots \delta_{i_r 1} = \cG_{ij1 \cdots 1}.
\]
This is a (Gaussian) Wigner-type matrix with the following variance profile:
\begin{align*}
    \Var(M_{ij}) &= \frac{1}{r - 1} \ind(1 \notin \{i, j\}) + \ind(1 \in \{i, j\}) \\
    \Var(M_{ii}) &= \frac{2}{r - 1} \ind(i \ne 1) + r \ind(i = 1).
\end{align*}
Such Wigner-type matrices have been studied in great detail in recent years (see, e.g., \cite{ajanki2016universality}). 
However, in general, the entries of $\cG \cdot \bw^{\otimes (r - 2)}$ are highly correlated, rendering their analysis inaccessible via existing results on correlated Wigner matrices (such as the bulk results in \cite{pastur2011eigenvalue,  chakrabarty2013limiting, chakrabarty2016random, gotze2015limit, che2017universality,  erdHos2019random, catalano2024random} or the edge results in \cite{alt2020correlated, adhikari2019edge, reker2022operator, banerjee2024edge}. For more details on this, we refer the reader to the discussions in the introductions of \cite{au2023spectral, mukherjee2024spectra, banerjee2024edge}.

\subsection{Preliminaries on random matrices} Let $A_n$ be an $n \times n$ Hermitian matrix with ordered eigenvalues $\lambda_1 \ge \cdots \ge \lambda_n$. The probability measure 
\[
    \mu_{A_n} := \frac{1}{n} \sum_{i = 1}^n \delta_{\lambda_i}
\]
is called the \emph{Empirical Spectral Distribution} (ESD) of $A_n$. If entries of $A_n$ are random variables defined on a common probability space $(\Omega, \cA, \bbP)$ then $\mu_{A_n}$ is a random probability measure. In that case, there is another probability measure associated to the eigenvalues, namely the \textit{Expected Empirical Spectral Distribution} (EESD) of $A_n$, which is defined via its action on bounded measurable test functions $f$ as follows:
\[
    \int f \, d\mubar_{A_n} = \bbE \int f \, d\mu_{A_n},
\]
where $\bbE$ denotes expectation with respect to $\bbP$. In random matrix theory, one is typically interested in an ensemble $(A_n)_{n \ge 1}$ of such matrices of growing dimension $n$. If the weak limit, say $\mu_{\infty}$, of the sequence $(\mubar_{A_n})_{n \ge 1}$, exists, then it is referred to as the Limiting Spectral Distribution (LSD). Often one is able to show that the random measure $\mu_{A_n}$ also converges weakly (in probability or in almost sure sense) to $\mu_{\infty}$. For a comprehensive introductory account of the theory of random matrices, we refer the reader to \cite{anderson2010introduction}.

The preeminent model of random matrices is perhaps the Wigner matrix. For us a Wigner matrix $W_n$ will be a Hermitian random matrix whose upper triangular entries $W_{n, i, j}$ are i.i.d. zero mean unit variance random variables and the diagonal entries $W_{n, i, i}$ are i.i.d. zero mean random variables with finite variance. Moreover, the diagonal and the off-diagonal entries are mutually independent. If the entries are jointly Gaussian with the diagonal entries having variance $2$, then resulting ensemble of matrices is called the Gaussian Orthogonal Ensemble (GOE). In this article, we will denote the centered Gaussian distribution with variance $\sigma^2$ by $\nu_{\Gauss, \sigma^2}$.

We also define the \emph{semi-circle distribution with variance $\sigma^2$}, henceforth denoted by $\nu_{\sc, \sigma^2}$, as the probability distribution on $\bbR$ with density
\[
    f(x) := \begin{cases}
    \frac{1}{2\pi \sigma^2} \sqrt{4 \sigma^2 - x^2} & \text{if} \, |x| \le 2 \sigma, \\
    0  & \text{otherwise}. 
\end{cases}
\]
E. Wigner proved in his famous paper \cite{wigner1958distribution} that the EESD of $n^{-1/2} W_n$ converges weakly to the standard semi-circle distribution $\nu_{\sc, 1}$. 

\subsection{Related works on tensor contractions}
\cite{gurau2020generalization} considered the GOTE and showed that the limit of an appropriate generalization of the Stieltjes transform can be described by a generalized Wigner law, whose even moments are the Fuss-Catalan numbers. There has been a flurry of activity surrounding random tensors in the past few years \citep{goulart2022random, au2023spectral, bonnin2024universality,seddik2024random}. Among these, the most relevant to our setting are \cite{goulart2022random, au2023spectral, bonnin2024universality}. \cite{goulart2022random} considered pure contractions of a variant of the GOTE with $r$ fixed and showed that for any contraction direction $\bw \in \bbS^{n - 1}$, one gets a semi-circle law as the LSD of the contracted matrix. \cite{au2023spectral} obtained the same result for general non-Gaussian entries. In fact, they also established joint convergence of a family of mixed contractions in the sense of free probability. The above mentioned semi-circle LSD result also follows from the work of \cite{bonnin2024universality} who studied the more general tensor-valued contractions of the GOTE, obtaining the same distribution as \cite{gurau2020generalization}.
In the recent work \cite{mukherjee2024spectra}, spectral properties of adjacency matrices of random Erd\H{o}s-R\'{e}nyi hypergraphs were studied. These matrices are pure contractions of the underlying adjacency tensors along the direction $\bw = \frac{\bone}{\sqrt{n}}$.

\subsection{Notational conventions}
For functions $f, g : \bbN \to \bbR$, we write (i) $f(n) = O(g(n))$, if there exist positive constants $n_0$ and $C$ such that $|f(n)| \le C |g(n)|$ for all $n \ge n_0$; (ii) $f(n) = o(g(n))$ or $f(n) \ll g(n)$ if $\lim_{n \to \infty} \frac{f(n)}{g(n)} = 0$ (we also write $f(n) \gg g(n)$ if $g(n) \ll f(n)$); (iii) $f(n) = \Theta(g(n))$ or $f(n) \asymp g(n)$ if $f(n) = O(g(n))$ and $g(n) = O(f(n))$; (iv) $f(n) \sim g(n)$ if $\lim_{n \to \infty} \frac{f(n)}{g(n)} = 1$; (v) $f(n) \gg g(n)$ if $\lim_{n \to \infty} \frac{g(n)}{f(n)} = 0$.

For a sequence of random variables $\{X_n\}_{n \ge 1}$, we write $X_n = O_P(1)$ if for any $\epsilon > 0$, there exists $K_{\epsilon} > 0$ such that $\sup_n \bbP(|X_n| > K_{\epsilon}) \le \epsilon$. For two sequence of random variables $\{X_n\}_{n \ge 1}$ and $\{Y_n\}_{n \ge 1}$ we write $X_n = O(Y_n)$ to mean $X_n = Z_n Y_n $ with $Z_n = O_P(1)$. 

While the contraction direction depends on the dimensionality parameter $n$ or the order parameter $r$, we do not display them as subscripts/superscripts to avoid notational clutter. For example, in our asymptotic results, when we state an asymptotic result about $\cG \cdot \bw^{\otimes (r - 2)}$, we actually mean that there are an underlying sequence of contraction directions $(\bw_{r_n, n})_{n \ge 1}$ and a sequence of random tensors $(\cG_{r_n, n})_{n \ge 1}$, and that the result is about the sequence of contractions $(\cG_{r_n, n} \cdot \bw_{r_n, n}^{\otimes (r - 2)})_{n \ge 1}$.

The rest of this paper is organised as follows. We give precise statements of our results in Section~\ref{sec:main}. Section~\ref{sec:proofs} collects all the proofs. Some auxiliary lemmas are collected in the appendix. 

\section{Main results}\label{sec:main}

\subsection{Pure contractions}\label{sec:pure}
We first state a result on the LSD of pure contractions.
\begin{proposition}\label{prop:lsd_pure} 
    Let $\bw \in \bbS^{n - 1}$ and $M_n = \cG \cdot \bw^{\otimes (r - 2)}$. Then $\mubar_{\frac{1}{\theta\sqrt{n}}M_n} \convd \nu_{\sc, 1}$. In fact, for $r \ll n^2$, one has $\mu_{\frac{1}{\theta\sqrt{n}}M_n} \convd \nu_{\sc, 1}$
    in probability. The convergence is almost sure for $r \ll \frac{n^2}{\log n}$.
\end{proposition}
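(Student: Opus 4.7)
The plan is to prove convergence of the EESD by the moment method and then upgrade to convergence of the ESD via a variance bound and Gaussian concentration. Since $\nu_{\sc,1}$ is compactly supported, its moments determine it (Carleman), so it suffices to show, for every integer $k\ge 1$,
\[
\bbE\!\left[\tfrac{1}{n}\Tr\!\left(\tfrac{M_n}{\theta\sqrt{n}}\right)^{\!k}\right]\longrightarrow \int x^k\,d\nu_{\sc,1}(x)=\begin{cases}C_{k/2}&k\text{ even},\\0&k\text{ odd},\end{cases}
\]
where I anticipate $\theta^2=1/(r-1)$. Expanding the trace and applying Wick's theorem to the centred Gaussian tensor $\cG$,
\[
\bbE[\Tr M_n^k]=\sum_{\bi\in[n]^k}\sum_{\ba^{(1)},\ldots,\ba^{(k)}\in[n]^{r-2}}\sum_{\pi\in\cP_2(k)}\prod_{(s,t)\in\pi}\bbE\!\left[\cG_{i_s i_{s+1}\ba^{(s)}}\cG_{i_t i_{t+1}\ba^{(t)}}\right]\prod_{s=1}^k\prod_{\ell=1}^{r-2} w_{a^{(s)}_\ell}
\]
(indices mod $k$; $\cP_2(k)$ is the set of pair partitions of $[k]$, empty for odd $k$). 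The covariance prefactor equals $\tfrac{r}{N([i_s i_{s+1}\ba^{(s)}])}$ times the indicator that the two $r$-tuples represent the same multiset, which already kills the odd moments.

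The heart of the argument is extracting the Catalan contribution to the even moments. I would first restrict to the ``generic'' configurations in which $\{i_s,i_{s+1}\}=\{i_t,i_{t+1}\}$ as unordered pairs for every $(s,t)\in\pi$, and the entries of each $\ba^{(s)}$ are distinct and disjoint from $\{i_s,i_{s+1}\}$. In this regime the multiset indicator decouples into the outer Wigner constraint $\{i_s,i_{s+1}\}=\{i_t,i_{t+1}\}$ and the inner constraint $[\ba^{(s)}]=[\ba^{(t)}]$. The outer constraint, imposed jointly, forces $\pi$ to be a non-crossing pair partition of the cyclic edges $(i_s,i_{s+1})_{s=1}^k$ (Wigner's classical count), contributing $C_{k/2}$ admissible $\pi$ and $\sim n^{k/2+1}$ labellings of the $k/2+1$ vertices of the resulting tree. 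The inner sum factorises pair-by-pair and each pair contributes
\[
\sum_{\ba,\ba'\in[n]^{r-2}}\frac{r}{N([i_si_{s+1}\ba])}\,\ind\!\bigl([\ba]=[\ba']\bigr)\prod_{\ell=1}^{r-2}w_{a_\ell}w_{a'_\ell}=\frac{r(r-2)!}{r!}\|\bw\|_2^{2(r-2)}=\frac{1}{r-1}=\theta^2,
\]
using $\|\bw\|_2=1$ and $N=r!$ in the generic case. The product of $\theta^2$ over the $k/2$ pairs gives $\theta^k$, yielding total $C_{k/2}n^{k/2+1}\theta^k=C_{k/2}\,n(\theta\sqrt n)^k$, which matches $C_{k/2}$ after normalisation. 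Non-generic configurations---crossings in $\pi$, coincidences among the $i_s$'s, or collisions of some $a^{(s)}_\ell$ with an $i$-index---each drop the dominant index count by at least one factor of $n$, while the $\bw$-sums stay bounded by $\|\bw\|_2^{2(r-2)}=1$. A careful accounting gives their net contribution as $O(1/n)$ relative to the main term, establishing the EESD limit.

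For the ESD statements, I would bound $\Var(m_k)$ with $m_k:=\tfrac{1}{n}\Tr(M_n/\theta\sqrt n)^k$. Since $m_k$ is a degree-$k$ polynomial in jointly Gaussian variables, a second Wick expansion (on four copies of $M_n$) reduces this to a sum over pair partitions of $[2k]$ with at least one cross-copy pair. Each such cross-pair costs a factor of $n$ in the index count, so by the same bookkeeping as above one expects $\Var(m_k)=O(r/n^2)$, giving $m_k\to C_{k/2}$ in probability under $r\ll n^2$. For almost-sure convergence one invokes Gaussian concentration (or higher-order hypercontractivity for polynomials in Gaussians) applied to $\cG\mapsto m_k$; noting that $\cG\mapsto M_n$ is contractive in Frobenius norm (Cauchy--Schwarz plus $\|\bw\|_2=1$) and that $\Tr(\cdot)^k$ is locally Lipschitz near $M_n$, one obtains sub-Gaussian tails sharp enough that Borel--Cantelli closes under $r\ll n^2/\log n$. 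The principal obstacle I foresee is the combinatorial bookkeeping of paragraph two: one must simultaneously track (i) Wigner's outer edge-pairing constraint, (ii) the multiset symmetry of $\cG$'s covariance, and (iii) the inner $\bw$-contraction, the key conceptual point being that $\|\bw\|_2=1$ collapses the inner degrees of freedom to the single scalar $1/(r-1)$ exactly.
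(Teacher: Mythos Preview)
Your approach via the moment method is genuinely different from the paper's, and it has a real gap when $r$ grows with $n$.

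The paper's proof is almost immediate: Lemma~\ref{lem:rank-2-perturbation} gives the distributional identity $M_n \overset{d}{=} \alpha U\,\bw\bw^\top + \beta(\bV\bw^\top+\bw\bV^\top) + \theta Z$ with $Z\sim\GOE(n)$, so $\tfrac{1}{\theta\sqrt n}M_n$ is a rank-$2$ perturbation of $\tfrac{1}{\sqrt n}Z$. The rank inequality (Lemma~\ref{lem:rank_ineq}) then forces $\mubar_{\frac{1}{\theta\sqrt n}M_n}\convd\nu_{\sc,1}$ for \emph{every} regime of $r$; the ESD statements follow from the Gaussian-Lipschitz concentration of Lemma~\ref{lem:concentration}.

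Your claim that the non-generic configurations contribute $O(1/n)$ relative to the main term is false once $r$ grows. Already for $k=2$, using the exact variances in Lemma~\ref{lem:cov-structure-pure},
\[
\bbE\!\left[\tfrac{1}{n}\Tr\!\left(\tfrac{M_n}{\theta\sqrt n}\right)^{\!2}\right]
=\frac{1}{\theta^2 n^2}\sum_{i,j}\Var(M_{ij})
=1+\frac{1}{n}+\frac{2(r-2)(n+1)}{n^2}+\frac{(r-2)(r-3)}{n^2}.
\]
If $r/n\to c>0$ this converges to $(1+c)^2$, not to $1$; if $r\gg n$ it diverges. The terms you dismissed as ``collisions of some $a^{(s)}_\ell$ with an $i$-index'' are exactly the $\beta^2$- and $\alpha^2$-terms above, and their combinatorial multiplicity scales with $r$, not $O(1)$. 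So the moments of the EESD do \emph{not} converge to the semicircle moments in general.

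This is consistent with weak convergence of $\mubar$ to $\nu_{\sc,1}$: the excess second moment is carried by the two outlier eigenvalues coming from the rank-$2$ part (Theorem~\ref{thm:edge-limits}), which escape the bulk and do not affect integrals against bounded test functions. But the moment method as you set it up cannot see this; it would only recover the result under the extra hypothesis $r=o(n)$. The paper's rank-$2$ decomposition is precisely what isolates the outliers from the bulk and makes the argument uniform in $r$. For the ESD upgrade, your sketched variance/concentration route is fine in spirit and matches the paper's Lemma~\ref{lem:concentration}, though the latter obtains the rate $\exp(-n^2\epsilon^2/2r)$ directly from Lipschitzness rather than via moment-variance bounds.
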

As mentioned in the introduction, the above result has previously been derived in \cite{goulart2022random, au2023spectral, bonnin2024universality} for $r$ fixed. Unlike those papers, we may allow $r$ to grow with $n$.

Now we look at the behaviour of the edge eigenvalues. Interestingly, outlier eigenvalues emerge for $r \ge 4$. Define
\[
    \varpi_r := \dfrac{1}{\sqrt{r - 1}} \bigg(\sqrt{r - 2} + \dfrac{1}{\sqrt{r - 2}}\bigg).
\]
We note that $\frac{2}{\sqrt{r - 1}}$ is the right end-point of the support of the LSD of $n^{-1/2}M_n$ and $\varpi_r > \frac{2}{\sqrt{r - 1}}$ for $r \ge 4$. This emergence of outlier eigenvalues for $r \ge 4$ was previously noted in \cite{mukherjee2024spectra} in the context of adjacency matrices of Erd\H{o}s-R\'{e}nyi hypergraphs.
\begin{theorem}[Edge eigenvalues -- first order results]\label{thm:edge-limits}
For any fixed $r \ge 3$, we have
    \begin{equation}\label{eq:edge_prob_conv_r_fixed}
        \bigg(\frac{\lambda_1(M_n)}{\sqrt{n}}, \frac{\lambda_n(M_n)}{\sqrt{n}}\bigg) \convp (\varpi_r, -\varpi_r).
    \end{equation}

If $1 \ll r \ll n$, we have
    \begin{equation}\label{eq:edge_prob_conv_r<<n}
        \bigg(\frac{\lambda_1(M_n)}{\sqrt{n}}, \frac{\lambda_n(M_n)}{\sqrt{n}}\bigg) \convp (1, -1).
    \end{equation}
If $\frac{r}{n} \to c \in (0, \infty)$,
    \begin{equation}\label{eq:edge_dist_conv_r=n}
        \bigg(\frac{\lambda_1(M_n)}{\sqrt{n}}, \frac{\lambda_n(M_n)}{\sqrt{n}}\bigg) \convd \bigg(\xi, -\frac{1}{\xi}\bigg),
    \end{equation}
    where $\xi = \frac{\sqrt{c}\zeta + \sqrt{c\zeta^2 + 4}}{2}$ with $\zeta \sim \cN(0, 1)$.
    Finally, when $\frac{r}{n} \to \infty$,
    \begin{equation}\label{eq:edge_dist_conv_r>>n}
        \bigg(\frac{\lambda_1(M_n)}{\sqrt{r}}, \frac{\lambda_n(M_n)}{\sqrt{r}}\bigg) \convd \big(\zeta_+, -\zeta_-\big),
    \end{equation}
    where $\zeta \sim \cN(0, 1)$, and $x_+ = \max\{x, 0\}, x_- = \max\{-x, 0\}$ denote respectively the positive and the negative parts of $x$.
\end{theorem}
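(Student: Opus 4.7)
The plan is to reduce the problem, via the orthogonal invariance of $\GOTE(r,n)$, to a random matrix with an explicit block structure, and then read off the extreme eigenvalues through a Schur complement together with quadratic-form concentration. All four regimes then fall under a single template, parametrised by how the three blocks scale.

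\textbf{Reduction.} Since the GOTE density depends on $\cG$ only through $\|\cG\|_{\mathrm{F}}$, which is invariant under mode-wise orthogonal transformations of $\bbR^n$, we may assume $\bw = \be_1$. Then $M_{n,ij} = \cG_{i,j,1,\ldots,1}$, and grouping tensor indices by multi-set yields the block decomposition
\[
    M_n = \begin{pmatrix} X_{11} & \bg^{T} \\ \bg & W \end{pmatrix},
\]
where $X_{11} := \cG_{1,\ldots,1} \sim \cN(0,r)$, $\bg \sim \cN(\bzero, I_{n-1})$, and $W$ is an $(n-1)\times(n-1)$ centred Gaussian symmetric matrix with off-diagonal variance $1/(r-1)$ and diagonal variance $2/(r-1)$. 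The three blocks are \emph{mutually independent}, since they are indexed by multi-sets that differ in the multiplicity of the index $1$. In particular, $W$ is $(1/\sqrt{r-1})$ times a standard GOE.

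\textbf{Schur complement and outlier equation.} For $\lambda \notin \spec(W)$, $\lambda$ is an eigenvalue of $M_n$ iff $\lambda - X_{11} = \bg^{T}(\lambda I - W)^{-1}\bg$. Since $\bg \perp W$, the Hanson--Wright inequality gives $\bg^{T}(\lambda I - W)^{-1}\bg \approx \Tr((\lambda I - W)^{-1}) = (n-1)\tilde m_n(\lambda)$, where $\tilde m_n$ is the Stieltjes transform of the ESD of $W$. Cauchy interlacing for the border $(X_{11}, \bg; \bg^{T}, W)$ further implies that $\lambda_1(M_n)$ is the unique root of the above equation above $\lambda_1(W)$, and $\lambda_n(M_n)$ the unique root below $\lambda_{n-1}(W)$.

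\textbf{Case analysis.} \emph{Regimes $r$ fixed and $1 \ll r \ll n$:} Setting $\mu = \lambda/\sqrt{n}$ and using $X_{11}/\sqrt{n} \convp 0$, the limiting equation is $\mu = m(\mu)$, where $m$ is the Stieltjes transform of $\nu_{\sc, 1/(r-1)}$. A short algebraic manipulation gives $\mu^{2} = (r-1)/(r-2) = \varpi_r^{2}$. For $r \ge 4$ this root lies strictly outside $[-2/\sqrt{r-1},2/\sqrt{r-1}]$, producing a genuine outlier at $\varpi_r$; for $r = 3$ the root coincides with the bulk edge, so $\lambda_1/\sqrt{n}$ sticks to $\varpi_3$ by combining the interlacing lower bound $\lambda_1(M_n) \ge \lambda_1(W)$ (whose scaled version converges to $2/\sqrt{r-1}$ by Proposition~\ref{prop:lsd_pure}) with the absence of any super-edge solution of $\mu = m(\mu)$. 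Letting $r \to \infty$ yields $\varpi_r \to 1$, giving \eqref{eq:edge_prob_conv_r<<n}. \emph{Regimes $r/n \to c$ and $r \gg n$:} Here the bulk of $W/s_n$ (with $s_n = \sqrt{n}$ or $\sqrt{r}$ respectively) collapses to $\{0\}$, so $\tilde m_n(\mu) \to 1/\mu$ and the Schur complement equation reduces to the quadratic
\[
    \mu^{2} - (X_{11}/s_n)\mu - n/s_n^{2} = 0.
\]
Its positive and negative roots are precisely $\lambda_1/s_n$ and $\lambda_n/s_n$. Substituting $X_{11}/\sqrt{n} \convd \sqrt{c}\zeta$ delivers $(\xi, -1/\xi)$ in \eqref{eq:edge_dist_conv_r=n}, while $X_{11}/\sqrt{r} \convd \zeta$ together with $n/s_n^{2} \to 0$ delivers $(\zeta_{+}, -\zeta_{-})$ in \eqref{eq:edge_dist_conv_r>>n}. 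Joint convergence is automatic, since both extreme eigenvalues are functions of the \emph{same} Gaussian $X_{11}$.

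\textbf{Main difficulty.} The principal technical point is upgrading the pointwise Hanson--Wright estimate to a uniform deterministic equivalent for $\bg^{T}(\lambda I - W)^{-1}\bg$, valid on a window containing the putative outlier and stable as $\lambda$ approaches the spectral edge of $W$, where the resolvent norm blows up. This is most delicate in the fixed-$r$ case near $r = 3$ (where $\varpi_r$ approaches the edge) and in the $r/n \to c$ regime for $\zeta$ large and negative (where $\xi$ becomes small). Once that uniform control is in hand, the identification of $\lambda_1$ and $\lambda_n$ as the unique roots of the corresponding limiting equation, together with Proposition~\ref{prop:lsd_pure} for the bulk, closes the argument in all four regimes.
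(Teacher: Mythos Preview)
Your approach is correct and takes a genuinely different route from the paper's. The paper does not specialise $\bw$; instead it proves, for arbitrary $\bw\in\bbS^{n-1}$, the distributional identity $M_n \overset{d}{=} P_n + \theta Z$ with $P_n = \alpha U\bw\bw^\top + \beta(\bV\bw^\top + \bw\bV^\top)$ a random rank-$2$ matrix and $Z$ an independent $\GOE$. For fixed $r$ it computes the two non-trivial eigenvalues of $P_n$, freezes them at their a.s.\ limits $\pm\beta\sqrt{n}$, and invokes the BBP theorem of Benaych-Georges--Nadakuditi; for $r\to\infty$ it simply uses Weyl's inequality ($\|\theta Z\|_{\op} = O_P(\sqrt{n/r})$) to transfer the edge of $M_n$ to that of $P_n$, whose eigenvalues are then read off from the explicit formula. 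Your route --- reduce to $\bw=\be_1$ by orthogonal invariance, obtain a bordered matrix, and solve the Schur-complement equation via Hanson--Wright and interlacing --- is more self-contained (it rederives the BBP phase transition from scratch rather than citing it) and puts all four regimes under a single template parametrised by the scaling of $X_{11}$ and of the bulk of $W$. What the paper's route buys is that the rank-$2$ representation holds for every $\bw$ and is re-used throughout the paper: for the eigenvector overlaps, for the edge fluctuations via Knowles--Yin, and for the directional spectral measure. Your bordered-matrix reduction is tied to the special direction $\be_1$ and would need to be translated back to recover those later results. Two minor points: your appeal to Proposition~\ref{prop:lsd_pure} for the interlacing lower bound in the $r=3$ case is slightly off --- what you actually need is $\lambda_1(W)/\sqrt{n}\to 2/\sqrt{r-1}$, which is a standard GOE edge fact; and the sentence ``letting $r\to\infty$ yields $\varpi_r\to 1$'' for the regime $1\ll r\ll n$ is a shortcut that should really be ``the root of $\mu = g_r(\mu)$ is $\mu^2 = (r-1)/(r-2)\to 1$ along the sequence $r=r_n$'', but this is immediate within your template.
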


The next theorem provides fluctuations of the edge eigenvalues for fixed $r \ge 4$ and the regime $\sqrt{n} \ll r \ll n$. Fluctuation results for $r = 3$ (which we believe to be Tracy-Widom) and the regime $1 \ll r \ll \sqrt{n}$ (which we believe to be Gaussian) are yet to be worked out. For obtaining these fluctuation results, we use Theorem 2.11 of \cite{knowles2014outliers} under the hood. 
\begin{theorem}[Edge eigenvalues -- second order results]\label{thm:edge-fluctuations}
    If $r \ge 4$ is fixed, then we have Gaussian fluctuations:
    \begin{equation}\label{eq:edge_fluc_r_fixed}
        \sqrt{n}\bigg(\frac{\lambda_1(M_n)}{\sqrt{n}} - \varpi_r, \frac{\lambda_1(M_n)}{\sqrt{n}} + \varpi_r\bigg) \xrightarrow{d} \cN_2\bigg((0,0), 
        \frac{r-3}{4(r-2)(r-1)}
        \begin{pmatrix}
           r^2 - 1 & r^2-9\\
           r^2 - 9 & r^2-1
        \end{pmatrix}\bigg).
    \end{equation}
    If $\sqrt n \ll r \ll n$, then the fluctuation is again Gaussian, albeit with a different scaling and a singular covariance matrix:
\begin{equation}\label{eq:edge_fluc_sqrt n<<r<<n}
    \sqrt{\frac{ n}{ r}} \bigg(\frac{\lambda_1(M_n)}{\sqrt n} - 1, \frac{\lambda_n(M_n)}{\sqrt n} + 1\bigg) \xrightarrow{d} \cN_2\bigg((0,0), 
        \frac{1}{4}
        \begin{pmatrix}
           1 & 1\\
           1 & 1
        \end{pmatrix}\bigg).
\end{equation}
\end{theorem}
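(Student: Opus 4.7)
My plan is to expose a Baik--Ben Arous--P\'ech\'e-type rank-2 spike structure in $M_n$ via a Schur-complement decomposition along the direction $\bw$, and then invoke \cite[Theorem~2.11]{knowles2014outliers}. Concretely, set $Q = I - \bw\bw^T$ and write
\[
M_n \;=\; \alpha\,\bw\bw^T \,+\, \bw\mathbf{f}^T + \mathbf{f}\bw^T \,+\, B,
\]
where $\alpha = \bw^T M_n \bw = \cG\cdot\bw^{\otimes r}$, $\mathbf{f} = Q(\cG\cdot\bw^{\otimes(r-1)}) \in \bw^\perp$, and $B = QM_nQ$ acts on $\bw^\perp$. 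The orthogonal decomposition $\fS_{r,n} = \mathrm{span}(\bw^{\otimes r}) \oplus \{\mathbf{u}\otimes_s \bw^{\otimes(r-1)} : \mathbf{u}\perp\bw\} \oplus (\cdot)^\perp$ splits the Gaussian tensor as $\cG = \cG_0 + \cG_1 + \cG_\perp$ with $\cG_0, \cG_1, \cG_\perp$ mutually independent. A direct calculation shows that $\alpha$ is a function of $\cG_0$ alone, $\mathbf{f}$ of $\cG_1$ alone, and $B$ of $\cG_\perp$ alone, making $(\alpha, \mathbf{f}, B)$ jointly independent. Gaussian moment computations then give $\alpha \sim \cN(0, r)$, $\sqrt n(\|\mathbf{f}\|^2/n - 1) \convd \cN(0, 2)$, and the bulk of $B/\sqrt n$ is the semicircle $\nu_{\sc, \sigma}$ with $\sigma^2 = 1/(r-1)$ (by the proof of Proposition~\ref{prop:lsd_pure}).

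Writing $\tilde\lambda = \lambda/\sqrt n$, $\tilde B = B/\sqrt n$, and letting $m_{\sc}$ denote the Stieltjes transform of $\nu_{\sc,\sigma}$, the Schur-complement identity for any outlier eigenvalue outside $[-2\sigma, 2\sigma]$ reads
\[
\tilde\lambda \;=\; \tfrac{\alpha}{\sqrt n} \,+\, \tfrac{1}{n}\mathbf{f}^T\bigl(\tilde\lambda I - \tilde B\bigr)^{-1}\mathbf{f} \;=\; \tfrac{\alpha}{\sqrt n} \,-\, \tfrac{\|\mathbf{f}\|^2}{n}\,m_{\sc}(\tilde\lambda) \,+\, \mathrm{err},
\]
where $\mathrm{err} = O_P(n^{-1/2})$ is the Gaussian residual controlled by \cite[Theorem~2.11]{knowles2014outliers} applied conditionally on $(\alpha, \mathbf{f})$ (legitimate by the independence established above). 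The leading-order equation $\tilde\lambda = -m_{\sc}(\tilde\lambda)$ gives $\tilde\lambda^2 = 1/(1-\sigma^2) = (r-1)/(r-2)$, so $\tilde\lambda = \varpi_r$; this matches the BBP prediction $\theta_r + \sigma^2/\theta_r$ with effective spike strength $\theta_r = \sqrt{1 - \sigma^2}$. Expanding to next order and using $m'_{\sc}(\varpi_r) = (r-1)/(r-3)$ yields
\[
\sqrt n\bigl(\tilde\lambda_1 - \varpi_r\bigr) \;=\; \tfrac{r-3}{2(r-2)}\Bigl[\alpha \,+\, \varpi_r\sqrt n \bigl(\tfrac{\|\mathbf{f}\|^2}{n} - 1\bigr) \,+\, \sqrt n\,\mathrm{err}\Bigr] + o_P(1),
\]
together with an analogous expression at $-\varpi_r$ for $\tilde\lambda_n$: same prefactor and $\alpha$-contribution, opposite sign on the $\|\mathbf{f}\|^2$-term.

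Combining the three asymptotically independent Gaussian contributions -- from $\alpha$, $\|\mathbf{f}\|^2$, and $\mathrm{err}$ -- with the residual variance supplied by \cite[Theorem~2.11]{knowles2014outliers}, and using the above sign structure for the top-versus-bottom comparison, produces the joint covariance matrix in \eqref{eq:edge_fluc_r_fixed}. In the regime $\sqrt n \ll r \ll n$, $\sigma \to 0$ so the bulk collapses, $\varpi_r \to 1$, and both the $\|\mathbf{f}\|^2$- and $\mathrm{err}$-contributions are $O(n^{-1/2})$, dominated by $\alpha/\sqrt n \sim \sqrt{r/n}$; since $\alpha$ enters $\lambda_1$ and $\lambda_n$ with the same sign and the prefactor converges to $1/2$, rescaling by $\sqrt{n/r}$ yields the degenerate bivariate limit \eqref{eq:edge_fluc_sqrt n<<r<<n}. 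The most delicate step is matching the Knowles--Yin residual variance -- together with its top/bottom covariance, which turns out to be non-vanishing -- to the specific coefficients in \eqref{eq:edge_fluc_r_fixed}; this requires a conditional Wigner-resolvent computation within the framework of \cite{knowles2014outliers}.
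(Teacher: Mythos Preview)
Your Schur-complement/bordering approach is essentially correct and leads to the same answer, but it differs from the paper's route in a way worth noting. The paper exploits Lemma~\ref{lem:rank-2-perturbation} to write $M_n \overset{d}{=} P_n + \theta Z$ with $P_n$ a \emph{random} rank-2 matrix \emph{independent} of the full $n\times n$ GOE $Z$; conditioning on $(U,\bV)$ then places one exactly in the setting of \cite[Theorem~2.11]{knowles2014outliers}, so that result can be invoked as a black box. The conditional Knowles--Yin residuals for the top and bottom outliers are independent, and the entire cross-covariance in \eqref{eq:edge_fluc_r_fixed} comes from the joint fluctuation of $(\lambda_1(P_n),\lambda_n(P_n))$ (Lemma~\ref{lem:P_n_fluctuation}) pushed through the map $d\mapsto d+1/d$ via the delta method. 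Your decomposition instead yields a bordered matrix $\bigl(\begin{smallmatrix}\alpha & \mathbf{f}^\top\\ \mathbf{f} & B\end{smallmatrix}\bigr)$ with $B$ an $(n-1)$-dimensional GOE; this is \emph{not} a finite-rank perturbation of a Wigner matrix on $\bbR^n$, so \cite[Theorem~2.11]{knowles2014outliers} does not apply directly after conditioning on $(\alpha,\mathbf{f})$. What you actually need is the isotropic CLT for $\hat{\mathbf f}^\top(\tilde\lambda I-\tilde B)^{-1}\hat{\mathbf f}$ at two deterministic spectral parameters $\pm\varpi_r$---an \emph{ingredient} of the Knowles--Yin proof rather than its statement---together with an implicit-function argument to pass from the fixed spectral parameter to the random outlier. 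In your decomposition the residuals at $\pm\varpi_r$ are genuinely correlated (as you suspect), so the bookkeeping of the three contributions is organized differently from the paper's, though both must sum to the same covariance matrix. The paper's route is shorter precisely because Lemma~\ref{lem:rank-2-perturbation} packages the problem so that the heavy lifting is done entirely inside a single citation; your route trades that for a more hands-on resolvent computation. For the regime $\sqrt n\ll r\ll n$, both arguments reduce to the same observation: $\theta\|Z\|_{\op}$ is negligible at scale $\sqrt r$, so the fluctuation is driven solely by the $\cN(0,r)$ variable $\alpha$ (equivalently, by the spike eigenvalues of $P_n$), which enters $\lambda_1$ and $\lambda_n$ with the same sign.
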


Our next result shows that for $r \ge 4$, there is non-trivial information about the contraction direction $\bw$ in the edge eigenvectors. In fact, in each of the regimes $1 \ll r \ll n$ and $r \gg n$, one may identify two explicit (data-dependent) vectors, one of which is perfectly aligned with $\bw$.
\begin{theorem}\label{thm:overlap-with-w}[Overlap of edge-eigenvectors with $\bw$]
Let $r \ge 4$. Let $\bs_1$ and $\bs_n$ denote the largest and the smallest (normalised) eigenvectors of $M$ (unique up to signs). 
\begin{enumerate}
    \item [(i)] If $r$ is fixed, then $\dist\big(\bw, \spn(\bs_1, \bs_2)\big) \convp \frac{\theta}{\beta}$.
    \item [(ii)] If $r \to \infty$, then $\dist\big(\bw, \spn(\bs_1, \bs_2)\big) \convp 0$.
\end{enumerate}
In fact, let $\bt_1 = \frac{\bs_1 + \bs_n}{\sqrt{2}}$ and $\bt_2 = \frac{\bs_1 - \bs_n}{\sqrt{2}}$, and define
\begin{align*}
    (\delta_1, \delta_n) &:= \bigg(\max\bigg\{|\bw^\top \bs_1|, |\bw^\top \bs_2|\bigg\}, \min\bigg\{|\bw^\top \bs_1|, |\bw^\top \bs_2|\bigg\} \bigg); \\
    (\tilde{\delta}_1, \tilde{\delta}_n) &:= \bigg(\max\bigg\{|\bw^\top \bt_1|, |\bw^\top \bt_2|\bigg\}, \min\bigg\{|\bw^\top \bt_1|, |\bw^\top \bt_2|\bigg\} \bigg).
\end{align*}
Then, for any fixed $r \ge 4$,
\[
     (\delta_1, \delta_n) \convp \frac{1}{\sqrt{2}}\sqrt{1 - \frac{\theta^2}{\beta^2}}(1, 1), \qquad
     (\tilde{\delta}_1, \tilde{\delta}_n) \convp \left(\sqrt{1 - \frac{\theta^2}{\beta^2}}, 0\right).
\]

If $1 \ll r \ll n$, then
\[
    (\delta_1, \delta_n) \convp \frac{1}{\sqrt{2}}(1, 1), \qquad
    (\tilde{\delta}_1, \tilde{\delta}_n) \convp (1, 0).
\]

If $\frac{r}{n} \to c \in (0, \infty)$, then
\[
    (\delta_1, \delta_n) \convd \frac{1}{\sqrt{2}}\left(\frac{\xi}{\sqrt{\xi^2 + 1}}, \frac{1}{\sqrt{\xi^2 + 1}}\right), \quad
    (\tilde{\delta}_1, \tilde{\delta}_n) \convd \frac{1}{\sqrt{2}}\left(\frac{\xi + 1}{\sqrt{\xi^2 + 1}}, \frac{\xi - 1}{\sqrt{\xi^2 + 1}}\right),
\]
where $\xi = \frac{\sqrt{c}\zeta + \sqrt{c \zeta^2 + 4}}{2} \ge 1$ and $\zeta \sim \cN(0, 1)$. %

Finally, if $r \gg n$, then
\[
    (\delta_1, \delta_n) \convp (1, 0), \qquad
    (\tilde{\delta}_1, \tilde{\delta}_n) \convp \frac{1}{\sqrt{2}}(1, 1).
\]
\end{theorem}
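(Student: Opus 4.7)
The plan is to exploit the orthogonal invariance of $\GOTE(r, n)$ to reduce to the case $\bw = \be_1$, decompose $M_n$ as a Wigner matrix plus an independent random rank-$2$ perturbation that can be diagonalised by hand on a $2$-dimensional subspace, and then apply BBP-type outlier eigenvector results (in the spirit of Theorem 2.11 of \cite{knowles2014outliers}, already used for Theorem~\ref{thm:edge-fluctuations}) to read off the overlaps of $\bs_1$ and $\bs_n$ with $\bw$.

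In the basis $\bw = \be_1$, the entries $M_{n, ij} = \cG_{ij 1 \cdots 1}$ belong to distinct equivalence classes of symmetric tensor indices, and hence are mutually independent centered Gaussians with variances $\tfrac{1}{r-1}$, $\tfrac{2}{r-1}$, $1$, and $r$ at a generic off-diagonal entry, a generic diagonal entry, a first row/column off-diagonal entry, and the $(1,1)$ entry, respectively. Writing $M_n = W_n + D_n$, where $W_n$ is the principal $(n-1) \times (n-1)$ Wigner block padded by zeros in the first row/column and
\[
    D_n = \alpha \be_1 \be_1^\top + \bv \be_1^\top + \be_1 \bv^\top,
\]
with $\alpha = M_{n, 11} \sim \cN(0, r)$ independent of $\bv \in \be_1^\perp$ (whose nonzero coordinates are i.i.d.\ $\cN(0, 1)$), one checks that $W_n$ and $D_n$ depend on disjoint index-multiset classes and are therefore independent, while $W_n/\sqrt n$ has bulk $\nu_{\sc, 1/\sqrt{r-1}}$ by Proposition~\ref{prop:lsd_pure}.

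The spike $D_n$ is supported on the plane $\spn(\be_1, \bv/\|\bv\|)$, on which it acts as the $2 \times 2$ symmetric matrix $\bigl(\begin{smallmatrix} \alpha & \|\bv\| \\ \|\bv\| & 0 \end{smallmatrix}\bigr)$. I would solve this elementary eigenvalue problem and feed in the limits of $(\alpha/\sqrt n, \|\bv\|/\sqrt n)$ in each of the four regimes: (i) $r$ fixed: $(0, \beta)$, so the spike eigenvalues are $\pm \beta \sqrt n$ with eigenvectors $\bu_\pm = (\pm \be_1 + \bv/\|\bv\|)/\sqrt 2$; (ii) $1 \ll r \ll n$: the same, with $\beta \to 1$; (iii) $r/n \to c$: $(\sqrt c \zeta, 1)$, giving random spike eigenvalues $\xi \sqrt n$ and $-\sqrt n / \xi$ with eigenvectors proportional to $\xi \be_1 + \bv/\|\bv\|$ and $-\be_1 + \xi \bv/\|\bv\|$; (iv) $r \gg n$: after rescaling by $\sqrt r$, $D_n/\sqrt r \to \zeta\, \be_1 \be_1^\top$, a rank-one spike along $\bw$.

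A BBP-type outlier eigenvector result then yields $\bs_1 \approx \sqrt{1 - \theta^2/\lambda^2}\, \bu_+ + (\mathrm{bulk})$, where $\lambda$ is the corresponding spike eigenvalue and the bulk remainder lies in $\bu_+^\perp$ and is delocalised with negligible overlap with $\bw$; similarly for $\bs_n$ with $\bu_-$. Plugging in the explicit values of $|\bw^\top \bu_\pm|$ coming from (i)--(iv), and rotating to $\bt_i = (\bs_1 \pm \bs_n)/\sqrt 2$, gives the claimed limits for $(\delta_1, \delta_n)$ and $(\tilde\delta_1, \tilde\delta_n)$; the identity $\dist(\bw, \spn(\bs_1, \bs_n))^2 = 1 - (\bw^\top \bs_1)^2 - (\bw^\top \bs_n)^2 \to 1 - (1 - \theta^2/\beta^2) = \theta^2/\beta^2$ then yields the distance statement. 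The main technical obstacle will be regimes (iii)--(iv): there the spike strength is itself random and of the same order as (or larger than) the bulk edge, so a deterministic-spike BBP statement does not apply verbatim. I would handle this by conditioning on $(\alpha, \bv)$, invoking the isotropic local semi-circle law for $W_n$ to control the contribution of bulk eigenvectors to $\bs_1, \bs_n$ uniformly in the realisation, and then integrating over $(\alpha, \bv)$. A subsidiary check is needed in (iv) when $\zeta < 0$, where the rank-one spike produces a negative outlier and the roles of $\bs_1$ and $\bs_n$ swap.
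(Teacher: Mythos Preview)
Your overall strategy---reduce to $\bw = \be_1$, split off a rank-$2$ spike, and invoke BBP-type eigenvector results---is the same as the paper's. However, the specific decomposition you choose creates a genuine obstacle that the paper's version avoids.

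First, a computational slip: with your $D_n$, the vector $\bv$ has $n-1$ i.i.d.\ $\cN(0,1)$ entries, so $\|\bv\|/\sqrt n \to 1$, not $\beta$. The spike eigenvalues in regime (i) are therefore asymptotically $\pm\sqrt n$, not $\pm\beta\sqrt n$. If you then plug $\lambda = 1$ into the standard BBP overlap formula $\sqrt{1-\theta^2/\lambda^2}$ you get $\sqrt{1-\theta^2}$, which disagrees with the correct answer $\sqrt{1-\theta^2/\beta^2}$.

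The reason the standard formula fails is the more serious issue: your noise matrix $W_n$ has its first row and column identically zero, so $\be_1 = \bw$ is a \emph{deterministic} eigenvector of $W_n$ with eigenvalue $0$. This means $W_n$ is not orthogonally invariant and the isotropic local semicircle law fails precisely in the direction $\bw$: one has $\be_1^\top(W_n/\sqrt n - z)^{-1}\be_1 = -1/z$ rather than $s_{\sc,\theta}(z)$. Both the outlier location and the outlier--spike overlap are governed by a $2\times 2$ system mixing $-1/z$ and $s_{\sc,\theta}(z)$, not by the standard BBP expressions; in particular your claim that the ``bulk remainder is delocalised with negligible overlap with $\bw$'' cannot be justified by an off-the-shelf result. (If you carry out the $2\times 2$ resolvent computation by hand you do recover $\varpi_r$ and the correct overlaps, but this is substantially more work than you indicate.)

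The paper sidesteps all of this by using, instead of your exact decomposition, the distributional identity
\[
    M_n \overset{d}{=} \alpha U\,\bw\bw^\top + \beta(\bV\bw^\top + \bw\bV^\top) + \theta Z,
\]
where $Z$ is a \emph{full} $n\times n$ GOE independent of $(U,\bV)$. Now the noise is genuinely orthogonally invariant, the spike eigenvalues are $\pm\beta\sqrt n + o(\sqrt n)$, and Theorem~2.2 of Benaych-Georges--Nadakuditi applies directly after a Davis--Kahan step that replaces the random spike by one with deterministic eigenvalues $\pm\beta\sqrt n$. For $r\to\infty$ the paper simply uses Davis--Kahan with $\|\theta Z_n\|_{\op} = O_P(\sqrt{n/r}) \ll \lambda_1(P_n)$, bypassing BBP entirely.
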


We end this subsection with a result on the limiting directional spectral measures of $M_n$ for fixed $r$.
\begin{definition}[Directional Spectral Measure]
Let $\bx \in \bbS^{n-1}$. The spectral measure of $A$ in the direction $\bx$ is defined as
\[
    \mu_{A, \bx} := \sum_{i = 1}^n |\bx^\top \bu_i|^2 \delta_{\lambda_i},
\]
where $(\lambda_i, \bu_i)$ are the eigenvalue-eigenvector pairs of $A$.
\end{definition}

\begin{theorem}[Spectral measure in the direction $\bx$]\label{thm:spectral_measure}
Suppose $\bx \in \bbS^{n-1}$ and $\la \bx, \bw \ra = \rho$. Then, we have that
\[
    \mu_{\frac{M}{ \sqrt{n}}, \bx} \convd (1-\rho^2)\nu_{\sc, \theta^2} + \rho^2\mu_{r} + \frac{\rho^2}{2}\bigg(1 - \frac{\theta^2}{\beta^2}\bigg) \delta_{-\varpi_r} + \frac{\rho^2}{2}\bigg(1 - \frac{\theta^2}{\beta^2}\bigg) \delta_{\varpi_r},
\]
in probability, where
\[
    d\mu_{r}(x) = (1 + \frac{\theta^2}{\beta^2}) \frac{f_{\sc}(x/\theta)}{(\frac{\beta}{\theta} + \frac{\theta}{\beta})^2 - (x/\theta)^2} \, dx.
\]
\end{theorem}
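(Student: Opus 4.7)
The plan is to analyze the Stieltjes transform of each spectral measure. By the $O(n)$-invariance of the $\GOTE(r,n)$ law we may assume $\bw = \be_1$; in this case, as noted in the Introduction, $M$ admits the block decomposition
\[
    M = \begin{pmatrix} a & \bg^T \\ \bg & H \end{pmatrix},
\]
where $a \sim \cN(0,r)$, $\bg \sim \cN(0, I_{n-1})$, and $H$ is an independent $(n-1) \times (n-1)$ Wigner-type matrix with off-diagonal variance $\theta^2 = 1/(r-1)$ and diagonal variance $2\theta^2$, so that $H/\sqrt{n}$ has LSD $\nu_{\sc, \theta^2}$. Writing $\bx = \rho \be_1 + \sqrt{1-\rho^2}\,(0, \tilde{\bx}^\perp)^T$ with $\tilde{\bx}^\perp \in \bbS^{n-2}$, the spectral measure splits as
\[
    \mu_{M/\sqrt n, \bx} = \rho^2\, \mu_{M/\sqrt n, \be_1} + (1-\rho^2)\, \mu_{M/\sqrt n, (0,\tilde{\bx}^\perp)^T} + 2\rho\sqrt{1-\rho^2}\,\eta_n,
\]
where $\eta_n$ is the signed cross-measure $\sum_i (\bu_i)_1 (\bu_i^T(0,\tilde{\bx}^\perp)^T) \delta_{\lambda_i/\sqrt n}$. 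Each piece will be read off from the corresponding entry of the resolvent $G(z) := (M/\sqrt n - zI)^{-1}$ via Schur complements.

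The main calculation concerns the parallel component. The Schur formula gives
\[
    G(z)_{11} = \frac{1}{a/\sqrt n \;-\; z \;-\; \bg^T (H/\sqrt n - zI)^{-1} \bg / n}.
\]
For fixed $r$, $a/\sqrt n \to 0$ in probability, and Hanson--Wright concentration (applied conditionally on $H$, exploiting independence of the standard Gaussian $\bg$) combined with Wigner's law for $H/\sqrt n$ yields $\bg^T (H/\sqrt n - zI)^{-1} \bg / n \to m_\theta(z)$, the Stieltjes transform of $\nu_{\sc, \theta^2}$. Hence $G(z)_{11} \to S(z) := -1/(z + m_\theta(z))$. Using the Stieltjes relation $\theta^2 m_\theta^2 + z m_\theta + 1 = 0$, one finds that $z + m_\theta(z)$ has exactly two real zeros at $z = \pm\varpi_r$ (from $z^2 = 1/(1-\theta^2) = (r-1)/(r-2) = \varpi_r^2$). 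A direct residue calculation yields atoms of mass $(1-2\theta^2)/(2(1-\theta^2)) = \tfrac12(1-\theta^2/\beta^2)$ at each of $\pm\varpi_r$ (consistent with $\beta^2 = 1-\theta^2$), while Stieltjes inversion on the branch cut $(-2\theta, 2\theta)$ recovers the claimed density of $\mu_r$.

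For the orthogonal component, the lower-right $(n-1) \times (n-1)$ block of $G(z)$ is, by Schur, $(H/\sqrt n - zI - \bg \bg^T /(n(a/\sqrt n - z)))^{-1}$; this is a rank-one perturbation of $(H/\sqrt n - zI)^{-1}$ whose effect on any bilinear form is $O(1/n)$ by Sherman--Morrison. The isotropic local law for Wigner matrices, applied to $H/\sqrt n$ in the deterministic direction $\tilde{\bx}^\perp$, then gives $\mu_{M/\sqrt n, (0, \tilde{\bx}^\perp)^T} \to \nu_{\sc, \theta^2}$. For the cross term, Schur produces $\be_1^T G(z) (0, \tilde{\bx}^\perp)^T = -G(z)_{11}\cdot \bg^T (H/\sqrt n - zI)^{-1} \tilde{\bx}^\perp/\sqrt n$; since $\bg$ is independent of $(H, \tilde{\bx}^\perp)$ with $\|(H/\sqrt n - zI)^{-1}\tilde{\bx}^\perp\| = O_P(1)$ for $z$ off the spectrum, this bilinear form is Gaussian with bounded variance and hence $O_P(1)$, so after dividing by $\sqrt n$ the cross resolvent (and therefore $\eta_n$) vanishes.

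The principal obstacle is promoting the pointwise Stieltjes convergence to genuine weak convergence of random measures, including exact identification of the atom weights at $\pm\varpi_r$. For the bulk and the orthogonal part this is routine via standard local-law tools. For the atoms, one cross-checks the residue calculation against the squared-overlap information $(\bw^T \bs_1)^2, (\bw^T \bs_n)^2 \to \tfrac12(1-\theta^2/\beta^2)$ supplied by Theorem~\ref{thm:overlap-with-w}, and uses Theorem~\ref{thm:edge-limits} to pin the outlier locations at $\pm\varpi_r\sqrt n$, confirming that exactly the claimed mass sits at the edges and no bulk eigenvalue leaks mass to these atoms.
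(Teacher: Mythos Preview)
Your approach is correct and takes a genuinely different route from the paper. The paper never reduces to $\bw=\be_1$; instead it invokes the representation $M \overset{d}{=} \alpha U\bw\bw^\top + \beta(\bw\bV^\top+\bV\bw^\top)+\theta Z$ (Lemma~\ref{lem:rank-2-perturbation}), writes $M'=M/(\theta\sqrt{n})$ as a rank-$2$ perturbation of the GOE matrix $Z'=Z/\sqrt{n}$, and expands $\langle\bx,\bx\rangle_{M'}$ through the resolvent identity $(M'-z)^{-1}-(Z'-z)^{-1}=-(Z'-z)^{-1}(\text{rank-2 part})(M'-z)^{-1}$. Applying the isotropic local law for $Z'$ inside this identity produces a closed $2\times 2$ linear system for $\langle\bw,\bw\rangle_{M'}$ and $\langle\bV,\bw\rangle_{M'}$ (Lemma~\ref{lem:local_law_step_2}), whose solution gives $\langle\bw,\bw\rangle_{M'}\to s_{\sc}(z)/(1-\beta'^2 s_{\sc}^2(z))$; the orthogonal and cross pieces are then read off from the same identity. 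Your limit $-1/(z+m_\theta(z))$ is exactly this expression after undoing the $\theta$-rescaling (using $\beta^2=1-\theta^2$), and your residue $(1-2\theta^2)/(2(1-\theta^2))$ agrees with $\tfrac12(1-\theta^2/\beta^2)$.

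What each buys: your Schur-complement route is more elementary---it avoids the rank-$2$ resolvent expansion and the auxiliary system of equations, and the block independence at $\bw=\be_1$ makes the computation of $G_{11}$, the orthogonal quadratic form, and the cross term completely transparent via Hanson--Wright and a single Sherman--Morrison correction. The price is reliance on orthogonal invariance of $\GOTE$, which is special to the Gaussian setting. The paper's perturbative route works for arbitrary $\bw$ without invoking invariance, dovetails with the rank-$2$ machinery already used for the edge results (Theorems~\ref{thm:edge-limits}--\ref{thm:overlap-with-w}), and is closer in spirit to what one would need for non-Gaussian or mixed contractions where the block-independence miracle at $\bw=\be_1$ is unavailable. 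Your final paragraph on promoting pointwise Stieltjes convergence to weak convergence with the correct atoms is handled in the paper by a direct Stieltjes-inversion lemma (Lemma~\ref{lem:Stieltjes_inversion}); your cross-check via Theorems~\ref{thm:edge-limits} and~\ref{thm:overlap-with-w} is a legitimate alternative.
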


\subsection{Mixed contractions}
In this section, we consider mixed contractions $\cG \cdot \bu \otimes \bv$ of a $\GOTE(4, n)$ tensor $\cG$, where $\bu, \bv \in \bbS^{n - 1}$. From Theorem~1.5 of \cite{au2023spectral}, one obtains its LSD (we also provide a different proof).
\begin{proposition}\label{prop:mixed_bulk}
Suppose that $\langle \bu, \bv\rangle \to \rho$. Then, almost surely,
\[
    \mu_{n^{-1/2}\cG \cdot \bu \otimes \bv} \convd \nu_{\sc, (1 + \rho^2)/6}.
\]
\end{proposition}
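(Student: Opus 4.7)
The plan is to prove the proposition by a direct method-of-moments argument, reducing the GOTE structure to an effective Wigner-type second-moment profile. Specifically, I will show that for every $p \in \bbN$,
\[
\frac{1}{n}\, \bbE \Tr\!\left(\frac{M_n}{\sqrt n}\right)^{\!p} \;\longrightarrow\; m_p\bigl(\nu_{\sc, (1+\rho^2)/6}\bigr),
\]
and then upgrade to almost-sure convergence via a Wick-type variance bound. Writing $M_{ij} = \sum_{x, y} \cG_{ij x y}\, u_x v_y$ and expanding the trace cyclically,
\[
\bbE\Tr(M_n^p) = \sum_{i_1, \ldots, i_p} \sum_{\bx, \by}\Bigl(\prod_{t=1}^{p} u_{x_t} v_{y_t}\Bigr)\,\bbE\!\prod_{t=1}^{p} \cG_{i_t\, i_{t+1}\, x_t\, y_t}.
\]
Since the entries of $\cG$ form a centered Gaussian family, Wick's theorem collapses the inner expectation to a sum over pair partitions $\pi$ of $\{1, \ldots, p\}$ (killing the odd-$p$ case), with the pairwise covariances given by the explicit $\GOTE(4, n)$ formula
\[
\bbE\bigl[\cG_{p_1 p_2 p_3 p_4}\,\cG_{q_1 q_2 q_3 q_4}\bigr] = \frac{1}{6}\sum_{\sigma \in \fS_4} \prod_{k=1}^{4} \ind\bigl[p_k = q_{\sigma(k)}\bigr].
\]

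The heart of the argument is the evaluation of a single paired edge $(s, t) \in \pi$. Summing over the four contraction indices $x_s, y_s, x_t, y_t$ against $u_{x_s} v_{y_s} u_{x_t} v_{y_t}$, the $24$ permutations $\sigma \in \fS_4$ split into a \emph{bulk} type (those that map the matrix-index positions $\{1, 2\}$ bijectively to $\{1, 2\}$ and the contraction-index positions $\{3, 4\}$ to $\{3, 4\}$) and several non-bulk types that mix the two blocks. A short computation shows that the bulk contribution of the pair equals
\[
\tfrac{1 + \rho^2}{6}\,\bigl(\delta_{i_s i_t}\delta_{i_{s+1} i_{t+1}} + \delta_{i_s i_{t+1}}\delta_{i_{s+1} i_t}\bigr),
\]
where the summand $1$ comes from the aligned contraction pairing $(x_s, y_s) = (x_t, y_t)$ via $\|\bu\|^2 = \|\bv\|^2 = 1$, and the summand $\rho^2$ from the swapped pairing $(x_s, y_s) = (y_t, x_t)$ via $\langle \bu, \bv\rangle^2 \to \rho^2$. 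Each non-bulk $\sigma$ either pins one of the four $i$-indices of the edge to coincide with a contraction index (costing a free $i$-index when we eventually sum over $\bi$) or produces a residual factor involving only a bounded number of coordinates of $\bu$ and $\bv$.

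With this per-pair evaluation in hand, the classical Wigner bookkeeping closes the argument: for each pair partition $\pi$ of $\{1, \ldots, p\}$, the bulk contribution factors as $\bigl(\tfrac{1+\rho^2}{6}\bigr)^{p/2}$ times the number of $\bi$-assignments consistent with the Wigner-type identifications induced by $\pi$, and only non-crossing $\pi$ saturate the maximum count $n^{p/2+1}$; summing the $C_{p/2}$ non-crossing contributions reproduces the semicircle moments. The main technical obstacle is showing that the non-bulk $\sigma$-contributions, summed over \emph{all} pair partitions, give $o(n^{1+p/2})$ uniformly in the choice of contraction directions: since $\bu$ and $\bv$ can concentrate on a few coordinates, the residual factors must be tracked carefully and the loss of a free $i$-index per non-bulk edge established via Cauchy--Schwarz on partial sums. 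Almost-sure convergence then follows from a parallel Wick expansion of $\Var\bigl(n^{-1}\Tr((M_n/\sqrt n)^p)\bigr)$, in which the diagrams connecting the two copies of the trace cost at least two additional $i$-identifications to yield an $O(n^{-2})$ bound, followed by Borel--Cantelli.
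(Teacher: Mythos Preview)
Your approach is correct in outline and is essentially the combinatorial moment method used in \cite{au2023spectral}, which the paper itself cites as an alternative source for this result. The Wick formula you give for the $\GOTE(4,n)$ covariance is correct (the $\tfrac{1}{6}$ is exactly $(r-1)!^{-1}$), and your identification of the ``bulk'' permutations and their contribution $\tfrac{1+\rho^2}{6}(\delta_{i_s i_t}\delta_{i_{s+1}i_{t+1}} + \delta_{i_s i_{t+1}}\delta_{i_{s+1}i_t})$ is right. The non-bulk bound you sketch is the genuine technical content and requires some care: a single non-bulk $\sigma$ at a pair does not directly kill a free $i$-index but rather attaches a weight such as $u_{i_s}v_{i_t}$; the saving of a full factor $n$ per non-bulk edge then comes from the fact that such weights always appear in (at least) pairs and $|\sum_i u_i| \le \sqrt n$. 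This works, but it is not quite as immediate as your summary suggests.

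The paper's own proof is entirely different and much shorter: it establishes the distributional identity
\[
\cG \cdot \bu \otimes \bv \overset{d}{=} \text{(rank $\le 4$ matrix)} + \tfrac{1}{\sqrt 6}\sqrt{1 + \langle\bu,\bv\rangle^2}\, Z,
\]
with $Z$ an independent GOE, by matching covariances (Lemma~\ref{lem:cov-representation-mixed-4}). The rank inequality then immediately gives the EESD limit as the semicircle of variance $(1+\rho^2)/6$, and exponential concentration of the ESD around the EESD (Lemma~\ref{lem:concentration}) upgrades this to almost-sure convergence via Borel--Cantelli. Your approach trades this structural insight for a combinatorial expansion that is more self-contained but substantially longer; the paper's approach, by contrast, makes the edge analysis of later sections essentially free, since the low-rank-plus-GOE representation is exactly the setting of the BBP literature.
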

We are interested in the following question:
\begin{quote}
    \it
    How large does the overlap $\langle \bu, \bv\rangle$ need to be for $\cG \cdot \bu \otimes \bv$ and $\cG \cdot \bu \otimes \bu$ to have similar bulk/edge behaviour?
\end{quote}

Before we come to this, let us first look at a contiguity question. Let $\bbP_{\bu, \bv}$ denote the joint law of the entries of the $\cG \cdot \bu \otimes \bv$ on $\bbR^{n(n + 1)/2}$.

\begin{theorem}\label{thm:total_variation}
    There is an absolute constant $C > 0$ such that
    \[
        d_{\mathrm{TV}}(\bbP_{\bu, \bv}, \bbP_{\bu, \bu}) \le C n \|\bu - \bv\|_2.
    \]
\end{theorem}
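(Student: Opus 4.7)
The plan is to exploit the fact that both $\bbP_{\bu,\bv}$ and $\bbP_{\bu,\bu}$ are centered Gaussian measures on $\bbR^{n(n+1)/2}$ and reduce the TV bound to a comparison of their covariance matrices. First I would note that every entry of $\cG \cdot \bu \otimes \bv$ is a linear functional of the Gaussian tensor $\cG$, so both distributions are centered Gaussians; call their covariances $\Sigma_{\bu,\bv}$ and $\Sigma_{\bu} := \Sigma_{\bu,\bu}$. Then I would apply Pinsker's inequality together with the closed-form Gaussian KL formula, which after Taylor expansion in the relative perturbation $E := \Sigma_{\bu}^{-1/2}(\Sigma_{\bu,\bv}-\Sigma_{\bu})\Sigma_{\bu}^{-1/2}$ reduces the problem to showing $\|E\|_{\mathrm{F}} \lesssim n\|\bu-\bv\|_2$. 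Since the target bound is trivial whenever $\|\bu-\bv\|_2 \gtrsim 1/n$, I may restrict to the regime where $\|E\|_{\op}$ is small and the Taylor expansion is valid.

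Next, I would exploit bilinearity. Letting $L(\bw_1,\bw_2)$ denote the cross-covariance matrix of $\cG \cdot \bu \otimes \bw_1$ and $\cG \cdot \bu \otimes \bw_2$, bilinearity of the contraction yields
\[
    \Sigma_{\bu,\bv} - \Sigma_{\bu} = L(\bu,\bv-\bu) + L(\bv-\bu,\bu) + L(\bv-\bu,\bv-\bu).
\]
To bound each term I would use the GOTE second-moment identity $\bbE[\la \cG, S\ra \la \cG, T\ra] = 4\la \mathrm{Sym}(S), \mathrm{Sym}(T)\ra$ together with the identification $\sum_{i\le j} a_{ij} (\cG \cdot \bu \otimes \bw)_{ij} = \la \cG, A \otimes \bu \otimes \bw\ra$ for a suitable symmetric $A$ satisfying $\|A\|_{\mathrm{F}} \le \|a\|_2$. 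Since $\mathrm{Sym}$ is a projection, hence non-expansive in Frobenius norm, this yields $\|L(\bw,\bw)\|_{\op} \lesssim \|\bw\|_2^2$, and Cauchy--Schwarz then upgrades this to $\|L(\bw_1,\bw_2)\|_{\op} \lesssim \|\bw_1\|_2 \|\bw_2\|_2$. Hence $\|\Sigma_{\bu,\bv}-\Sigma_{\bu}\|_{\op} \lesssim \|\bu-\bv\|_2$, and converting to Frobenius norm via $\|\cdot\|_{\mathrm{F}} \le \sqrt{n(n+1)/2}\,\|\cdot\|_{\op}$ gives $\|\Sigma_{\bu,\bv}-\Sigma_{\bu}\|_{\mathrm{F}} \lesssim n\|\bu-\bv\|_2$.

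The remaining ingredient, which I expect to be the technical heart of the argument, is showing that $\Sigma_{\bu}$ is uniformly well-conditioned, i.e., $\|\Sigma_{\bu}^{-1}\|_{\op} = O(1)$. For this I would invoke the rotational invariance of GOTE: for any orthogonal $O$, one has $\cG \cdot (O\bu) \otimes (O\bu) \stackrel{d}{=} O\,(\cG \cdot \bu \otimes \bu)\,O^{\top}$, so the spectrum of $\Sigma_{\bu}$ coincides with that of $\Sigma_{\be_1}$. But when $\bu = \be_1$, the entries $(\cG \cdot \be_1 \otimes \be_1)_{ij} = \cG_{ij11}$ corresponding to distinct pairs $i \le j$ involve distinct index-multisets $\{i,j,1,1\}$ and are therefore independent Gaussians, so $\Sigma_{\be_1}$ is diagonal with entries bounded below by $\min_{i\le j} \sigma^2_{ij11} = 1/3$. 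Combining this with the previous estimate gives $\|E\|_{\mathrm{F}} \le \|\Sigma_{\bu}^{-1}\|_{\op} \, \|\Sigma_{\bu,\bv}-\Sigma_{\bu}\|_{\mathrm{F}} \lesssim n\|\bu-\bv\|_2$, completing the plan.
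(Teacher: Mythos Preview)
Your overall strategy is sound and largely parallels the paper's, though via different tools: the paper invokes a ready-made TV bound for centered Gaussians due to Devroye--Mehrabian--Reddad in place of your Pinsker-plus-KL expansion, bounds $\|\Sigma_{\bu,\bv}-\Sigma_{\bu,\bu}\|_F$ by brute-force computation using the explicit low-rank-plus-GOE representation of $\cG\cdot\bu\otimes\bv$, and lower-bounds $\lambda_{\min}(\Sigma_{\bu,\bu})$ by observing that this representation gives $\Sigma_{\bu,\bu}\succcurlyeq\theta^2\,\Cov(\vech(Z))$ for $Z\sim\GOE(n)$, the latter being diagonal with entries $\ge 1$. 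Your bilinearity-plus-symmetrization bound on $\|L(\bw_1,\bw_2)\|_{\op}$ is cleaner than the paper's term-by-term Frobenius estimate.

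There is, however, one genuine gap: the claim that ``the spectrum of $\Sigma_{\bu}$ coincides with that of $\Sigma_{\be_1}$'' is false. The map $M\mapsto OMO^\top$ is a Frobenius isometry on symmetric matrices, but $\vech$ is not an isometry from $(\text{Sym}_n,\|\cdot\|_F)$ to $(\bbR^{n(n+1)/2},\|\cdot\|_2)$, since $\|\vech(A)\|_2^2=\sum_{i\le j}A_{ij}^2$ weights diagonal and off-diagonal entries equally while $\|A\|_F^2$ does not. Concretely, for $n=2$ one computes $\Tr(\Sigma_{\be_1})=4+1+\tfrac23=\tfrac{17}{3}$ but $\Tr(\Sigma_{(1,1)/\sqrt2})=\tfrac{13}{6}+\tfrac{7}{6}+\tfrac{13}{6}=\tfrac{11}{2}$, so the spectra differ. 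The fix is mild: writing $P_O=L(O\otimes O)D$ with $L,D$ the elimination and duplication matrices, one has $\sigma_{\min}(P_O)\ge 1/\sqrt2$ (using $\tfrac12\|\vec(A)\|^2\le\|\vech(A)\|^2\le\|\vec(A)\|^2$ for symmetric $A$), whence $\lambda_{\min}(\Sigma_{\bu})\ge\tfrac12\lambda_{\min}(\Sigma_{\be_1})\ge\tfrac16$. Alternatively, the paper's route via $\cG\cdot\bu^{\otimes 2}\stackrel{d}{=}(\text{rank }2)+\theta Z$ yields $\lambda_{\min}(\Sigma_{\bu})\ge\theta^2=\tfrac13$ directly for every $\bu$, sidestepping rotational invariance altogether.
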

\begin{remark}
    The upper bound in Theorem~\ref{thm:total_variation} is in general tight, i.e. there are $\bu, \bv \in \bbS^{n - 1}$ such that a matching lower bound holds.
\end{remark}

This raises the question, when do the eigenvalues behave similarly? The following result shows that if $\|\bu - \bv\|_2 = o(1)$, then the bulk limits coincide.
\begin{proposition}\label{prop:mixed_vs_pure_lsd}
    If $\|\bu - \bv\|_2 = o(1)$, then, almost surely, $\mu_{n^{-1/2}\cG \cdot \bu \otimes \bv} \convd \nu_{\sc, \frac{1}{3}}$.
\end{proposition}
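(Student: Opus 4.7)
The cleanest route is a direct reduction to Proposition~\ref{prop:mixed_bulk}. Since $\bu, \bv \in \bbS^{n-1}$, the identity $\|\bu - \bv\|_2^2 = 2(1 - \langle \bu, \bv\rangle)$ shows that $\|\bu - \bv\|_2 = o(1)$ is equivalent to $\langle \bu, \bv\rangle \to 1$. Plugging $\rho = 1$ into the conclusion of Proposition~\ref{prop:mixed_bulk} then yields, almost surely, $\mu_{n^{-1/2}\cG \cdot \bu \otimes \bv} \convd \nu_{\sc, (1 + 1)/6} = \nu_{\sc, 1/3}$. So the first step is simply to verify that the proof of Proposition~\ref{prop:mixed_bulk} (presumably a moment calculation whose coefficients depend continuously on $\rho$) remains valid at the boundary $\rho = 1$; this should be automatic, since the bulk LSD for $\bu = \bv$ is already covered by Proposition~\ref{prop:lsd_pure}.

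If one prefers a self-contained argument that bypasses the boundary case, the natural alternative is a Hoffman--Wielandt perturbation against the pure contraction. Set $A = n^{-1/2} \cG \cdot \bu \otimes \bv$ and $B = n^{-1/2} \cG \cdot \bu \otimes \bu$; since $B$ is the pure contraction with $r = 4$, Proposition~\ref{prop:lsd_pure} gives $\mu_B \convas \nu_{\sc, 1/3}$ (using $\theta^2 = 1/(r - 1) = 1/3$). By Hoffman--Wielandt,
\[
    \frac{1}{n}\sum_{i} \big(\lambda_i(A) - \lambda_i(B)\big)^2 \le \frac{1}{n}\|A - B\|_{\mathrm{F}}^2 = \frac{1}{n^2}\|\cG \cdot \bu \otimes (\bv - \bu)\|_{\mathrm{F}}^2,
\]
so it suffices to show the right-hand side is $o(1)$ almost surely. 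Writing $\ba := \bv - \bu$ and expanding $\bbE[(\cG \cdot \bu \otimes \ba)_{ij}^2]$ via the covariances $\bbE[\cG_{ijk\ell}\cG_{ijk'\ell'}]$ (which are $O(1)$ and vanish unless $\{k', \ell'\}$ matches $\{k, \ell\}$ as a multiset) produces a per-entry bound of order $O(\|\ba\|_2^2 + \langle \bu, \ba\rangle^2) = O(\|\ba\|_2^2)$. Summing over $i, j$ gives $\bbE\|\cG \cdot \bu \otimes \ba\|_{\mathrm{F}}^2 = O(n^2 \|\ba\|_2^2) = o(n^2)$, and a standard Gaussian concentration argument applied to the Lipschitz map $\cG \mapsto \|\cG \cdot \bu \otimes \ba\|_{\mathrm{F}}$ upgrades this to almost sure convergence.

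The main obstacle, in either approach, lies in the combinatorial covariance bookkeeping for the fourth-order symmetric Gaussian tensor: one needs to separate the generic contributions (where all indices are distinct, giving the $O(n^2 \|\ba\|_2^2)$ bulk) from the diagonal corrections coming from index coincidences (which contribute lower orders in $n$), and to confirm that the cross-term $\langle \bu, \ba\rangle^2 = (\langle \bu, \bv\rangle - 1)^2$ is $o(1)$ under the hypothesis. Once this Frobenius-norm estimate is in hand, the $L^2$-Wasserstein inequality supplied by Hoffman--Wielandt (or, equivalently, the L\'evy continuity of spectral distributions) immediately transfers the LSD $\nu_{\sc, 1/3}$ from $\mu_B$ to $\mu_A$, completing the proof.
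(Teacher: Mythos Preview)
Your second approach is exactly the paper's argument: bound $\bbE\|\cG \cdot \bu \otimes \bv - \cG \cdot \bu \otimes \bu\|_F^2$ by $O(n^2 \|\bu - \bv\|^2)$, apply Hoffman--Wielandt to control the $W_2$ distance between the EESDs, and then use the concentration inequality of Lemma~\ref{lem:concentration} to upgrade to almost-sure ESD convergence. Two small differences: the paper obtains the Frobenius bound via the crude estimate $\Var(\cG_{i_1i_2i_3i_4}) \le 4$, with no index-coincidence case analysis at all, so the ``combinatorial bookkeeping'' you flag as the main obstacle is not actually needed; and the paper works at the level of the EESD throughout and invokes concentration only at the end, whereas you propose applying Gaussian concentration directly to $\|\cG \cdot \bu \otimes (\bv - \bu)\|_F$. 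Both orderings are fine.

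Your first approach is also valid, and shorter: the paper's own proof of Proposition~\ref{prop:mixed_bulk} goes through the low-rank representation of Lemma~\ref{lem:cov-representation-mixed-4} (not a moment computation), and that representation holds for arbitrary $\bu, \bv \in \bbS^{n-1}$, so there is no boundary issue at $\rho = 1$. Proposition~\ref{prop:mixed_vs_pure_lsd} is thus an immediate corollary of Proposition~\ref{prop:mixed_bulk}. The paper nonetheless gives the separate Hoffman--Wielandt proof, presumably to parallel the perturbation viewpoint used for the edge in Theorem~\ref{thm:mixed_vs_pure_top_evalue} and Corollary~\ref{cor:mixed_limit_largest_evalue}.
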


The following result shows how close the largest eigenvalues of $\cG \cdot \bu \otimes \bv$ and $\cG \cdot \bu \otimes \bu$ are. As may be guessed from the form of the upper bound, we use a covering argument together with Gaussian comparison inequalities.
\begin{theorem}\label{thm:mixed_vs_pure_top_evalue}
    Let $\cG \sim \GOTE(4, n)$. There exist universal constants $C_1, C_2 > 0$ such that for any $\varepsilon > 0$,
    \[
        |\bbE \lambda_1(n^{-1/2} \cG \cdot \bu \otimes \bv) - \bbE \lambda_1(n^{-1/2}\cG \cdot \bu \otimes \bu)| \le C_1 \varepsilon + C_2 \sqrt{\|\bu - \bv\|_2 \log \big(\tfrac{1}{\varepsilon}\big)}.
    \]
\end{theorem}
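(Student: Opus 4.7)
The plan is to identify both top eigenvalues as suprema of Gaussian processes over the sphere, and to bound the supremum of the difference process via a covering argument together with a uniform operator-norm bound for mixed contractions. Because $\cG$ is symmetric in all indices, both $\cG \cdot \bu \otimes \bv$ and $\cG \cdot \bu \otimes \bu$ are real symmetric matrices, so $\lambda_1(n^{-1/2} \cG \cdot \bu \otimes \by) = \sup_{\bx \in \bbS^{n-1}} X_\bx^{(\by)}$, where $X_\bx^{(\by)} := n^{-1/2} \langle \cG, \bx^{\otimes 2} \otimes \bu \otimes \by\rangle_{\mathrm{F}}$. Setting $Z_\bx := X_\bx^{(\bv)} - X_\bx^{(\bu)} = n^{-1/2}\langle \cG, \bx^{\otimes 2} \otimes \bu \otimes (\bv - \bu)\rangle_{\mathrm{F}}$, subadditivity of suprema yields
\[
\bbE \lambda_1(n^{-1/2} \cG \cdot \bu \otimes \bv) - \bbE \lambda_1(n^{-1/2} \cG \cdot \bu \otimes \bu) \le \bbE \sup_{\bx \in \bbS^{n-1}} Z_\bx,
\]
and since $\cG \stackrel{d}{=} -\cG$, the same bound controls the reverse difference, so it suffices to estimate $\bbE \sup_\bx Z_\bx$.

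For each $\bx$, $Z_\bx$ is a centered Gaussian. Using that $\bbE \langle \cG, T\rangle_{\mathrm{F}}^2 = 4 \|\mathrm{Sym}(T)\|_{\mathrm{F}}^2$ for $\cG \sim \GOTE(4,n)$ and that symmetrization is an orthogonal projection onto $\fS_{4,n}$, one gets $\Var(Z_\bx) \le 4 \|\bu - \bv\|_2^2/n$ uniformly in $\bx$. Fix $\varepsilon \in (0, 1)$ and let $\cN_\varepsilon \subset \bbS^{n-1}$ be an $\varepsilon$-net with $|\cN_\varepsilon| \le (3/\varepsilon)^n$. The standard Gaussian maximum bound then gives $\bbE \sup_{\bx \in \cN_\varepsilon} Z_\bx \le C \|\bu - \bv\|_2 \sqrt{\log(1/\varepsilon)}$. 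For the net approximation error, the identity $\bx^\top A \bx - \bx'^\top A \bx' = (\bx - \bx')^\top A (\bx + \bx')$ for symmetric $A$ yields $|Z_\bx - Z_{\bx'}| \le 2 \varepsilon \|\tilde M\|_{\mathrm{op}}/\sqrt n$, where $\tilde M := \cG \cdot \bu \otimes (\bv - \bu)$.

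The remaining ingredient is a uniform operator-norm estimate $\bbE \|\cG \cdot \bu \otimes \ba\|_{\mathrm{op}} \le C' \sqrt n$ for arbitrary unit vectors $\bu, \ba$. This follows from a separate covering argument: for any $\bx, \by \in \bbS^{n-1}$, the bilinear form $\bx^\top (\cG \cdot \bu \otimes \ba) \by$ is a centered Gaussian with variance at most $4$ (by the same symmetrization bound), so a $(1/4)$-net on $\bbS^{n-1} \times \bbS^{n-1}$ together with a sub-Gaussian union bound yields $\bbE \|\cG \cdot \bu \otimes \ba\|_{\mathrm{op}} \le C' \sqrt n$ uniformly in $(\bu, \ba)$. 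By homogeneity in the weight direction, $\bbE \|\tilde M\|_{\mathrm{op}} \le C' \sqrt n \|\bu - \bv\|_2$, so the net error contributes at most $2 C' \varepsilon \|\bu - \bv\|_2 \le 4 C' \varepsilon$.

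Combining the two contributions gives
\[
\bbE \sup_{\bx \in \bbS^{n-1}} Z_\bx \le 4 C' \varepsilon + C \|\bu - \bv\|_2 \sqrt{\log(1/\varepsilon)},
\]
and the stated estimate follows from the elementary observation $\|\bu - \bv\|_2 \le 2$, hence $\|\bu - \bv\|_2 \le \sqrt{2 \|\bu - \bv\|_2}$. The main technical hurdle is the uniform operator-norm bound $\bbE \|\cG \cdot \bu \otimes \ba\|_{\mathrm{op}} \le C' \sqrt n$: while Proposition~\ref{prop:mixed_bulk} strongly hints at it, uniformity in $(\bu, \ba)$ is what requires the explicit covering+concentration argument sketched above. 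Everything else is essentially bookkeeping of Gaussian maxima and $\varepsilon$-net approximations on the sphere.
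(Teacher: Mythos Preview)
Your proof is correct and takes a genuinely different, more elementary route than the paper. The paper discretizes both $\lambda_1(n^{-1/2}\cG\cdot\bu\otimes\bv)$ and $\lambda_1(n^{-1/2}\cG\cdot\bu\otimes\bu)$ via the same $\varepsilon$-net and then compares the two finite Gaussian maxima using the quantitative Sudakov--Fernique inequality of \cite{chernozhukov2015comparison}; this requires controlling $\max_{j,k}\big|\Cov(\bu_j^\top M(1)\bu_j,\bu_k^\top M(1)\bu_k)-\Cov(\bu_j^\top M(0)\bu_j,\bu_k^\top M(0)\bu_k)\big|$, which is the content of a separate interpolation lemma (Lemma~\ref{lem:interpolation}). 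Your approach sidesteps both ingredients: by passing to the \emph{difference} process $Z_\bx$ you only need the pointwise variance bound $\Var(Z_\bx)\le 4\|\bu-\bv\|_2^2/n$, after which the elementary estimate $\bbE\max_{i\le N}G_i\le\sigma\sqrt{2\log N}$ handles the net maximum. This avoids Sudakov--Fernique and the interpolation lemma altogether, and in fact yields the stronger intermediate bound $C\|\bu-\bv\|_2\sqrt{\log(1/\varepsilon)}$, which you then relax via $\|\bu-\bv\|_2\le\sqrt{2\|\bu-\bv\|_2}$ to match the stated theorem. The paper's approach, on the other hand, is more robust in that it compares the two processes directly without needing the difference to factor as a contraction of $\cG$; that structural observation is what makes your shortcut work here.
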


\begin{figure}[!t]
    \centering
    \begin{tabular}{cc}
    \includegraphics[width=0.48\textwidth]{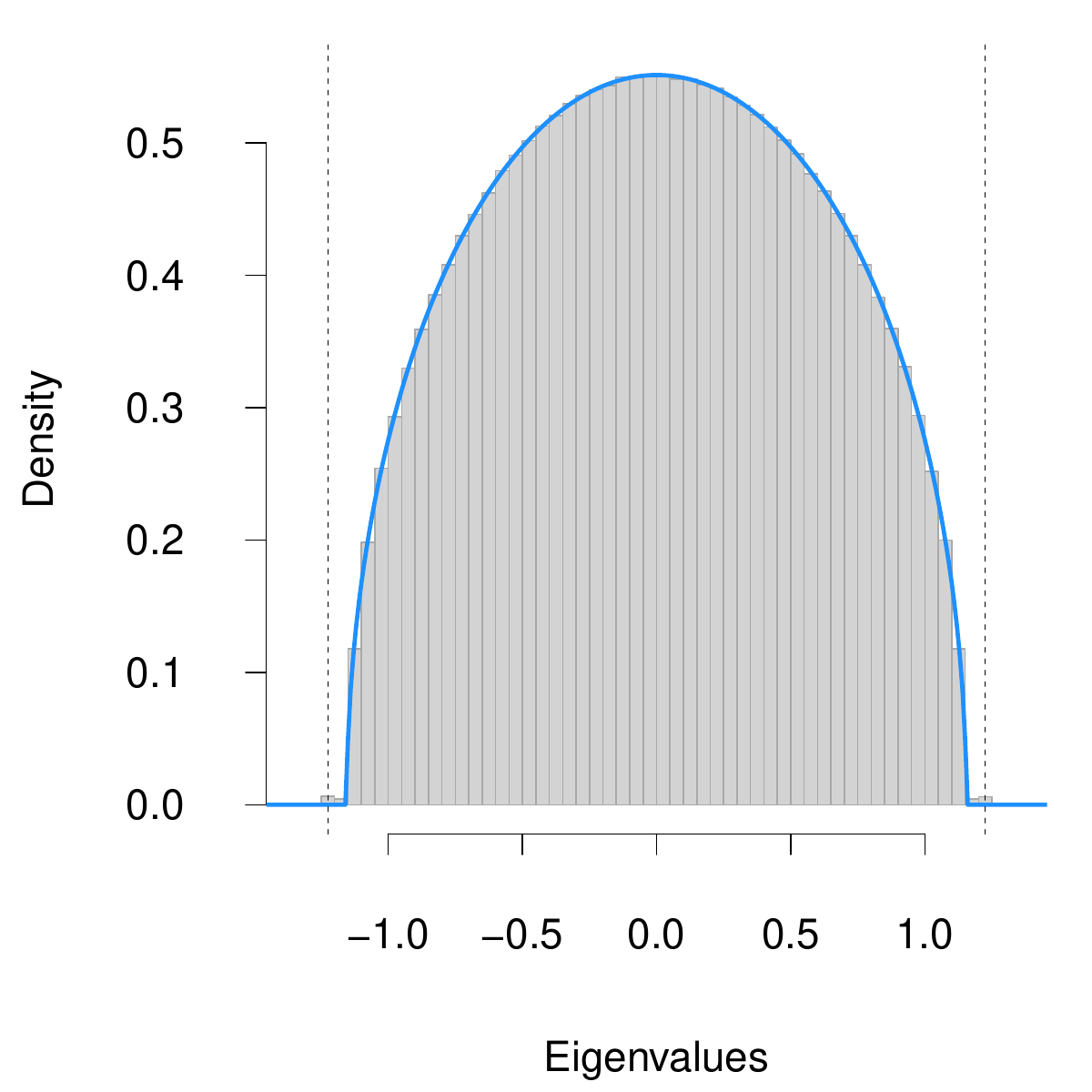} &
    \includegraphics[width=0.48\textwidth]{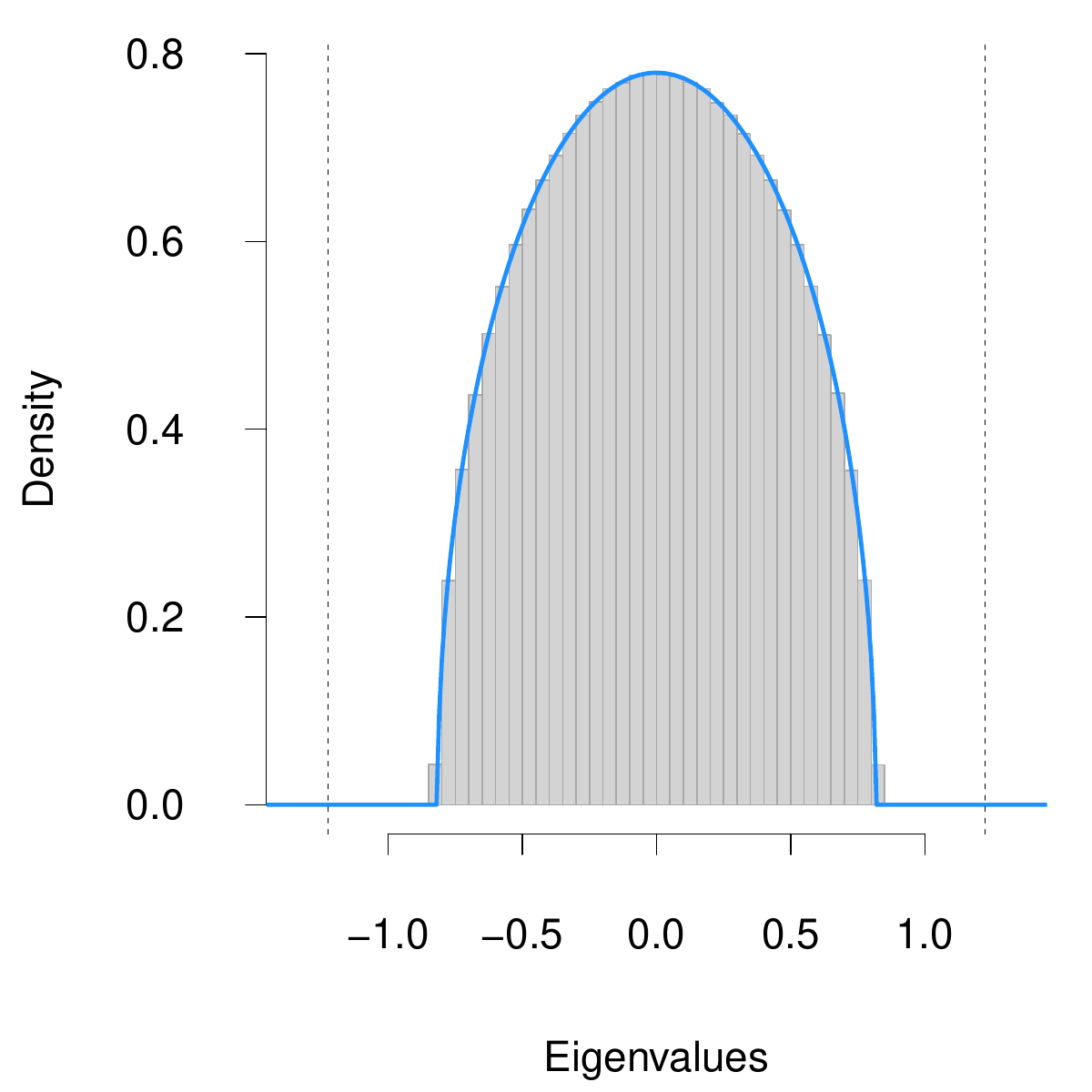} 
    \end{tabular}
    \caption{Histograms of the eigenvalues of $n^{-1/2}\cG \cdot \be_1 \otimes \be_1$ (left) and $n^{-1/2}\cG \cdot~\be_1 \otimes \be_2$ (right) for $n = 2500$, based on $100$ replications. The solid (blue) curves depict the corresponding semi-circle densities. The dotted vertical lines mark the locations $\pm \varpi_4 = \pm \frac{2}{\sqrt{3}}$. Note that there are two outlier eigenvalues in the spectrum of $n^{-1/2}\cG \cdot \be_1 \otimes \be_1$ and  none in that of $n^{-1/2}\cG \cdot \be_1 \otimes \be_2$.}
    \label{fig:pure-vx-mixed}
\end{figure}

As a corollary, we obtain that if $\|\bu - \bv\|_2 = o(1)$, then the largest eigenvalue of $n^{-1/2} \cG \cdot \bu \otimes \bv$ has the same limit as that of $n^{-1/2} \cG \cdot \bu \otimes \bu$.
\begin{corollary}\label{cor:mixed_limit_largest_evalue}
    Suppose $\|\bu - \bv\|_2 = o(1)$. Then
    \[
        \lambda_1(n^{-1/2} \cG \cdot \bu \otimes \bv) \convp \varpi_4 = \frac{1}{\sqrt{3}}\left(\sqrt{2} + \frac{1}{\sqrt{2}}\right).
    \]
\end{corollary}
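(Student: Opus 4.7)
The plan is to reduce the mixed case to the pure case (for which \eqref{eq:edge_prob_conv_r_fixed} applied with $r = 4$ gives $\lambda_1(n^{-1/2}\cG \cdot \bu \otimes \bu) \convp \varpi_4$) via the triangle inequality
\begin{align*}
    |\lambda_1(n^{-1/2}\cG \cdot \bu \otimes \bv) - \varpi_4|
    &\le |\lambda_1(n^{-1/2}\cG \cdot \bu \otimes \bv) - \bbE \lambda_1(n^{-1/2}\cG \cdot \bu \otimes \bv)| \\
    &\quad + |\bbE \lambda_1(n^{-1/2}\cG \cdot \bu \otimes \bv) - \bbE \lambda_1(n^{-1/2}\cG \cdot \bu \otimes \bu)| \\
    &\quad + |\bbE \lambda_1(n^{-1/2}\cG \cdot \bu \otimes \bu) - \varpi_4|,
\end{align*}
and arguing that each of the three pieces on the right tends to $0$. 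The middle piece is handled directly by Theorem~\ref{thm:mixed_vs_pure_top_evalue}: choosing, for instance, $\varepsilon = \|\bu - \bv\|_2^{1/2}$, which is $o(1)$ by hypothesis, makes both $C_1 \varepsilon$ and $C_2 \sqrt{\|\bu - \bv\|_2 \log(1/\varepsilon)}$ tend to $0$.

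For the first and third pieces, the main ingredient is Gaussian concentration. Writing $\cG = \sqrt{r}\, Z$ with $Z$ a standard Gaussian on $\fS_{r, n}$ (equipped with the inherited Frobenius inner product), I use the variational representation
\[
    \lambda_1(n^{-1/2}\cG \cdot \bu \otimes \bv) \;=\; \frac{1}{\sqrt{n}}\, \sup_{\bx \in \bbS^{n - 1}} \sum_{i,j,k,l} \cG_{ijkl}\, x_i x_j u_k v_l
\]
to exhibit $\lambda_1(n^{-1/2}\cG \cdot \bu \otimes \bv)$ as a supremum of linear functionals of $\cG$ whose coefficient tensors $\bx \otimes \bx \otimes \bu \otimes \bv$ each have Frobenius norm $1$, and whose orthogonal projections onto $\fS_{4, n}$ have Frobenius norm at most $1$. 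Consequently, the map $\cG \mapsto \lambda_1(n^{-1/2}\cG \cdot \bu \otimes \bv)$ is $n^{-1/2}$-Lipschitz on $\fS_{4, n}$, and the Borell--Tsirelson--Ibragimov--Sudakov inequality yields
\[
    \bbP\bigl(|\lambda_1(n^{-1/2}\cG \cdot \bu \otimes \bv) - \bbE \lambda_1(n^{-1/2}\cG \cdot \bu \otimes \bv)| > t\bigr) \le 2 \exp(-c n t^2)
\]
for an absolute constant $c > 0$, and analogously for the pure contraction. The first triangle-term thus vanishes in probability. Combining this concentration bound in the pure case with \eqref{eq:edge_prob_conv_r_fixed} and Slutsky's theorem shows that $\bbE \lambda_1(n^{-1/2}\cG \cdot \bu \otimes \bu) \to \varpi_4$, eliminating the third piece.

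The argument is essentially an assembly of the preceding results, and no genuine obstacle arises. The only mildly delicate point is the Lipschitz bound on $\fS_{4, n}$ with the induced Frobenius inner product, but this reduces to a one-line Cauchy--Schwarz computation once one notes that orthogonal projection onto $\fS_{4, n}$ is a Frobenius-norm contraction.
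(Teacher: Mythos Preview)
Your proof is correct and follows essentially the same route as the paper: the same triangle inequality, the middle piece by Theorem~\ref{thm:mixed_vs_pure_top_evalue}, the first piece by Gaussian concentration (the paper invokes its Lemma~\ref{lem:concentration} rather than re-deriving Borell--TIS), and the third piece by combining concentration with the pure-case limit~\eqref{eq:edge_prob_conv_r_fixed}. The only cosmetic difference is that you pick an explicit $\varepsilon = \|\bu - \bv\|_2^{1/2}$ whereas the paper takes $\limsup$ and then sends $\varepsilon \to 0$.
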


Our final result shows that if $\langle\bu, \bv\rangle = o(1)$, then there are no outlier eigenvalues in the limiting spectrum of $n^{-1/2} \cG \cdot \bu \otimes \bv$.
\begin{theorem}\label{thm:mixed_orthogonal}
    If $\langle\bu, \bv\rangle \to 0$, then $\lambda_1(n^{-1/2} \cG \cdot \bu \otimes \bv) \overset{\bbP}{\to} 2/\sqrt{6}$.
\end{theorem}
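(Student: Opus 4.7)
The plan is to first reduce to the case $\bu = \be_1$, $\bv = \be_2$ using the orthogonal invariance of $\cG$, and then handle a possibly nonzero $\rho_n := \langle \bu, \bv \rangle = o(1)$ by a Weyl-inequality perturbation. In the reduced case, the entries of $M := \cG \cdot \be_1 \otimes \be_2$ are $M_{ij} = \cG_{ij12}$; these are, up to the symmetry $M_{ij} = M_{ji}$, mutually independent centered Gaussians, with $\Var(M_{ij}) = 1/6$ for $i \ne j$ and $i, j \notin \{1, 2\}$ (matching the bulk LSD $\nu_{\sc, 1/6}$ from Proposition~\ref{prop:mixed_bulk}), and inflated variance for the $O(n)$ entries involving row or column $1$ or $2$. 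Partitioning $\{1, \ldots, n\} = \{1, 2\} \sqcup \{3, \ldots, n\}$ yields the block decomposition
\[
    M = \begin{pmatrix} A & B \\ B^\top & W \end{pmatrix},
\]
with $A, B, W$ mutually independent, $W$ a GOE-like Wigner matrix of size $n - 2$ with off-diagonal variance $1/6$ and diagonal variance $1/3$, and $B$ a $2 \times (n-2)$ matrix of i.i.d.\ $\cN(0, 1/3)$ entries. Cauchy interlacing and Wigner's theorem give $\lambda_1(M)/\sqrt{n} \ge \lambda_1(W)/\sqrt{n} \to 2/\sqrt{6}$, the required lower bound.

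For the matching upper bound, I would apply the Schur complement: for $\lambda > \lambda_1(W)$, $\lambda \in \spec(M)$ if and only if $\det\bigl(A - \lambda I_2 - B(W - \lambda I)^{-1} B^\top\bigr) = 0$. Set $\lambda = \sqrt{n}\tilde\lambda$ and $K := (W - \lambda I)^{-1}$. On the high-probability event $\{\lambda_1(W) < \sqrt{n}(2/\sqrt{6} + \varepsilon/2)\}$, we have the deterministic bound $\|K\|_F = O(1)$ for $\tilde\lambda \ge 2/\sqrt{6} + \varepsilon$, so the Hanson-Wright inequality gives $BKB^\top = \tfrac{1}{3}\Tr(K)\, I_2 + O_P(1)$. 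Convergence of the empirical Stieltjes transform of $n^{-1/2}W$ to $g(\tilde\lambda) := 3(-\tilde\lambda + \sqrt{\tilde\lambda^2 - 2/3})$ (the Stieltjes transform of $\nu_{\sc, 1/6}$) then yields $\tfrac{1}{3}\Tr(K) = \tfrac{\sqrt{n}}{3} g(\tilde\lambda) + O_P(1)$. Using the identity $\tilde\lambda + g(\tilde\lambda)/3 = \sqrt{\tilde\lambda^2 - 2/3}$, the determinantal equation simplifies to $\det\bigl(A - \sqrt{n}\sqrt{\tilde\lambda^2 - 2/3}\, I_2 + O_P(1)\bigr) = 0$, whose leading term is $n(\tilde\lambda^2 - 2/3) = \Theta(n)$, strictly positive for any fixed $\tilde\lambda \ge 2/\sqrt{6} + \varepsilon$. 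This rules out any eigenvalue of $M$ in $[\sqrt{n}(2/\sqrt{6} + \varepsilon),\, C\sqrt{n}]$, where $C$ is any constant with $\|M\|_\op \le C\sqrt{n}$ w.h.p.\ (a standard bound for Gaussian Wigner-type matrices with uniformly bounded-variance entries).

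For $\rho_n \ne 0$ but $\rho_n = o(1)$, write $\bv = \rho_n \bu + \sqrt{1 - \rho_n^2}\,\bv^\perp$ with $\bv^\perp \in \bbS^{n-1}$ orthogonal to $\bu$. Weyl's inequality gives
\[
    \lambda_1(n^{-1/2}\cG \cdot \bu \otimes \bv) \le |\rho_n|\,\|n^{-1/2}\cG \cdot \bu \otimes \bu\|_\op + \sqrt{1 - \rho_n^2}\,\lambda_1(n^{-1/2}\cG \cdot \bu \otimes \bv^\perp),
\]
and the right-hand side converges in probability to $0 \cdot \varpi_4 + 1 \cdot 2/\sqrt{6} = 2/\sqrt{6}$ by Theorem~\ref{thm:edge-limits} and the orthogonal case just established. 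The matching lower bound follows from Proposition~\ref{prop:mixed_bulk} since the LSD $\nu_{\sc, (1+\rho_n^2)/6}$ has right endpoint $2\sqrt{(1+\rho_n^2)/6} \to 2/\sqrt{6}$, so $\lambda_1(n^{-1/2}\cG \cdot \bu \otimes \bv) \ge 2/\sqrt{6} - \varepsilon$ w.h.p.\ for every $\varepsilon > 0$. The main technical obstacle is securing the uniformity of the Hanson-Wright and Stieltjes-transform approximations over a compact range of $\tilde\lambda$; a routine $\varepsilon$-net argument combined with the monotonicity of the resolvent in $\lambda$ suffices.
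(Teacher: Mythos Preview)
Your proof is correct, but the route is genuinely different from the paper's.

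The paper never specializes to $\bu = \be_1$, $\bv = \be_2$. Instead it invokes the structural representation of Lemma~\ref{lem:cov-representation-mixed-4}, writing $\cG \cdot \bu \otimes \bv \overset{d}{=} \frac{1}{\sqrt 6}Q + \frac{1}{\sqrt 6}\sqrt{1+\rho_n^2}\,Z$ with $Q$ a rank-$\le 4$ matrix built from $U,\bV_1,\bV_2,\bu,\bv$. A direct computation (Lemma~\ref{lem:limsup_Q}) shows $\limsup_n n^{-1/2}\|Q\|_{\op}\le 1$ almost surely when $\rho_n\to 0$; since $1$ is exactly the BBP threshold for a perturbation of $\frac{1}{\sqrt 6}Z$, sandwiching $Q$ between $\pm \tilde Q$ (where $\tilde Q$ has all nontrivial eigenvalues equal to $1$) and applying the Benaych-Georges--Nadakuditi theorem gives both bounds at once, uniformly over the whole regime $\rho_n\to 0$.

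Your approach trades the BBP machinery for elementary tools: orthogonal invariance, Cauchy interlacing, and a Schur-complement/Hanson--Wright computation. This is more self-contained and makes explicit \emph{why} no outlier appears: the $2\times 2$ Schur complement at level $\sqrt n\,\tilde\lambda$ equals $-\sqrt n\sqrt{\tilde\lambda^2-2/3}\,I_2+O_P(1)$, which is negative definite for any fixed $\tilde\lambda>2/\sqrt 6$. One small sharpening: showing the determinant is nonzero at each fixed $\tilde\lambda$ is not by itself enough to exclude eigenvalues in an interval; what you actually want is the Haynsworth inertia formula, which says that for $\lambda>\lambda_1(W)$ the number of eigenvalues of $M$ above $\lambda$ equals the number of positive eigenvalues of $S(\lambda)$. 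Since $S(\lambda)$ is strictly Loewner-decreasing in $\lambda$ (its derivative is $-I_2-B(W-\lambda I)^{-2}B^\top\preceq -I_2$), negative definiteness at the single point $\lambda_0=\sqrt n(2/\sqrt 6+\varepsilon)$ already forces $S(\lambda)\prec 0$ for all $\lambda\ge\lambda_0$, and no net argument is needed. The cost of your approach is the extra perturbation step to pass from $\rho_n=0$ to $\rho_n\to 0$; the paper's argument handles all $\rho_n\to 0$ in one stroke because the operator-norm bound on $Q$ holds in that generality.
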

An empirical demonstration of Corollary~\ref{cor:mixed_limit_largest_evalue} and Theorem~\ref{thm:mixed_orthogonal} appears in Figure~\ref{fig:pure-vx-mixed}.

\begin{remark}
 The bulk limit of general mixed contractions has been obtained in \cite{au2023spectral} for fixed $r$. Edge fluctuations and finer spectral properties of general mixed contractions will be studied in a future work. While our approach for $r = 4$ works in principle, the resulting representations are much more complicated and we do not yet have a handle on these.  
\end{remark}

\section{Proofs of our main results}\label{sec:proofs}

\subsection{The correlation structure of pure contractions}
Although the entries of a GOTE tensor are themselves independent, contractions introduce non-trivial correlations. The following lemma describes the covariances between the entries of $M$.
\begin{lemma}\label{lem:cov-structure-pure}
    Suppose $i,j,k,l \in[n]$ are district indices. Set $\alpha^2 = \frac{(r - 2)(r - 3)}{(r - 1)}$, $\beta^2 = \frac{r - 2}{r - 1}$, and $\theta^2 = \frac{1}{r - 1}$. Then
    \begin{align} \label{eq:var-1}
        \Var(M_{ii}) &= 2 \theta^2 + 4\beta^2 w_i^2 + \alpha^2 w_i^4; \\ \label{eq:var-2}
        \Var(M_{ij}) &= \theta^2 + \beta^2 (w_i^2 + w_j^2) + \alpha^2 w_i^2 w_j^2; \\ \label{eq:cov-1}
        \Cov(M_{ij}, M_{kl}) &= \alpha^2 w_i w_j w_k w_l; \\ \label{eq:cov-2}
        \Cov(M_{ii}, M_{kl}) &= \alpha^2 w_i^2 w_k w_l; \\ \label{eq:cov-3}
        \Cov(M_{ii}, M_{kk}) &= \alpha^2 w_i^2 w_k^2; \\ \label{eq:cov-4}
        \Cov(M_{ij}, M_{il}) &= \beta^2 w_j w_l + \alpha^2 w_i^2 w_j w_l; \\ \label{eq:cov-5}
        \Cov(M_{ii}, M_{il}) &= 2 \beta^2 w_i w_l + \alpha^2 w_i^3 w_l.
    \end{align}
\end{lemma}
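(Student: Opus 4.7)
\emph{Proof sketch.} The plan is to express each covariance as the Frobenius inner product of two symmetrized rank-one tensors and then enumerate contributions combinatorially. Observe first that $M_{ij} = \langle \cG, T^{(ij)}\rangle_{\mathrm{F}}$ with $T^{(ij)} := \be_i \otimes \be_j \otimes \bw^{\otimes (r - 2)}$, and because $\cG$ is symmetric we may equivalently write $M_{ij} = \langle \cG, A^{(ij)}\rangle_{\mathrm{F}}$, where $A^{(ij)} \in \fS_{r,n}$ is the full symmetrization of $T^{(ij)}$. The GOTE density on the Hilbert space $\fS_{r, n}$ is $\propto \exp(-\|\cG\|_{\mathrm{F}}^2/(2r))$, which means $\cG$ has i.i.d.\ $\cN(0, r)$ coordinates in any orthonormal basis of $\fS_{r, n}$. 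Hence $\Cov(\langle \cG, A\rangle_{\mathrm{F}}, \langle \cG, B\rangle_{\mathrm{F}}) = r \langle A, B\rangle_{\mathrm{F}}$ for all $A, B \in \fS_{r, n}$, and the task reduces to computing
\[
    \Cov(M_{ij}, M_{kl}) = r \langle A^{(ij)}, A^{(kl)}\rangle_{\mathrm{F}}.
\]

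Expanding one symmetrization and absorbing a factor of $r!$ via a change of variables gives
\[
    \langle A^{(ij)}, A^{(kl)}\rangle_{\mathrm{F}} = \frac{1}{r!} \sum_{\pi \in S_r} \langle T^{(ij)}, \pi \cdot T^{(kl)}\rangle_{\mathrm{F}}.
\]
For each $\pi \in S_r$, the tensor $\pi \cdot T^{(kl)}$ pins $k, l$ at positions $\pi(1), \pi(2)$ and carries $\bw$-factors at every other position, while $T^{(ij)}$ pins $i, j$ at positions $1, 2$. Collapsing free-index sums via $\|\bw\|^2 = 1$ reduces each $\langle T^{(ij)}, \pi \cdot T^{(kl)}\rangle_{\mathrm{F}}$ to an indicator of index compatibility times a product of at most four entries of $\bw$. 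The natural way to organize the sum is to stratify $\pi \in S_r$ by the overlap $|\{1, 2\} \cap \{\pi(1), \pi(2)\}| \in \{0, 1, 2\}$; the three strata contain $2(r-2)!$, $4(r-2)(r-2)!$, and $(r-2)(r-3)(r-2)!$ permutations respectively (summing to $r!$, a useful sanity check). After multiplication by $r/r!$, these counts produce exactly the scalars $2\theta^2$, $4\beta^2$, and $\alpha^2$.

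The seven identities then fall out by specializing $(i, j, k, l)$ and tallying which stratum contributes non-trivially. For \eqref{eq:cov-1} with distinct $i, j, k, l$, only the overlap-$0$ stratum survives and yields $\alpha^2 w_i w_j w_k w_l$. For \eqref{eq:var-2}, the overlap-$2$ stratum contributes $\theta^2$ via its single surviving configuration, the overlap-$1$ stratum contributes $\beta^2(w_i^2 + w_j^2)$ via two of its four subcases, and the overlap-$0$ stratum contributes $\alpha^2 w_i^2 w_j^2$. For diagonal entries, $T^{(ii)}$ pins the same value at positions $1$ and $2$, so both overlap-$2$ configurations and all four overlap-$1$ subcases survive; this accounts for the extra factors of $2$ in \eqref{eq:var-1} and \eqref{eq:cov-5}. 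Identities \eqref{eq:cov-2}, \eqref{eq:cov-3}, \eqref{eq:cov-4} follow by the same enumeration. The only delicate step is correctly identifying which of the four overlap-$1$ subcases (indexed by whether $1$ or $2$ coincides with $\pi(1)$ or $\pi(2)$) survives for each index pattern; the factors of $2$ in the diagonal identities are the bookkeeping signature of this step.
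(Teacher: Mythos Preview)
Your argument is correct and yields all seven identities, but it is a genuinely different route from the paper's. The paper expands each covariance directly as a sum over indices $i_3,\dots,i_r,i_3',\dots,i_r'$, uses the entrywise variance formula $\Var(\cG_{i_1,\dots,i_r}) = r/\#\perms(i_1,\dots,i_r)$, and then recognizes each resulting sum as a moment of a binomial or multinomial random variable (e.g.\ $\Var(M_{ii}) = \frac{1}{r-1}\bbE[(X+2)(X+1)]$ with $X\sim\Binomial(r-2,w_i^2)$), treating the seven cases one at a time. Your approach instead exploits the Hilbert-space description of the GOTE: writing $M_{ij}=\langle\cG,A^{(ij)}\rangle_{\mathrm F}$ with $A^{(ij)}$ the symmetrization of $\be_i\otimes\be_j\otimes\bw^{\otimes(r-2)}$ and using $\Cov(\langle\cG,A\rangle_{\mathrm F},\langle\cG,B\rangle_{\mathrm F})=r\langle A,B\rangle_{\mathrm F}$, you reduce everything to a single permutation sum, stratify by $|\{1,2\}\cap\{\pi(1),\pi(2)\}|$, and obtain in one stroke the master formula
\[
\Cov(M_{ij},M_{kl})=\theta^2(\delta_{ik}\delta_{jl}+\delta_{il}\delta_{jk})+\beta^2(\delta_{ik}w_jw_l+\delta_{jk}w_iw_l+\delta_{il}w_jw_k+\delta_{jl}w_iw_k)+\alpha^2 w_iw_jw_kw_l,
\]
from which all seven identities follow by specialization. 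Your method is cleaner and avoids the case-by-case multiset combinatorics entirely; the paper's method, on the other hand, is more elementary in that it never invokes the isotropic-Gaussian structure on $\fS_{r,n}$ and keeps everything at the level of concrete index sums.
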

Our main observation is that the covariance structure of $M$ may be explained by a random rank-$2$ perturbation of a scaled GOE matrix.
\begin{lemma}\label{lem:rank-2-perturbation}
Let $\alpha = \sqrt{\frac{(r - 2)(r - 3)}{(r - 1)}}$, $\beta = \sqrt{\frac{r - 2}{r - 1}}$, $\theta = \sqrt{\frac{1}{r - 1}}$ and set
\begin{equation}\label{eq:rank-2-perturbation}
    X := \alpha U \bw \bw^\top + \beta(\bV \bw^\top + \bw \bV^\top) + \theta Z,
\end{equation}
where $Z$ is a GOE random matrix, $\bV$ is an independent standard Gaussian vector, and $U$ is another independent Gaussian random variable. Then $M \overset{d}{=} X$, i.e. the entries of $M$ have the same joint distribution as those of $X$.
\end{lemma}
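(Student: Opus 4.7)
The plan is to verify equality in distribution by matching means and covariances, exploiting the fact that both random matrices are centered Gaussian.

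First I would observe that $M$ is centered jointly Gaussian as a linear function of the Gaussian tensor $\cG$, and that $X$ is centered jointly Gaussian by construction (it is a linear combination of the independent Gaussian objects $U$, $\bV$, $Z$). Since the joint law of the entries of any symmetric matrix in either case is determined by the collection of entries on and above the diagonal, and since centered jointly Gaussian vectors with the same covariance are equal in distribution, it suffices to show that the covariance structure of $X$ agrees with that of $M$ as computed in Lemma~\ref{lem:cov-structure-pure}.

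Next I would write out the entries of $X$ explicitly: for $i \ne j$,
\[
    X_{ij} = \alpha U w_i w_j + \beta (V_i w_j + w_i V_j) + \theta Z_{ij}, \qquad X_{ii} = \alpha U w_i^2 + 2 \beta V_i w_i + \theta Z_{ii}.
\]
Using the mutual independence of $U \sim \cN(0, 1)$, $\bV \sim \cN(\mathbf{0}, I_n)$, and the GOE matrix $Z$ (recall $\Var(Z_{ii}) = 2$, $\Var(Z_{ij}) = 1$ for $i \ne j$, and entries above the diagonal are independent), each of the seven identities \eqref{eq:var-1}--\eqref{eq:cov-5} follows from a short computation. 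For example, for distinct $i, j, k, l$, the $U$-contribution to $\Cov(X_{ij}, X_{kl})$ is $\alpha^2 w_i w_j w_k w_l$, the $V$-contribution vanishes because $V_i, V_j, V_k, V_l$ are independent, and the $Z$-contribution vanishes because $(i, j)$ and $(k, l)$ are distinct unordered pairs, recovering \eqref{eq:cov-1}. Similarly, for $\Cov(X_{ij}, X_{il})$ with $i, j, l$ distinct, only the $U$-term and the cross-term $\beta^2 \Cov(V_i w_j, V_i w_l) = \beta^2 w_j w_l$ survive, reproducing \eqref{eq:cov-4}; an analogous bookkeeping handles the remaining identities, where the factor of $2$ in the definition of $X_{ii}$ accounts for the $4\beta^2$ in \eqref{eq:var-1} and the $2\beta^2$ in \eqref{eq:cov-5}.

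There is no serious obstacle here since everything reduces to routine second-moment algebra; the only conceptual point is the choice of the coefficients $\alpha, \beta, \theta$, which are precisely engineered so that the three independent sources $U$, $\bV$, $Z$ absorb, respectively, the rank-one directions along $\bw\bw^\top$, the symmetric rank-two perturbation in the directions $\bw \otimes \be_i + \be_i \otimes \bw$, and the isotropic GOE-like background. Once all seven covariance identities are verified, the distributional identity $M \overset{d}{=} X$ follows immediately from the Gaussian uniqueness argument.
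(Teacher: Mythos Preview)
Your proposal is correct and follows exactly the same approach as the paper: both $M$ and $X$ are centered Gaussian, so equality in law reduces to matching the covariance structures, and you verify that the covariances of $X$ reproduce the formulas \eqref{eq:var-1}--\eqref{eq:cov-5} from Lemma~\ref{lem:cov-structure-pure}. The paper's proof states this in one line without writing out the entrywise computations you carry out, but the underlying argument is identical.
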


The above distributional representation unlocks a host of limiting spectral properties, which would otherwise be more difficult to derive.

\begin{proof}[Proof of Lemma~\ref{lem:cov-structure-pure}]
We first prove \eqref{eq:var-1}. Note that
    \begin{align*}
    \Var(M_{ii}) &=\sum_{\substack{i_3, \ldots, i_r \\ i_3', \ldots, i_r'}} \bbE[\cG_{iii_3\cdots i_r} \cG_{iii_3'\cdots i_r'}] w_{i_3} \cdots w_{i_r} w_{i_3'} \cdots w_{i_r'} \\
    &= \sum_{i_3, \ldots, i_r} \frac{r \#\perms(i_3, \ldots, i_r)}{\#\perms(i, i, i_3, \ldots, i_r)} w_{i_3}^2 \cdots w_{i_r}^2 \\
    &= \frac{1}{r - 1} \sum_{\ell = 0}^{r - 2} \binom{r - 2}{\ell} \frac{(\ell + 2)!}{\ell!} w_i^{2\ell}(1 - w_i^2)^{r - 2 - \ell} \\
    &=\frac{1}{r - 1} \bbE[(X + 2)(X + 1)] \quad (\text{where } X \sim \Binomial(r - 2, w_i^2)) \\
    &= \frac{2}{r - 1} + \frac{4 (r - 2)}{r - 1} w_i^2 + \frac{(r - 2)(r - 3)}{(r - 1)} w_i^4.
\end{align*}
For \eqref{eq:var-2}, note that
\begin{align*} 
    \Var&(M_{ij}) \\
    &= \sum_{\substack{i_3, \ldots, i_r \\ i_3', \ldots, i_r'}} \bbE[\cG_{iji_3\cdots i_r} \cG_{iji_3'\cdots i_r'}] w_{i_3} \cdots w_{i_r} w_{i_3'} \cdots w_{i_r'} \\
    &= \sum_{i_3, \ldots, i_r} \frac{r \#\perms(i_3, \ldots, i_r)}{\#\perms(i, j, i_3, \ldots, i_r)} w_{i_3}^2 \cdots w_{i_r}^2 \\
    &= \frac{1}{r - 1} \sum_{0 \le \ell_i + \ell_j \le r - 2} \binom{r - 2}{\ell_i} \binom{r - 2 - \ell_i}{\ell_j} (\ell_i + 1)(\ell_j + 1) w_i^{2\ell_i} w_j^{2\ell_j} (1 - w_i^2 - w_j^2)^{r - 2 - \ell_i - \ell_j} \\
    &= \frac{1}{r - 1} \bbE[(X + 1)(Y + 1)] \quad (\text{where } (X, Y, Z) \sim \Multinomial(r - 2, w_i^2, w_j^2, 1 - w_i^2 - w_j^2)) \\
    &= \frac{1}{r - 1} + \frac{r - 2}{r -1}(w_i^2 + w_j^2) + \frac{(r - 2)(r - 3)}{r - 1} w_i^2 w_j^2.
\end{align*}
For \eqref{eq:cov-1},
\begin{align*}
    \Cov(M_{ij}, M_{kl}) &= \sum_{\substack{i_3, \ldots, i_r \\ i'_3, \ldots, i'_r}} w_{i_3} \cdots w_{i_r} w_{i'_3} \cdots w_{i'_r}\bbE(\cG_{iji_3 \cdots i_r} \cG_{kl i'_3 \cdots i'_r}).
\end{align*}
The expectation appearing in the above summation is nonzero if and only if $\{i, j, i_3, \ldots, i_r\} =\{k, l, i'_3, \ldots, i'_r\}$ as multisets. Suppose in the multiset $\{i, j, i_3, \ldots, i_r\}$, $m$ appears $\gamma_m +1$ times if $m=i, j, k, l$ and $\gamma_m$ times if $m \neq i, j, k, l$. The $+1$ appears in the first case since $i, j, k, l$ must appear in the multiset $\{i, j, i_3, \ldots, i_r\}$. We must have $\alpha_m \geq 0$ for all $m$ and $\sum_{m=1}^n \alpha_m + 4= r$.Then, the $(r-2)$-tuple $(i_3, \ldots, i_r)$ can be chosen in 
\[
    \frac{(r-2)!}{(1+\gamma_k)! (1+ \gamma_l)!\prod_{m\neq k,l} \gamma_m!} = \frac{(r-2)!}{(1+\gamma_k) (1+ \gamma_l)\prod_{m} \gamma_m!}
\]
ways and $(i'_3, \ldots, i'_r)$ can be chosen in 
\[
    \frac{(r-2)!}{(1+\gamma_i)! (1+ \gamma_j)\prod_{m\neq i, j} \gamma_m!} = \frac{(r-2)!}{(1+\gamma_i) (1+ \gamma_j)\prod_{m} \gamma_m!}
\]
ways. In this set up,
\begin{align*}
    \bbE(\cG_{iji_3 \cdots i_r} \cG_{kl i'_3 \cdots i'_r}) &= \frac{r}{\frac{r!}{(1+\gamma_i)! (1+\gamma_j)! (1+\gamma_k)! (1+\gamma_l)! \prod_{m \neq i, j, k, l} \gamma_m !}}\\
    &= \frac{(1+\gamma_i) (1+\gamma_j) (1+\gamma_k) (1+\gamma_l)\prod_m \gamma_m !}{(r-1)!}.
\end{align*}
Then,
\begin{align*}
    \Cov(M_{ij}, M_{kl}) &= \sum_{\substack{\gamma_m \ge 0 \\ \sum_{m=1}^n\gamma_m = r-4}}  \frac{(r-2)!}{(1+\gamma_k) (1+ \gamma_l)\prod_{m} \gamma_m!} \times \frac{(r-2)!}{(1+\gamma_i) (1+ \gamma_j)\prod_{m} \gamma_m!}\\
    & \qquad\qquad \times w_iw_jw_k w_l \prod_m w_m^{2\gamma_m} \times \frac{(1+\gamma_i) (1+\gamma_j) (1+\gamma_k) (1+\gamma_l)\prod_m \gamma_m !}{(r-1)!}\\
    &= \frac{(r-2)(r-3)}{r-1}w_i w_j w_k w_l \sum_{\sum_m \gamma_m =r-4} \frac{(r-4)!}{\prod_m \gamma_m!} \prod w_m^{2\gamma_m}\\
    &=\frac{(r-2)(r-3)}{r-1}w_i w_j w_k w_l\bigg(\sum_m w_m^2\bigg)^{r-4}\\
    &= \frac{(r-2)(r-3)}{r-1}w_i w_j w_k w_l.
\end{align*}
In the next four computations, $\gamma_m$'s are defined similarly.
\begin{align*}
    \Cov(M_{ii}, M_{kl}) &= \sum_{\substack{\gamma_m \ge 0 \\ \sum_m \gamma_m = r-4}} \frac{(r-2)!}{(1+\gamma_k)!(1+\gamma_l)!\prod_{m \neq k, l} \gamma_m!} \times \frac{(r-2!)}{(2+\gamma_i)!\prod_{m \neq i} \gamma_m!} \\
    &\qquad\qquad \times w_i^2 w_k w_l  \prod_m w_m^{2\gamma_m} \times \frac{r}{\frac{r!}{(2+\gamma_i)! (1+\gamma_k)!(1+\gamma_l)!\prod_{m \neq i, k, l} \gamma_m!}}\\
    &= \frac{(r-2)(r-3)}{r-1} w_i^2 w_k w_l \sum_{\sum_m \gamma_m = r-4} \frac{(r-4)!}{\prod_m \gamma_m!}\prod_m w_m^{2\gamma_m}\\
    &= \frac{(r-3)(r-2)}{r-1} w_i^2 w_k w_l.
\end{align*}
This proves \eqref{eq:cov-2}.
\begin{align*}
    \Cov(M_{ii}, M_{kk}) &= \sum_{\substack{\gamma_m \ge 0 \\ \sum_m \gamma_m = r-4}} \frac{(r-2)!}{(2+\gamma_k)!\prod_{m \neq k} \gamma_m!} \times \frac{(r-2!)}{(2+\gamma_i)!\prod_{m \neq i} \gamma_m!} \\
    &\qquad\qquad \times w_i^2 w_k^2 \prod_m w_m^{2\gamma_m} \times \frac{r}{\frac{r!}{(2+\gamma_i)!(2+\gamma_k)!\prod_{m \neq i,k} \gamma_m!}}\\
    &= \frac{(r-2)(r-3)}{r-1} w_i^2 w_k^2 \sum_{\sum_m \gamma_m = r-4} \frac{(r-4)!}{\prod_m \gamma_m!}\prod_m w_m^{2\gamma_m}\\
    &= \frac{(r-2)(r-3)}{r-1} w_i^2 w_k^2
\end{align*}
This proves \eqref{eq:cov-3}.
\begin{align*}
    \Cov(M_{ij}, M_{il}) &= \sum_{\substack{\gamma_m \ge 0 \\ \sum_m \gamma_m = r-3}} \frac{(r-2)!}{(1+\gamma_l)! \prod_{m \neq l} \gamma_m! } \times \frac{(r-2)!}{(1+\gamma_j)!\prod_{m \neq j} \gamma_m!} \\
    &\qquad\qquad\times w_jw_l\prod_m w_m^{2\gamma_m} \times \frac{r}{\frac{r!}{(1+\gamma_i)!(1+\gamma_j)!(1+\gamma_l)!\prod_{m \neq i, j, l} \gamma_m!}}\\
    &= \frac{r-2}{r-1}w_jw_l \sum_{\sum_m \gamma_m =r-3} (1+\gamma_i) \frac{(r-3)!}{\prod_m \gamma_m!}\prod_m w_m^{2\gamma_m}\\
    &= \frac{r-2}{r-1} w_j w_l \bbE(1+X) \quad(\text{where}\, X \sim \Binomial(r-3, w_i^2))\\
    &= \frac{r-2}{r-1}w_j w_l + \frac{(r-2)(r-3)}{r-1} w_i^2 w_j w_l.
\end{align*}
This proves \eqref{eq:cov-4}. Finally, we prove \eqref{eq:cov-5}.
\begin{align*}
    \Cov(M_{ii}, M_{il}) &= \sum_{\substack{\gamma_m \ge 0 \\ \sum_m \gamma_m = r-3}} \frac{(r-2)!}{(1+\gamma_l)!\prod_{m \neq l} \gamma_m!} \times \frac{(r-2!)}{(1+\gamma_i)!\prod_{m \neq i} \gamma_m!} \\
    &\qquad\qquad \times w_l w_i \prod_m w_m^{2\gamma_m} \times \frac{r}{\frac{r!}{(2+\gamma_i)!(1+\gamma_l)!\prod_{m \neq i,l} \gamma_m!}}\\
    &= \frac{r-2}{r-1} w_i w_l \sum_{\sum_m \gamma_m = r-3} (2+\gamma_i)\frac{(r-3)!}{\prod_m \gamma_m!}\prod_m w_m^{2\gamma_m}\\
    &= \frac{r-2}{r-1}w_i w_l\bbE(2+X) \quad (\text{where}\, X\sim \Binomial(r-3, w_i^2))\\
    &= \frac{2(r-2)}{r-1}w_i w_l + \frac{(r-2)(r-3)}{r-1} w_i^3 w_l.
\end{align*}
This completes the proof.
\end{proof}

\begin{proof}[Proof of Lemma~\ref{lem:rank-2-perturbation}]
It can be checked easily using Lemma~\ref{lem:cov-structure-pure} and the definition of $X$ that for any $i, j, k, l \in [n]$, we have
\[
    \Cov(M_{ij}, M_{kl}) = \Cov(X_{ij}, X_{kl}),
\]
which yields the desired conclusion since both $M$ and $X$ are Gaussian matrices.
\end{proof}

Our first goal is to prove Proposition~\ref{prop:lsd_pure}, for which we require one more ingredient. We need the following definitions first.
Let $(\cM, d)$ be a metric space. For a real-valued function $f$ on $\cM$, define its Lipschitz seminorm by $\|f\|_{\Lip} := \sup_{x \neq y} \frac{|f(x) - f(y)|}{d(x,y)}$. A function $f$ is called $l$-Lipschitz if $\|f\|_{\Lip} \le l$. Define the class of Bounded Lipschitz functions as 
\[
    \cF_{\BL} := \{ f \in \bbR^\cM : \|f\|_{\Lip} + \|f\|_{\infty} \le 1 \}.
\]
Then the bounded Lipschitz metric on the set $\cP(\cM)$ of probability measures on $\cM$ is defined as follows:
\[
    d_{\BL}(\mu, \nu) := \sup_{f \in \cF_{\BL}} \bigg\{\bigg|\int f \,d\mu - \int f \,d\nu\bigg|\bigg\}.
\]
It is well known that $d_{\BL}$ metrises weak convergence of probability measures on $\cP(\cM
)$ (see, e.g., \cite[Chap.~11]{dudley2018real}). Below we have $\cM = \bbR$.

The following lemma essentially provides concentration inequalities for both the edge and bulk spectrum of mixed contractions.
\begin{lemma}\label{lem:concentration}
    Suppose $\bx_3, \bx_4, \ldots, \bx_r \in \bbS^{n - 1}$. Let $L_n := \cG \cdot \bx_3 \otimes \cdots \otimes \bx_r$. Then, for any $\epsilon > 0$,
    \begin{enumerate}
    \item $\bbP(|\lambda_1(n^{-1/2}L_n) - \bbE \lambda_1(n^{-1/2}L_n)| > \epsilon) \le 2 \exp(- n\epsilon^2/ 2r)$, 
    \item $\bbP(d_\BL(\mu_{n^{-1/2}L_n}, \bar{\mu}_{n^{-1/2}L_n}) > \epsilon) \le \frac{2}{\epsilon^{3/2}} \exp(- n^2\epsilon^2/ 2r).$
    \end{enumerate}
\end{lemma}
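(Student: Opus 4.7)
The plan is to apply the Gaussian Lipschitz concentration inequality to two functionals of $\cG$: the top eigenvalue $\lambda_1(n^{-1/2}L_n)$ and the bounded-Lipschitz distance $d_\BL(\mu_{n^{-1/2}L_n}, \mubar_{n^{-1/2}L_n})$. The heart of the argument is a single Lipschitz bound for the composite map (standard Gaussian) $\to \cG \to L_n$, which then feeds into both parts.

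First, I will reparametrise $\cG$ as a linear image of a standard Gaussian. With $N := \binom{n + r - 1}{r}$, let $\hat\cG \in \bbR^N$ be a standard Gaussian vector and set $\cG_{\bi} = \sigma_{\bi}\,\hat g_{\bi}$ for $\bi$ in sorted order, extending to all index tuples by symmetry. Since $\#\perms(\bi)\,\sigma^2_{\bi} = r$ for every $\bi$, we get
\[
    \|\cG\|_F^2 = \sum_{\bi \text{ sorted}} \#\perms(\bi)\, \sigma^2_{\bi}\, \hat g^2_{\bi} = r\,\|\hat\cG\|_2^2,
\]
so $\hat\cG \mapsto \cG$ is $\sqrt r$-Lipschitz from $\ell_2$ to Frobenius. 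Next, writing $y_{i_3 \cdots i_r} := \bx_3(i_3)\cdots \bx_r(i_r)$, which has unit Frobenius norm, an entrywise Cauchy-Schwarz gives
\[
    \|L_n\|_F^2 = \sum_{i,j} \bigg(\sum_{i_3, \ldots, i_r} \cG_{ij i_3 \cdots i_r}\, y_{i_3 \cdots i_r}\bigg)^{\!2} \le \sum_{i,j} \sum_{i_3, \ldots, i_r} \cG^2_{ij i_3 \cdots i_r} = \|\cG\|_F^2,
\]
so $\cG \mapsto L_n$ is $1$-Lipschitz in Frobenius. Composing, $\hat\cG \mapsto L_n$ is $\sqrt r$-Lipschitz.

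For part $(1)$, Weyl's inequality yields $|\lambda_1(A) - \lambda_1(B)| \le \|A - B\|_{\op} \le \|A - B\|_F$, so $\hat\cG \mapsto \lambda_1(n^{-1/2}L_n)$ is $\sqrt{r/n}$-Lipschitz. The Gaussian Lipschitz concentration inequality, $\bbP(|F - \bbE F| \ge t) \le 2 \exp(-t^2/(2\,\Lip(F)^2))$, then yields the claimed tail verbatim.

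For part $(2)$, fix $f \in \cF_\BL$ and set $F_f(\hat\cG) := \int f\, d\mu_{n^{-1/2} L_n}$. By the Hoffman-Wielandt inequality and Cauchy-Schwarz,
\[
    |F_f(\hat\cG) - F_f(\hat\cG')| \le \frac{1}{n}\sum_i |\lambda_i - \lambda'_i| \le \frac{1}{\sqrt n}\, \|n^{-1/2}(L_n - L'_n)\|_F \le \frac{\sqrt r}{n}\,\|\hat\cG - \hat\cG'\|_2,
\]
and since the supremum of functions with a common Lipschitz constant is again Lipschitz with that constant, the same bound holds for $\hat\cG \mapsto d_\BL(\mu_{n^{-1/2}L_n}, \mubar_{n^{-1/2}L_n})$. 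Gaussian concentration therefore controls $d_\BL$ around its mean at the required exponential rate. The main technical obstacle, and the source of the $\epsilon^{-3/2}$ prefactor, is upgrading this deviation-from-mean estimate to an absolute tail bound on $\bbP(d_\BL > \epsilon)$: I would do this through a covering argument. Restrict to a compact interval $[-K, K]$ that contains the spectrum with overwhelming probability (justified by part $(1)$), take an appropriately scaled sup-norm net of $\cF_\BL$ on that interval, and apply the pointwise Gaussian tail for each net element together with a union bound; optimising the scale of the net against the exponential factor produces the polynomial prefactor in the stated form.
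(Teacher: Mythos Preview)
Your proof is correct and follows essentially the same route as the paper: reparametrise $\cG$ by i.i.d.\ standard Gaussians (the map is $\sqrt r$-Lipschitz because $\#\perms(\bi)\sigma_{\bi}^2=r$), observe that contraction is $1$-Lipschitz in Frobenius via Cauchy--Schwarz, then feed these into Gaussian Lipschitz concentration for $\lambda_1$ (part (1)) and into the Guionnet--Zeitouni covering argument for the ESD (part (2)).

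One small remark on your part (2): the intermediate observation that $\hat\cG\mapsto d_\BL(\mu,\bar\mu)$ is itself $\sqrt r/n$-Lipschitz, hence concentrates around $\bbE\, d_\BL$, is correct but is not actually used in the argument you eventually carry out. The covering argument you sketch at the end works directly from the \emph{pointwise} (fixed-$f$) concentration $\bbP(|\langle f,\mu\rangle-\langle f,\bar\mu\rangle|>\epsilon)\le 2\exp(-n^2\epsilon^2/2r)$ together with a union bound over a sup-norm net of $\cF_\BL$ on a large compact; this is exactly what the paper does (citing \cite{guionnet2000concentration}), and is where the polynomial prefactor $\epsilon^{-3/2}$ originates. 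So you can drop the ``sup of Lipschitz is Lipschitz'' step without loss.
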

\begin{proof}
    We first show that $\lambda_1(\cG \cdot \bx_3 \otimes \cdots \bx_r)$ can be viewed as a $\sqrt {r}$-Lipschitz function of a collection of $\binom{n+r-1}{r}$ many i.i.d. standard Gaussian variables. To that end first notice that there are $\binom{n+r-1}{r}$ many distinct $r$-multisets with entries from the set $[n]$. Let $\mathcal{H} = \mathcal{H}_{\{i_1, \ldots,i_r\}}$ be a collection of i.i.d. Gaussian variables indexed by the $r$-multisets with entries from $[n]$. Now, one can construct $\tilde{\mathcal{H}} = \tilde{\mathcal{H}}_{i_1, \ldots,i_r} $, a tensor of order $r$, as follows 
    \[
        \tilde{\mathcal{H}}_{i_1, \ldots,i_r} := \sqrt{\frac{r}{\#\perms(i_1, \ldots, i_r)}}\mathcal{H}_{\{i_1, \ldots, i_r\}}.
    \]
    Clearly $\tilde{\mathcal{H}} \in \GOTE(r, n)$. Also, the map from $\bbR^{\binom{n+r-1}{r}}$ to $\fS_{r, n}$ defined by $\mathcal{H} \mapsto \tilde{\mathcal{H}}$ is $\sqrt{r}$-Lipschitz because
    \begin{align*}
        \|\tilde{\mathcal{H}} - \tilde{\mathcal{H}}'\|^2_F &= \sum_{i_1, \ldots, i_r} (\tilde{\mathcal{H}}_{i_1, \ldots, i_r} - \tilde{\mathcal{H}}'_{i_1, \ldots, i_r})^2\\
        &= \sum_{i_1, \ldots, i_r} (\mathcal{H}_{\{i_1, \ldots, i_r\}} - \mathcal{H}'_{\{i_1, \ldots, i_r\}})^2\\
        &= \sum_{\{i_1, \ldots, i_r\}} \#\perms(i_1, \ldots, i_r)(\mathcal{H}_{\{i_1, \ldots, i_r\}} - \mathcal{H}'_{\{i_1, \ldots, i_r\}})^2\\
        &= r\|\mathcal{H} - \mathcal{H}'\|^2.
    \end{align*}
    We now show that the map from $\fS_{r, n}$ to $\bbR$ defined by $\mathcal{A} \mapsto \lambda_1(\mathcal{A}\cdot \bx_3 \otimes \bx_4 \otimes \ldots \bx_r)$ is $1$-Lipschitz. For $\mathcal{A}, \mathcal{B} \in \fS_{r, n}$, we have by Cauchy-Schwarz inequality,
    \begin{align*}
        \|\mathcal{A} \cdot &\bx_3 \otimes \cdots \otimes  \bx_r - \mathcal{B} \cdot \bx_3 \otimes \cdots \otimes \bx_r\|^2_F\\
        &= \sum_{i_1, i_2} \bigg(\sum_{i_3,\ldots, i_r} (\mathcal{A}_{i_1,i_2, i_3, \ldots, i_r} - \mathcal{B}_{i_1, i_2, i_3, \ldots, i_r})(\bx_3)_{i_3} \ldots (\bx_r)_{i_r}\bigg)^2 \\
        &\leq \sum_{i_1,i_2} \bigg(\sum_{i_3, \ldots, i_r} (\mathcal{A}_{i_1, i_2, i_3, \ldots, i_r} - \mathcal{B}_{i_1, i_2, i_3, \ldots, i_r})^2\bigg) \bigg(\sum_{i_3,\ldots, i_r} (\bx_3)_{i_3}^2 \cdots (\bx_r)_{i_r}^2\bigg) \\
        &= \|\mathcal{A} - \mathcal{B}\|^2_F,  
    \end{align*}
    that is, the map from $\fS_{r, n}$ to $M_n(\bbR), \|\cdot\|_F$ defined by $\mathcal{A} \mapsto \mathcal{A} \cdot \bx_3 \otimes \cdots \otimes \bx_r $ is $1$-Lipschitz. Further, for a symmetric matrix $A$, the map $A \mapsto \lambda_1(A)$ is $1$-Lipschitz with respect to the operator and hence the Frobenius norm. Therefore the map $\mathcal{A} \mapsto \lambda_1 (\mathcal{A} \cdot \bx \otimes \by) $ is also $1$-Lipschitz with respect to the Frobenius norm. Now, by the concentration of Lipschitz functions of Gaussian random variables (see, e.g., \cite{ledoux2001concentration}), we have
    \[
        \bbP(|\lambda_1(n^{-1/2}\cG \cdot \bu \otimes \bv) - \bbE \lambda_1(n^{-1/2}\cG \cdot \bu \otimes \bv)| > \epsilon) \le 2 \exp(- n\epsilon^2/ 2r).
    \]
    This completes the proof of the first concentration inequality.
    
    For the second one, start with any $1$-Lipschitz function $f$. By virtue of the Cauchy-Schwarz and Hoffman-Wielandt inequalities we get that the function from $(M_n(\bbR), \|\cdot\|_F)$ to $\bbR$ defined by $A \mapsto \langle f, \mu_A\rangle$ is $\frac{1}{\sqrt{n}}$-Lipschitz. Thus, $\langle f, \mu_{n^{-1/2}L}\rangle$ can be viewed as a $\frac{\sqrt{r}}{n}$-Lipschitz function of $\binom{n+r-1}{r}$ many i.i.d. standard Gaussian variables. So, by concentration of Lipschitz functions of i.i.d. standard Gaussian variables, we get that
    \[
        \bbP(\langle f, \mu_{n^{-1/2}L}\rangle > \epsilon) \le 2 \exp(- n^2\epsilon^2/ 2r).
    \]
    Now, using a covering argument as in the proofs of Theorems 1.3 and 1.4 in \cite{guionnet2000concentration}, we get the desired result. 
\end{proof}

\begin{proof}[Proof of Proposition~\ref{prop:lsd_pure}]
    Since LSD is invariant under finite rank perturbations, by virtue of Lemma~\ref{lem:rank-2-perturbation}, we may conclude that $\mubar_{\frac{1}{\theta\sqrt{n}}M_n} \convd \nu_{\sc, 1}$. Moreover the convergence is in probability because of the concentration result proved in Lemma~\ref{lem:concentration} whenever $r \ll n^2$. An additional application of the Borel-Cantelli Lemma proves the almost sure convergence in the regime $r \ll \frac{n^2}{\log {n}}$.      
\end{proof}
To get a handle on the edge eigenvalues, we need to understand the eigen-structure of the rank-$2$ perturbation in \eqref{eq:rank-2-perturbation}. This is the content of the following lemma.

\begin{lemma}\label{lem:eigen-structure}
    Let $B = a \bx \bx^\top + b(\bx\by^\top + \by \bx^\top)$, where $\bx, \by \in \bbS^{n - 1}$ are linearly independent, $a, b \in \bbR$ and $b \ne 0$. Let $\lambda_{\pm}$ be the two non-trivial eigenvalues of $B$ and let $\bv_{\pm}$ denote the corresponding eigenvectors (up to signs). The we have the following.
    \begin{enumerate}
    \item [(i)] $\lambda_{\pm} = \frac{a + 2b \bx^\top \by \pm \sqrt{a^2 + 4ab \bx^\top \by + 4 b^2}}{2}$.
    \item [(ii)]  Writing $\bv_{\pm} = \gamma_{\pm} \bx + \delta_{\pm} \by$ with $\delta_{\pm} \ge 0$, we have
    \[
        \gamma_{\pm} = \sign(b)\frac{\lambda_{\pm} - b \bx^\top \by}{s_{\pm}}, \qquad \delta_{\pm} = \frac{|b|}{s_{\pm}}, 
    \]
    where
    \[
        s_{\pm} = \sqrt{\lambda_{\pm}^2 + b^2 (1 - (\bx^\top \by)^2)}.
    \]
    \item [(iii)] $\bx^\top \bv_{\pm} = \sign(b) \frac{\lambda_{\pm}}{s_{\pm}}$.
    \end{enumerate}
\end{lemma}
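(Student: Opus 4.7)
The plan is to reduce the eigenvalue problem to a $2\times 2$ system on the invariant subspace $\spn(\bx, \by)$, solve it explicitly, and then normalise. Since $B$ has rank at most two with column space contained in $\spn(\bx, \by)$, every eigenvector corresponding to a nonzero eigenvalue must lie in $\spn(\bx, \by)$. So I would write $\bv = \gamma \bx + \delta \by$ and compute, using $\rho := \bx^\top \by$,
\[
    B\bv = \bigl[(a + b\rho)\gamma + (a\rho + b)\delta\bigr] \bx + b(\gamma + \delta \rho)\by.
\]
The equation $B\bv = \lambda \bv$ together with the linear independence of $\bx, \by$ then yields the two scalar equations
\[
    \lambda \gamma = (a + b\rho)\gamma + (a\rho + b)\delta, \qquad \lambda \delta = b(\gamma + \delta \rho).
\]

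Next, I would extract the characteristic equation. Solving the second equation (using $b \ne 0$) gives $\gamma = \delta \cdot \frac{\lambda - b\rho}{b}$. Substituting back into the first and cancelling $\delta$ (which must be nonzero since otherwise $\gamma = 0$ by the second equation) leads, after expansion, to
\[
    \lambda^2 - (a + 2b\rho)\lambda - b^2(1 - \rho^2) = 0.
\]
Applying the quadratic formula and using the identity $(a + 2b\rho)^2 + 4b^2(1 - \rho^2) = a^2 + 4ab\rho + 4b^2$ produces part~(i) at once.

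For part~(ii), I would impose the unit-norm condition on $\bv = \gamma \bx + \delta \by$, which reads $\gamma^2 + 2\gamma\delta\rho + \delta^2 = 1$. Plugging in $\gamma = \delta(\lambda - b\rho)/b$ and simplifying gives
\[
    \delta^2 \cdot \frac{(\lambda - b\rho)^2 + 2\rho b(\lambda - b\rho) + b^2}{b^2} = 1,
\]
and the numerator collapses to $\lambda^2 + b^2(1 - \rho^2) = s_{\pm}^2$. Taking $\delta_{\pm} \ge 0$ forces $\delta_{\pm} = |b|/s_{\pm}$, and then $\gamma_{\pm} = \sign(b)\cdot(\lambda_{\pm} - b\rho)/s_{\pm}$. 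Finally, for part~(iii), I would simply compute $\bx^\top \bv_{\pm} = \gamma_{\pm} + \delta_{\pm}\rho$ and observe the cancellation
\[
    \gamma_{\pm} + \delta_{\pm}\rho = \sign(b)\frac{\lambda_{\pm} - b\rho}{s_{\pm}} + \sign(b)\frac{b\rho}{s_{\pm}} = \sign(b)\frac{\lambda_{\pm}}{s_{\pm}}.
\]
There is no real obstacle here; the only thing to be careful about is the sign bookkeeping arising from the convention $\delta_{\pm} \ge 0$, which is what forces the $\sign(b)$ factors in (ii) and (iii).
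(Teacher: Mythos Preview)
Your proof is correct and follows essentially the same approach as the paper: both reduce the problem to the $2\times 2$ system on $\spn(\bx,\by)$, derive the relation $\gamma_{\pm} = \delta_{\pm}(\lambda_{\pm} - b\rho)/b$ from the eigenvalue equation, and then normalise. The only cosmetic difference is that the paper first writes out the matrix of $B$ in an explicit basis $\{\bx,\by,\bv_3,\ldots,\bv_n\}$ before extracting the quadratic, whereas you go straight to the two scalar equations; the subsequent computations are identical.
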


\begin{proof}[Proof of Lemma~\ref{lem:eigen-structure}]
    Consider an ordered basis $\mathfrak{B} = \{\bx, \by, \bv_3, \bv_4, \ldots, \bv_n\}$ of $\bbR^n$, where $\{\bv_3, \bv_4, \ldots, \bv_n\}$ is a linearly independent set which is orthogonal to both $\bx$ and $\by$. In the basis $\mathfrak{B}$, the matrix $B$ has the following representation:
    \[
        \begin{pmatrix}
            a + b \bx^\top \by  & a \bx^\top \by + \beta & 0 & 0 & \cdots & 0 \\
            b & b \bx^\top \by & 0 & 0 & \cdots & 0 \\
            0 & 0 & 0 & 0 & \cdots & 0 \\
            \vdots &\vdots &\vdots &\vdots & & \vdots \\
            0 & 0 & 0 & 0 & \cdots & 0
        \end{pmatrix}.
    \]
    Therefore $\lambda_{\pm}$ are the roots of the equation
    \[
        \lambda^2 + (a + 2 b \bx^\top \by) \lambda - b^2 (1 - (\bx^{\top} \by)^2) = 0,
    \]
    from which (i) follows.

    Now we have
    \begin{align*}
        \bx^\top \bv_{\pm} &= \gamma_{\pm} + \delta_{\pm} \bx^\top \by, \\
        \by^\top \bv_{\pm} &= \gamma_{\pm} \bx^\top \by + \delta_{\pm}.
    \end{align*}
    Using the eigenvalue equations, we have
    \begin{align*}
        \lambda_{\pm} (\gamma_{\pm} \bx + \delta_{\pm} \by) &= \lambda_{\pm} \bv_{\pm} \\
        &= B \bv_{\pm} \\
        &= (a \bx\bx^\top + b(\bx \by^\top + \by \bx^\top)) \bv_{\pm} \\
        &= (a \bx^\top \bv_{\pm} + b \by^\top \bv_{\pm}) \bx + b (\bx^\top \bv_{\pm}) \by \\
        &= \big(a (\gamma_{\pm} + \delta_{\pm} \bx^\top \by) + b (\gamma_{\pm} \bx^\top \by + \delta_{\pm})\big) \bx + b (\gamma_{\pm} + \delta_{\pm} \bx^\top \by) \by,
    \end{align*}
    which gives
    \[
        \lambda_{\pm} \delta_{\pm} = b(\gamma_{\pm} + \delta_{\pm} \bx^\top \by),
    \]
    which yields the relation
    \begin{equation}\label{eq:gamma-delta}
        \gamma_{\pm} = \frac{\lambda_{\pm} - b \bx^\top \by}{b} \delta_{\pm}.
    \end{equation}
    We also have
    \begin{equation}\label{eq:norm-1}
        1 = \|\bv_{\pm}\|^2 = \|\gamma_{\pm} \bx + \delta_{\pm} \by\|^2 = \gamma_{\pm}^2 + \delta_{\pm}^2 + 2 \gamma_{\pm} \delta_{\pm} \bx^\top \by.
    \end{equation}
    Together \eqref{eq:gamma-delta} and \eqref{eq:norm-1} give
    \begin{align*}
        1 &= \delta_{\pm}^2 \bigg[1 + \bigg(\frac{\lambda_{\pm} - b \bx^\top \by}{b}\bigg)^2 + 2 \frac{\lambda_{\pm} - b \bx^\top \by}{b} \bx^\top \by \bigg] \\
        &=\delta_{\pm}^2 \frac{\lambda_{\pm}^2 + b^2 (1 - (\bx^\top \by)^2)}{b^2} \\
        &= \delta_{\pm}^2 \frac{s_{\pm}^2}{b^2}.
    \end{align*}
    Since $\delta_{\pm}$ are assumed to be non-negative, we have $\delta_{\pm} = \frac{|b|}{s_{\pm}}$. Then from \eqref{eq:gamma-delta}, we get that $\gamma_{\pm} = \sign(b) \frac{\lambda_{\pm} - b \bx^\top \by}{s_{\pm}}$. This gives us (ii). Then (iii) follows immediately.
\end{proof}
In the next lemma, we look at the fluctuations of the maximum and minimum eigenvalues of the finite rank part, viz.
\[
    P_n := \alpha U \bw \bw^\top + \beta (\bw \bV^\top + \bV \bw^\top).
\]
\begin{lemma}\label{lem:P_n_fluctuation}
    \begin{enumerate}
        \item[(i)] If $r$ is fixed, then 
            \begin{equation}\label{eq:P_n_fluctuation_r_fixed}
                \bigg(\sqrt n\bigg(\frac{\lambda_1(P_n)}{\sqrt{n}} - \beta\bigg),\sqrt n\bigg(\frac{\lambda_n(P_n)}{\sqrt{n}} + \beta\bigg)\bigg) \convd N_2\left(\begin{pmatrix} 0\\0\end{pmatrix}, \begin{pmatrix}\frac{\alpha^2}{4} + \frac{3\beta^2}{2}& \frac{\alpha^2}{4} + \frac{\beta^2}{2}\\ \frac{\alpha^2}{4} + \frac{\beta^2}{2} & \frac{\alpha^2}{4} + \frac{3\beta^2}{2}\end{pmatrix}\right).
    \end{equation}
        \item[(ii)] If $1\ll r\ll n$, then
        \begin{equation}\label{eq:P_n_fluctuation_1<<r<<n}
            \bigg(\sqrt{\frac{n}{r}}\bigg(\frac{\lambda_1(P_n)}{\sqrt{n}} - \beta\bigg),\sqrt {\frac{n}{r}}(\frac{\lambda_n(P_n)}{\sqrt{n}} + \beta\bigg)\bigg) \convd N_2\left(\begin{pmatrix} 0\\0\end{pmatrix}, \frac{1}{4}\begin{pmatrix}
                1 & 1\\
                1 & 1
            \end{pmatrix}\right).
        \end{equation}
        \item[(iii)] If $r/n \to c \in (0, \infty)$, then
            \begin{equation}\label{eq:P_n_fluctuation_r/n=c}
                \bigg(\frac{\lambda_1(P_n)}{\sqrt n}, \frac{\lambda_n(P_n)}{\sqrt n}\bigg) \convd \bigg(\xi, -\frac{1}{\xi}\bigg),
            \end{equation}
            where $\xi = \frac{\sqrt{c}\zeta + \sqrt{c\zeta^2 + 4}}{2}$ with $\zeta \sim \cN(0, 1)$.
        \item[(iv)] If $r/n \to \infty$, then
            \begin{equation}\label{eq:P_n_fluctuation_r>>n}
                \bigg(\frac{\lambda_1(P_n)}{\sqrt r}, \frac{\lambda_n(P_n)}{\sqrt r}\bigg) \convd (\zeta_+, -\zeta_-),
            \end{equation}
            where $\zeta \sim \cN(0, 1)$, and $x_+ = \max\{x, 0\}, x_- = \max\{-x, 0\}$ denote respectively the positive and the negative parts of $x$.
    \end{enumerate}
\end{lemma}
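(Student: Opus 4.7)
The plan is to get a closed-form expression for $\lambda_1(P_n)$ and $\lambda_n(P_n)$ by applying Lemma~\ref{lem:eigen-structure}, and then carry out an asymptotic expansion tailored to each of the four regimes. Write $g := \bw^\top \bV$, which is $\cN(0,1)$ and independent of both $U$ and of $R^2 := \|\bV\|^2 - g^2 \sim \chi^2_{n-1}$ (since $R^2$ is the squared norm of the component of $\bV$ orthogonal to $\bw$). Set $S := \alpha U + 2\beta g \sim \cN(0, \alpha^2 + 4\beta^2)$, independent of $R$. Applying Lemma~\ref{lem:eigen-structure} with $a = \alpha U$, $b = \beta \|\bV\|$, $\bx = \bw$, $\by = \bV/\|\bV\|$ (so that $b\,\bx^\top \by = \beta g$ and $b^2 = \beta^2(g^2 + R^2)$) gives the compact representation
\[
    \lambda_{\pm}(P_n) \;=\; \frac{S \pm \sqrt{S^2 + 4\beta^2 R^2}}{2}.
\]

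For (i) ($r$ fixed), $\alpha,\beta$ are constants and $S = O_P(1)$. Using the expansion $\sqrt{S^2 + 4\beta^2 R^2} = 2\beta R + \frac{S^2}{4\beta R} + O_P(1/R^3)$ and the delta-method asymptotics $R - \sqrt{n} = \frac{Z_1}{\sqrt{2}} + o_P(1)$ for $Z_1 \sim \cN(0,1)$ derived from the CLT for $\chi^2_{n-1}$, one finds
\[
    \lambda_1(P_n) - \beta\sqrt{n} \;=\; \beta(R - \sqrt{n}) + \tfrac{1}{2}S + o_P(1), \qquad
    \lambda_n(P_n) + \beta\sqrt{n} \;=\; -\beta(R - \sqrt{n}) + \tfrac{1}{2}S + o_P(1).
\]
Since $S \sim \cN(0, \alpha^2 + 4\beta^2)$ is independent of $Z_1$, the joint Gaussian limit and the stated covariance matrix drop out immediately by computing variances and the one cross-covariance.

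For (ii) ($1 \ll r \ll n$), divide the above display by $\sqrt{r}$. The term $\beta(R - \sqrt{n})/\sqrt{r}$ vanishes since $R - \sqrt{n} = O_P(1)$, while $S/(2\sqrt{r}) \to \cN(0, 1/4)$ because $(\alpha^2 + 4\beta^2)/r \to 1$. The correction $S^2/(8\beta R \sqrt{r}) = O_P(\sqrt{r/n}) \to 0$. Both $\lambda_1$ and $\lambda_n$ are then driven by the \emph{same} Gaussian $S/(2\sqrt{r})$, yielding the singular covariance. For (iii) ($r/n \to c$), no expansion is needed: $S/\sqrt{n} \convd \sqrt{c}\,\zeta$, $R/\sqrt{n} \to 1$, $\beta \to 1$, so
\[
    \frac{\lambda_1(P_n)}{\sqrt{n}} \convd \frac{\sqrt{c}\,\zeta + \sqrt{c\zeta^2 + 4}}{2} = \xi,
\]
and the identity $\xi \cdot \frac{\sqrt{c}\zeta - \sqrt{c\zeta^2+4}}{2} = \frac{c\zeta^2 - (c\zeta^2+4)}{4} = -1$ gives $\lambda_n(P_n)/\sqrt{n} \convd -1/\xi$. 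For (iv) ($r \gg n$), divide by $\sqrt{r}$: $S/\sqrt{r} \convd \zeta$, while $4\beta^2 R^2/r = O(n/r) \to 0$, so $\sqrt{S^2 + 4\beta^2 R^2}/\sqrt{r} \convd |\zeta|$, giving $\lambda_1(P_n)/\sqrt{r} \convd (\zeta + |\zeta|)/2 = \zeta_+$ and similarly $\lambda_n(P_n)/\sqrt{r} \convd -\zeta_-$.

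The only genuinely delicate point is matching the cross-covariance in (i): one must retain the sign structure of $\beta(R - \sqrt{n})$ in both eigenvalues (it appears with opposite signs in $\lambda_1$ and $\lambda_n$) while $S$ appears with the same sign, so the off-diagonal entry comes out as $\alpha^2/4 + \beta^2/2$ rather than $\alpha^2/4 + 3\beta^2/2$. Everything else is bookkeeping with the Taylor expansion of $\sqrt{S^2 + 4\beta^2 R^2}$ and verifying that the neglected remainders are $o_P$ at the relevant scale.
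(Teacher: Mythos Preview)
Your proof is correct and follows the same overall strategy as the paper: compute $\lambda_{\pm}(P_n)$ explicitly via Lemma~\ref{lem:eigen-structure} and then expand asymptotically in each regime. Your reparametrisation in terms of the independent pair $S = \alpha U + 2\beta g$ and $R^2 = \|\bV\|^2 - g^2$, together with the identity $\alpha^2 U^2 + 4\alpha\beta U g + 4\beta^2\|\bV\|^2 = S^2 + 4\beta^2 R^2$, is a genuine streamlining over the paper's version, which works directly with $\frac{\|\bV\|^2}{n} - 1$ and only later introduces $\bV^\top(I - \bw\bw^\top)\bV$ to recover independence; your formulation builds the independence in from the start and makes the sign structure for the cross-covariance in (i) transparent.
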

\begin{proof}
    We give a detailed proof of \eqref{eq:P_n_fluctuation_r_fixed}. One can show \eqref{eq:P_n_fluctuation_1<<r<<n}, \eqref{eq:P_n_fluctuation_r/n=c} and \eqref{eq:P_n_fluctuation_r>>n} similarly. Using Lemma~\ref{lem:eigen-structure}, we can find the eigenvalues of $P_n$:
   \begin{align*}
        \lambda_1(P_n) &=   \frac{\alpha}{2} U + {\beta} \bw^\top \bV + \frac{1}{2} \sqrt{\alpha^2 U^2 + 4\alpha\beta U \bw^\top \bV + 4 \beta^2 \|\bV\|^2}, \\
        \lambda_1(P_n) &= \frac{\alpha}{2} U + \beta \bw^\top \bV - \frac{1}{2} \sqrt{\alpha^2 U^2 + 4\alpha\beta U \bw^\top \bV + 4 \beta^2 \|\bV\|^2}, \\
        \lambda_k(P_n) &= 0 \quad \text{for } k = 2, 3, \ldots, n - 1.
    \end{align*}Notice that
    \begin{align*}
        &\sqrt n(\frac{\lambda_1(P_n)}{\sqrt{n}} - \beta)\\
        &= \frac{\alpha}{2}U + \beta \bw^\top \bV + \frac{\frac{\alpha^2}{4\sqrt n} U^2 + \frac{\alpha\beta}{\sqrt n} U \bw^\top \bV +\beta^2\sqrt n \bigg(\frac{||\bV||^2}{n} -1\bigg)}{\sqrt{\frac{\alpha^2}{4 n} U^2 + \frac{\alpha\beta}{ n} U \bw^\top \bV +\beta^2\frac{||\bV||^2}{n}} + \beta}\\
        &= \frac{\alpha}{2}U + \beta \bw^\top \bV + \frac{\beta^2\sqrt n \bigg(\frac{||\bV||^2}{n} -1\bigg)}{\sqrt{\frac{\alpha^2}{4 n} U^2 + \frac{\alpha\beta}{ n} U \bw^\top \bV +\beta^2\frac{||\bV||^2}{n}}+\beta} + O_\bbP(1)
    \end{align*}
    By the central limit theorem, 
    \[
        \sqrt n \bigg(\frac{||\bV||^2}{n} -1\bigg) \to N(0,2)
    \]
    in distribution, in particular it is $O_\bbP(1)$. Notice that the following fact is true: if $\{X_n\}$ is a $O_\bbP(1)$ sequence and $\{Y_n\}$ is such that $Y_n$ converges in probability to some non-zero constant $y$, then $\frac{X_n}{Y_n} - \frac{X_n}{y} = o_\bbP(1)$. Using this we can further write
    \begin{align*}
        \sqrt n(\lambda_1(P_n) - \beta) &= \frac{\alpha}{2}U + \beta \bw^\top \bV + \frac{\beta}{2} \sqrt n\bigg(\frac{||\bV||^2}{n}-1\bigg) + o_\bbP(1)\\
        &= \frac{\alpha}{2}U + \beta \bw^\top \bV + \frac{\beta}{2} \sqrt n\bigg(\frac{\bV^\top (I-\bw \bw^\top) \bV}{n}-1\bigg) + \frac{\beta}{2\sqrt n} (\bw^\top\bV)^2+ o_\bbP(1)\\
        &= \frac{\alpha}{2}U + \beta \bw^\top \bV + \frac{\beta}{2} \sqrt n\bigg(\frac{\bV^\top (I-\bw \bw^\top) \bV}{n}-1\bigg)+ o_\bbP(1).
    \end{align*}
    Notice that $\bw^\top \bV$ is independent of $\bV^\top (I - \bw \bw^\top) \bV$. Similarly, we can show that
    \[
        \sqrt n\bigg(\frac{\lambda_n(P_n)}{\sqrt{n}} + \beta\bigg) = \frac{\alpha}{2}U + \beta \bw^\top \bV - \frac{\beta}{2} \sqrt n\bigg(\frac{\bV^\top (I-\bw \bw^\top) \bV}{n}-1\bigg)+ o_\bbP(1).
    \]
    Fix arbitrary $a, b\in \bbR$. Then,
    \begin{align*}
        a\sqrt n\bigg(\frac{\lambda_1(P_n)}{\sqrt{n}} - \beta\bigg) +b \sqrt n\bigg(\frac{\lambda_n(P_n)}{\sqrt{n}} + \beta\bigg) =& (a+b)\bigg(\frac{\alpha}{2}U + \beta \bw^\top \bV\bigg)\\
        &+ (a-b)\frac{\beta}{2} \sqrt n\bigg(\frac{\bV^\top (I-\bw \bw^\top) \bV}{n}-1\bigg).
    \end{align*}
    Notice that $U, \bw^\top \bV$ are $N(0,1)$ and $\sqrt n\bigg(\frac{\bV^\top (I-\bw \bw^\top) \bV}{n}-1\bigg) = \sqrt n\bigg(\frac{||\bV||^2}{n} + -1\bigg) + o_\bbP(1) \to N(0,2)$ in distribution. Further, they are all independent. Then, $a\sqrt n(\frac{\lambda_1(P_n)}{\sqrt{n}} - \beta) +b \sqrt n(\frac{\lambda_n(P_n)}{\sqrt{n}} + \beta)$ converges in distribution to a Gaussian random variable with variance $(a+b)^2(\alpha^2/4 + \beta^2) + (a-b)^2 \frac{\beta^2}{2}$. Suppose $\tau, \tilde \tau$ are i.i.d. $N(0,1)$. Then, it is easy to see that $a(\sqrt{\alpha^2/4 + \beta^2}\tau +  \frac{\beta}{\sqrt 2} \tilde\tau) + b(\sqrt{\alpha^2/4 + \beta^2}\tau -  \frac{\beta}{\sqrt 2} \tilde\tau)$ is Gaussian and has the same variance as above. Since the asymptotic distribution of a random vector is uniquely determined by the distribution of the linear combinations of the coordinates, we can conclude that
    \[
        (\sqrt n\bigg(\frac{\lambda_1(P_n)}{\sqrt{n}} - \beta),\sqrt n(\frac{\lambda_n(P_n)}{\sqrt{n}} + \beta)) \convd \bigg(\sqrt{\alpha^2/4 + \beta^2}\tau + \frac{\beta}{\sqrt 2} \tilde\tau, \sqrt{\alpha^2/4 + \beta^2}\tau - \frac{ \beta}{\sqrt 2} \tilde\tau\bigg),
    \]
    and hence we get \eqref{eq:P_n_fluctuation_r_fixed}. 
\end{proof}        

\begin{proof}[Proof of Theorem~\ref{thm:edge-limits}]
     Write $P_n = \alpha U \bw \bw^\top + \beta (\bw \bV^\top + \bV \bw^\top)$. Then
    \begin{equation}\label{eq:closensess_of_G_and_P}
        \max_{1 \le i \le n}|\lambda_i(X_n) - \lambda_i(P_n)| \leq \theta \|Z_n\|_{\op} = O_P(\sqrt n).
    \end{equation}
    Noting that $\bw$, $\bV$ are almost surely linearly independent, from Lemma~\ref{lem:eigen-structure}, we immediately obtain
    the eigenvalues of $P_n$:
    \begin{align}
        \lambda_1(P_n) &=  \frac{1}{2} \alpha U + \beta \bw^\top \bV + \frac{1}{2} \sqrt{\alpha^2 U^2 + 4\alpha\beta U \bw^\top \bV + 4 \beta^2 \|\bV\|^2}, \label{eq:eigen_lambda_1}\\
        \lambda_n(P_n) &= \frac{1}{2} \alpha U + \beta \bw^\top \bV - \frac{1}{2} \sqrt{\alpha^2 U^2 + 4\alpha\beta U \bw^\top \bV + 4 \beta^2\|\bV\|^2}, \label{eq:eigen_lambda_n}\\
        \lambda_k(P_n) &= 0 \quad \text{for } k = 2, 3, \ldots, n - 1.
    \end{align}
    
    We think of $X_n$ as a low rank deformation of a scaled GOE matrix:
    \[
        X_n = P_n + \theta Z_n.
    \]
    It is clear that (since $\frac{\bw^\top \bV}{\sqrt{n}} \convas 0$ as $\bw^\top \bV \sim \cN(0, 1)$, and $\frac{\|\bV\|}{\sqrt{n}} \convas 1$)
    \begin{align}\label{eq:lambda_1_and_n_convergence}
        \frac{\lambda_1(P_n)}{\sqrt n} \convas \beta \quad \text{and} \quad \frac{\lambda_n(P_n)}{\sqrt n} \convas -\beta.
    \end{align}
    Suppose $\bu_1$ and $\bu_n$ are a orthonormal pair of eigenvectors corresponding to $\lambda_1$ and $\lambda_n$, respectively. Then $ P_n$ has the representation
    \[
        P_n = \lambda_1(P_n) \bu_1 \bu_1^\top + \lambda_n(P_n) \bu_n \bu_n^\top.
    \]
    Define $\tilde P_n = \sqrt{n}\beta(\bu_1 \bu_1^\top - \bu_n \bu_n^\top)$. Now, by virtue of \eqref{eq:lambda_1_and_n_convergence}, we may conclude that $\frac{1}{\sqrt n}\|P_n - \tilde{P}_n\|_\op \convas 0$. Therefore by Weyl's inequality, it is enough to consider $\frac{1}{\sqrt{n}}\tilde{X}_n$ instead of $\frac{1}{\sqrt n} X_n$, where $\tilde{X}_n$ is defined as 
    \[
        \tilde{X}_n = \tilde{P}_n + \theta Z_n.
    \]
    Now note that $\tilde{P}_n$ and $Z_n$ are independent and $Z_n$ is orthogonally invariant. Further, the LSD of $\theta Z_n$ is the semi-circle law with variance $\theta$. Hence one may apply Theorem~2.1 of \cite{benaych2011eigenvalues} on $\tilde X_n$ to conclude that
    \begin{align*}
        \frac{\lambda_1(\tilde{X_n})}{\sqrt n} \convas  \begin{cases}
            \beta + \frac{\theta^2}{\beta} & \text{if } \beta > \theta, \\
            2\theta & \text{otherwise}.
        \end{cases}
    \end{align*}
    Similarly,
    \begin{align*}
        \frac{\lambda_n(\tilde{X_n})}{\sqrt n} \convas \begin{cases}
            -\big(\beta + \frac{\theta^2}{\beta}\big) & \text{if } \beta > \theta, \\
            -2\theta & \text{otherwise}.
        \end{cases}
    \end{align*}
    Note that $\beta > \theta$ if and only if $r - 2 > 1$, i.e. $r \ge 4$. From this we conclude that
    \[
        \frac{\lambda_1(X_n)}{\sqrt{n}} \convas \varpi_r \qquad \text{and} \qquad \frac{\lambda_n(X_n)}{\sqrt{n}} \convas -\varpi_r.
    \]
    This completes the proof of \eqref{eq:edge_prob_conv_r_fixed}.

    By Weyl's inequality,
    \[
        \max_{1 \le i \le n} |\lambda_i(X_n) - \lambda_i(P_n)| \le \theta \|Z_n\|_{\op} = O_P(\sqrt{n/r}).
    \]
    Therefore, when $r \gg 1$, it is enough to consider the limits of $\lambda_1(P_n)$ and $\lambda_n(P_n)$.
    
    In the regime $1 \ll r \ll n$, one has that \begin{align}\label{eq:lambda_1_and_n_convergence_r_gg_1}
        \frac{\lambda_1(P_n)}{\sqrt n} \convas 1 \quad \text{and} \quad \frac{\lambda_n(P_n)}{\sqrt n} \convas -1
    \end{align}
    proving \eqref{eq:edge_prob_conv_r<<n}.

    Similarly \eqref{eq:edge_dist_conv_r=n} and \eqref{eq:edge_dist_conv_r>>n} follow from \eqref{eq:P_n_fluctuation_r/n=c} and \eqref{eq:P_n_fluctuation_r>>n} respectively. This finishes the proof.
\end{proof}
\begin{proof}[Proof of Theorem~\ref{thm:edge-fluctuations}]
   We start with the proof of \eqref{eq:edge_fluc_r_fixed}. Since $M_n$ has the same distribution as $X_n$, in the proof we shall work with $X_n$ instead of $M_n$. Define 
   \begin{align*}
   X'_n&:=\frac{X_n}{\theta \sqrt n},\\
   P'_n  &:= \frac{\alpha'}{\sqrt n} U \bw \bw^{\top} + \frac{\beta'}{\sqrt n} (\bV\bw^\top + \bw \bV^\top),\\ 
   Z' &= \frac{1}{\sqrt n} Z,\\
   \alpha' &= \frac{\alpha}{\theta}=\sqrt{(r-2)(r-3)},\\ 
   \beta' &= \frac{\beta}{\theta} = \sqrt{r-2}.
   \end{align*}
   Notice that $Z'_n$ is a GOE and hence $X'_n = P'_n + Z'_n$ is a rank-2 perturbation of a GOE. Now we can apply Theorem 2.11 of \cite{knowles2014outliers} which provides the joint fluctuation of the outlier eigenvalues (defined appropriately) for spiked Wigner matrices. In this theorem, the perturbation is assumed to be deterministic. But, in the current scenario $P'_n$ is random, but independent of $Z'_n$. So, the idea is to condition on $U, \bV$ and apply the theorem conditionally. In order to apply the mentioned theorem, one needs several conditions on the eigenvalues of $P'_n$. These conditions are translated in the following events:
    \begin{align*}
        E^{(1)}_n &= \{\lambda_1(P'_n) \leq \beta' + 1,\, \lambda_n(P'_n) \geq -\beta' -1\}, \\
        E^{(2)}_n &= \{|\lambda_1(P'_n)| \geq 1 + (\log\log n)^{k\log n} n^{-1/3}, \,|\lambda_n(P'_n)| \geq 1 + (\log\log n)^{k\log n} n^{-1/3} \},\\
        E^{(3)}_n &= \{\sqrt n (|\lambda_1(P'_n)| - 1)^{1/2}|\lambda_1(P'_n) - \lambda_n(P'_n)| \geq n^{1/3}\},\\
        E_n &= E^{(1)}_n \cap E^{(2)}_n \cap E^{(3)}_n . 
    \end{align*}
    The expressions for $\lambda_1(P_n)$ and $\lambda_n(P_n)$ are available in equations~\eqref{eq:eigen_lambda_1} and \eqref{eq:eigen_lambda_n}. From the proof of Lemma~\ref{lem:P_n_fluctuation}, $\lambda_1(P'_n) \convas \beta'$ and $\lambda_n(P'_n) \convas -\beta'$. Thus, $\bbP(E^{(1)}_1) \to 1$. Since $\beta' = \sqrt{r-2} \geq \sqrt{2}$ for $r\geq 4$ and 
    \[
        (\log\log n)^{k\log n} n^{-1/3} \to 0,
    \]
    $\bbP(E^{(2)}_n) \to 1$. Also, on $E^{(2)}_n$,
    \[
        (|\lambda_1(P'_n)| - 1) \geq (\log \log n)^{k \log n} n^{-1/3}.
    \] 
    Moreover, $\lambda_1(P'_n) - \lambda_n(P'_n) \convas 2\beta'$. Therefore, 
    \[
        \sqrt n (|\lambda_1(P'_n)| - 1)^{1/2}|\lambda_1(P'_n) - \lambda_n(P'_n)| = \Omega_\bbP((\log \log n)^{k\log n/2} n^{1/3}),
    \]
    in particular, $\bbP(E^{(3)}) \to 1$ as $n \to \infty$. Thus, $\bbP(E_n) \to 1$ as $n \to \infty$. Now, fix arbitrary $f : \bbR^2 \to \bbR$ which is bounded and continuous. Also, define $\tilde{f}: \bbR \to \bbR$ as $\tilde{f} (x) = 0$ for all $x\in \bbR$. Fix $\epsilon > 0$. On the set $E_n$, $(\lambda_1(X'_n), \lambda_n(X'_n))|(U, \bV)$ satisfies the conditions required in Theorem 2.11 of \cite{knowles2014outliers}. Applying the theorem for $\epsilon/2$ and the functions $\tilde{f}$ and $f$, we can get $s_0 \geq 0$ and $N_0 \geq 1$. Further we can find $N_1$, such that $s_0 \leq N_1$. Note that if $n \geq N_1$, then on $E_n$, $\lambda_1(P'_n)$ and $\lambda_n(P'_n)$ are in different groups of outliers. Define $N = \max\{N_0, N_1\}$. Then, for $n \geq N$, 
    \begin{equation}\label{eq:closeness_zeta_xi}
        |\bbE(f(\zeta_n, \tilde \zeta_n)|U, \bV) - \bbE(f(\eta(\lambda_1(P'_n))\xi, \eta(\lambda_n(P'_n))\tilde\xi)|U,\bV)| \leq \epsilon/2,
    \end{equation}
    where 
    \begin{align*}
    \zeta_n &= \sqrt n (|\lambda_1(P'_n)| -1)^{-1/2} (\lambda_1(X'_n) -\sigma(\lambda_1(P'_n))),\\
    \tilde\zeta_n &= \sqrt n (|\lambda_n(P'_n)| -1)^{-1/2} (\lambda_n(X'_n) -\sigma(\lambda_n(P'_n))),\\
    \end{align*}
    $\xi, \tilde\xi$ are standard Gaussian random variables independent of everything, and $\sigma, \eta:\bbR\backslash\{0\} \to \bbR$ are defined as
    \begin{align*}
        &\sigma(x) = x+1/x   &\eta(x) = \frac{\sqrt{2(|x|+1)}}{|x|}.
    \end{align*}
    Since $f$ is arbitrary, for arbitrary $g :\bbR^2 \to \bbR$, we can look at the (potentially $U$, $\bV$ dependent) function 
    \begin{align*}
        f((x, x')) = g(((|\lambda_1(P'_n)| -1)^{1/2}x+\sqrt n(\sigma(\lambda_1(P'_n)) - \sigma(\beta')),\\ (|\lambda_n(P'_n)| -1)^{1/2}x' + \sqrt n(\sigma(\lambda_n(P'_n)) - \sigma(-\beta')))).
    \end{align*}
    First notice that with this choice of $f$, 
    \[
        f(\zeta, \tilde\zeta) = g(\sqrt n (\lambda_1(X'_n) - \sigma(\beta')), \sqrt n(\lambda_n(X'_n) - \sigma(-\beta'))).
    \]
    So, \eqref{eq:closeness_zeta_xi} can be rewritten as
    \begin{align*}
        |\bbE(g(\sqrt n (\lambda_1(X'_n) - \sigma(\beta')), \sqrt n(\lambda_n(X'_n) - \sigma(-\beta')))|U,\bV) -\\ \bbE(f(\eta(\lambda_1(P'_n))\xi, \eta(\lambda_n(P'_n))\tilde\xi)|U,\bV)| \leq \epsilon/2,
    \end{align*}
    on the set $E_n$ for all large $n$. Since $\bbP(E_n) \to 1$ and $g$ and hence $f$ are bounded functions, using DCT type argument, we can say that the previous inequality is true unconditionally. Thus, it is now enough to find the limit of $\bbE(f(\eta(\lambda_1(P'_n))\xi, \eta(\lambda_n(P'_n))\tilde\xi))$, that is of
    \begin{align*}
        \bbE(g(((|\lambda_1(P'_n)| -1)^{1/2}\eta(\lambda_1(P'_n)) \xi+\sqrt n(\sigma(\lambda_1(P'_n)) - \sigma(\beta')),\\ (|\lambda_n(P'_n)| -1)^{1/2}\eta(\lambda_n(P'_n))\tilde\xi + \sqrt n(\sigma(\lambda_n(P'_n)) - \sigma(-\beta'))))).
    \end{align*}
    Equivalently, we shall find the distributional limit of the bivariate random variable above. Using the facts that $\lambda_1(P'_n) \to \beta'$ and $\lambda_n(P'_n) \to -\beta'$ almost surely, we can conclude
    \begin{align}\label{eq:as_convergence}
        (|\lambda_1(P'_n)| -1)^{1/2}\eta(\lambda_1(P'_n)) &\to (\beta' -1)^{1/2}\eta(\beta'),\\
        (|\lambda_n(P'_n)| -1)^{1/2}\eta(\lambda_n(P'_n)) &\to (\beta' -1)^{1/2}\eta(\beta')
    \end{align}
    almost surely. Let us look at $\sqrt n(\sigma(\lambda_1(P'_n)) - \sigma(\beta'))$ and $\sqrt n(\sigma(\lambda_n(P'_n)) - \sigma(-\beta'))$. Note that its limit is available in \eqref{eq:P_n_fluctuation_r_fixed}. Fix $a, b\in \bbR$. Define the function $\tilde\sigma: \bbR^2\backslash((\{0\}\times \bbR) \cup (\bbR \times \{0\})) \to \bbR$, 
    \[
        \tilde\sigma(x,y) = a\sigma(x) + b\sigma(y).
    \]
    We apply Delta method on $(\lambda_1(P'_n), \lambda_n(P'_n))$ with the centering $(\beta', -\beta')$ respect to the scalar function $\tilde\sigma$. We can compute that
    \[
        \nabla \tilde\sigma (x,y) = (a \sigma'(x), b\sigma'(y)) = \bigg(a - \frac{a}{x^2}, b- \frac{b}{y^2}\bigg).
    \]
    Then, by Delta method, 
    \[
        a\sqrt n(\sigma(\lambda_1(P'_n)) - \sigma(\beta')) +b \sqrt n(\sigma(\lambda_n(P'_n)) + \sigma(\beta'))
    \]
    converges in distribution to a Gaussian random variable with variance
    \begin{align*}
        &\nabla\tilde\sigma(\beta', -\beta')^\top\begin{pmatrix}\frac{\alpha'^2}{4} + \frac{3\beta'^2}{2}& \frac{\alpha'^2}{4} + \frac{\beta'^2}{2}\\ \frac{\alpha'^2}{4} + \frac{\beta'^2}{2} & \frac{\alpha'^2}{4} + \frac{3\beta'^2}{2}\end{pmatrix}\nabla\tilde\sigma(\beta', -\beta')\\
        & = (a^2 +b^2)\bigg(\frac{\alpha'^2}{4} + \frac{3\beta'^2}{2}\bigg)\bigg(1-\frac{1}{\beta'^2}\bigg)^2 + 2ab\bigg(\frac{\alpha'^2}{4} + \frac{\beta'^2}{2}\bigg)\bigg(1-\frac{1}{\beta'^2}\bigg)^2.
    \end{align*}
    One can now conclude that
    \begin{align}\label{eq:delta_method} \nonumber
        &\sqrt{n}(\sigma(\lambda_1(P'_n)) - \sigma(\beta'), \sigma(\lambda_n(P'_n)) - \sigma(-\beta'))\\
        &\qquad\qquad\convd \cN_2N\left((0,0),\bigg(1-\frac{1}{\beta'^2}\bigg)^2\begin{pmatrix}\frac{\alpha'^2}{4} + \frac{3\beta'^2}{2}& \frac{\alpha'^2}{4} + \frac{\beta'^2}{2}\\ \frac{\alpha'^2}{4} + \frac{\beta'^2}{2} & \frac{\alpha'^2}{4} + \frac{3\beta'^2}{2}\end{pmatrix} \right).
    \end{align}
    Finally, using \eqref{eq:as_convergence}, \eqref{eq:delta_method} and the independence of $\xi$, $\tilde\xi$ and $(\lambda_1(P'_n), \lambda_n(P'_n))$, we can finally conclude that
    \begin{align*}
        \sqrt{n} (\lambda_1(X'_n) - \sigma(\beta'), \lambda_n(X'_n) - \sigma(-\beta')) 
    \end{align*}
    converges in distribution to a centered Gaussian distribution with the covariance matrix given by
    \[
    \begin{pmatrix}\big(1-\frac{1}{\beta'^2}\big)^2(\frac{\alpha'^2}{4} + \frac{3\beta'^2}{2}) + (\beta' -1)\eta(\beta')^2& \big(1-\frac{1}{\beta'^2}\big)^2(\frac{\alpha'^2}{4} + \frac{\beta'^2}{2})\\ \big(1-\frac{1}{\beta'^2}\big)^2(\frac{\alpha'^2}{4} + \frac{\beta'^2}{2}) & \big(1-\frac{1}{\beta'^2}\big)^2(\frac{\alpha'^2}{4} + \frac{3\beta'^2}{2})+(\beta' -1)\eta(\beta')^2\end{pmatrix}.
    \]
    We may simplify the covariance matrix to finish the proof of \eqref{eq:edge_fluc_r_fixed}.\\
    The proof of \eqref{eq:edge_fluc_sqrt n<<r<<n} is much simpler. It follows from the fact that when $\sqrt n \ll r\ll n$
    \begin{equation}\label{eq:closeness_X_n_P_n}
        \frac{1}{\sqrt r}\bigg\|\frac{X_n}{\sqrt n} - \frac{1}{\sqrt n}P_n\bigg\|_\op = \frac{\theta}{\sqrt r}\| Z\|_\op =\sqrt{\frac{ n}{r(r-1)}} \bigg\|\frac{1}{\sqrt n}Z\bigg\|_\op = o(1), 
    \end{equation}
    almost surely and equation \eqref{eq:P_n_fluctuation_1<<r<<n}.
\end{proof}
\begin{proof}[Proof of Theorem~\ref{thm:overlap-with-w}]
    Let $P_n = \lambda_1 \bu_1 \bu_1^\top + \lambda_n \bu_n \bu_n^\top$. We know from the proof of Theorem~\ref{thm:edge-limits} that $\frac{\lambda_1}{\sqrt{n}} \convas \beta$ and $\frac{\lambda_n}{\sqrt{n}} \convas -\beta$. Let $\tilde{P}_n = \sqrt{n} \beta(\bu_1\bu_1^\top - \bu_n \bu_n^\top)$. Set
    \[
        \tilde{X}_n = \tilde{P}_n + \theta Z_n.
    \]
    Let $\tilde{\bv}_1$ and $\tilde{\bv}_n$ denote respectively the largest and the smallest eigenvectors of $\tilde{X}_n$. Then by Theorem~2.2 of \cite{benaych2011eigenvalues}, we have
    \[
        |\bu_1^\top \tilde{\bv}_1|^2 \convas 1 - \frac{\theta^2}{\beta^2} \qquad \text{and} \qquad |\bu_1^\top \tilde{\bv}_n|^2 \convas 0.
    \]
    and
    \[
        |\bu_n^\top \tilde{\bv}_n|^2 \convas 1 - \frac{\theta^2}{\beta^2} \qquad \text{and} \qquad |\bu_n^\top \tilde{\bv}_1|^2 \convas 0.
    \]
    Let $\bv_1$ and $\bv_n$ denote respectively the largest and the smallest eigenvectors of $X_n$. Then by the Davis-Kahan theorem (see, e.g., Theorem~2 of \cite{yu2015useful}), we have that for some absolute constant $C > 0$,
    \[
        |\bv_1^\top \tilde{\bv}_1| \ge 1 - \frac{C \|P_n - \tilde{P}_n\|_{\op}}{\sqrt{n}\beta} \convas 1.
    \]
    Similarly,
    \[
        |\bv_n^\top \tilde{\bv}_n| \convas 1.
    \]
    Therefore
    \[
        |\bu_1^\top \bv_1|^2 \convas 1 - \frac{\theta^2}{\beta^2} \qquad \text{and} \qquad |\bu_1^\top \bv_n|^2 \convas 0.
    \]
    and
    \[
        |\bu_n^\top \bv_n|^2 \convas 1 - \frac{\theta^2}{\beta^2}\qquad \text{and} \qquad |\bu_n^\top \bv_1|^2 \convas 0.
    \]
    Now by Lemma~\ref{lem:eigen-structure}-(iii), we have
    \[
        \bw^\top \bu_1 = \frac{\lambda_1}{\sqrt{\lambda_1^2 + \beta^2 (\|\bV\|^2 - (\bw^\top\bV)^2)}} \convas \frac{1}{\sqrt{2}}
    \]
    and similarly,
    \[
        \bw^\top \bu_n \convas -\frac{1}{\sqrt{2}}.
    \]
    Suppose $\eta_i = \sign(\bv_i^\top \bu_i)$, $i = 1, n$. Let $\bw_1 = \frac{\eta_1\bv_1 - \eta_n \bv_n}{\sqrt{2}}$ and $\bw_2 = \frac{\eta_1\bv_1 + \eta_n \bv_n}{\sqrt{2}}$. We see that
    \[
        \bw_1^\top \frac{\bu_1 - \bu_n}{\sqrt{2}} = \frac{1}{2} [|\bu_1^\top \bv_1| + |\bu_n^\top \bv_n| - \eta_1 \bv_1^\top \bu_n - \eta_n \bv_n^\top \bu_1] \convas \sqrt{1 - \frac{\theta^2}{\beta^2}}
    \]
    and
    \[
        \bw_2^\top \frac{\bu_1 - \bu_n}{\sqrt{2}} = \frac{1}{2} [|\bu_1^\top \bv_1| - |\bu_n^\top \bv_n| - \eta_1 \bv_1^\top \bu_n + \eta_n \bv_n^\top \bu_1] \convas 0.
    \]
    Since
    \[
        \bw^\top \frac{\bu_1 - \bu_n}{\sqrt{2}} \convas 1,
    \]
    it follows that
    \[
        \bw^\top \bw_1 \convas \sqrt{1 - \frac{\theta^2}{\beta^2}} \qquad \text{and} \qquad \bw^\top \bw_2 \convas 0.
    \]
    In other words,
    \[
        \max\bigg\{\bigg|\bw^\top \frac{\bv_1 + \bv_n}{\sqrt{2}}\bigg|^2, \bigg|\bw^\top \frac{\bv_1 - \bv_n}{\sqrt{2}}\bigg|^2\bigg\} \convas 1 - \frac{\theta^2}{\beta^2}
    \]
    and
    \[
        \min\bigg\{\bigg|\bw^\top \frac{\bv_1 + \bv_n}{\sqrt{2}}\bigg|^2, \bigg|\bw^\top \frac{\bv_1 - \bv_n}{\sqrt{2}}\bigg|^2\bigg\} \convas 0.
    \]
    The rest of the proof now follows since $M \overset{d}{=} X$.

    When $r \to \infty$, we have by the Davis-Kahan Theorem that
    \[
        |\bu_1^\top \bv_1| \ge 1 - \frac{C \theta \|Z_n\|_{\op}}{\lambda_1} \convp 1.
    \]
    Similarly,
    \[
        |\bu_n^\top \bv_n| \ge 1 - \frac{C \theta \|Z_n\|_{\op}}{|\lambda_n|} \convp 1.
    \]
    Here we have used that $\|Z_n\|_{\op} = O_P(\sqrt{n})$ and when $r \ll n$
    \[
        \frac{\lambda_1}{\sqrt{n}} \convas 1.
    \]
    When $\frac{r}{n} \to c \in (0, 1)$, we have
    \[
        \bigg(\frac{\lambda_1}{\sqrt{n}}, \frac{\lambda_n}{\sqrt{n}}\bigg) \convd \bigg(\frac{\sqrt{c}\zeta + \sqrt{c \zeta^2 + 4}}{2}, \frac{\sqrt{c}\zeta - \sqrt{c \zeta^2 + 4}}{2}\bigg) = \bigg(\xi, - \frac{1}{\xi}\bigg),
    \]
    where $\xi := \frac{\sqrt{c}\zeta + \sqrt{c \zeta^2 + 4}}{2} \ge 1$.
    Hence
    \begin{align*}
        (\bw^\top \bu_1, \bw^\top \bu_n) &= \bigg(\frac{\lambda_1}{\sqrt{\lambda_1^2 + \beta^2 (\|\bV\|^2 - (\bw^\top\bV)^2)}}, \frac{\lambda_n}{\sqrt{\lambda_n^2 + \beta^2 (\|\bV\|^2 - (\bw^\top\bV)^2)}}\bigg) \\
        &\convd \bigg(\frac{\xi}{\sqrt{ \xi^2 + 1}}, \frac{-1}{\sqrt{\xi^2 + 1}}\bigg).
    \end{align*}
It follows that
\begin{align*}
    \bigg(\bw^\top \frac{\bu_1 - \bu_n}{\sqrt{2}}, \bw^\top \frac{\bu_1 + \bu_n}{\sqrt{2}}\bigg) \convd \frac{1}{\sqrt{2}}\bigg(\frac{\xi + 1}{\sqrt{\xi^2 + 1}}, \frac{\xi - 1}{\sqrt{\xi^2 + 1}}\bigg).
\end{align*}
Hence
\begin{align*}
    \bigg(\max\bigg\{\bigg|\bw^\top \frac{\bv_1 - \bv_n}{\sqrt{2}}\bigg|, \bigg|\bw^\top \frac{\bv_1 + \bv_n}{\sqrt{2}}\bigg|\bigg\}, & \min\bigg\{\bigg|\bw^\top \frac{\bv_1 - \bv_n}{\sqrt{2}}\bigg|, \bigg|\bw^\top \frac{\bv_1 + \bv_n}{\sqrt{2}}\bigg|\bigg\}\bigg)\\
    &\convd \frac{1}{\sqrt{2}}\bigg(\frac{\xi + 1}{\sqrt{\xi^2 + 1}}, \frac{\xi - 1}{\sqrt{\xi^2 + 1}}\bigg).
\end{align*}
Finally, when $r / n \to \infty$, we have that
\[
    \bigg(\frac{\lambda_1}{\sqrt{r}}, \frac{\lambda_n}{\sqrt{r}}\bigg) \convd \big(\zeta_+, - \zeta_-\big).
\]
Hence
\begin{align*}
    (\bw^\top \bu_1, \bw^\top \bu_n) &= \bigg(\frac{\lambda_1}{\sqrt{\lambda_1^2 + \beta^2 (\|\bV\|^2 - (\bw^\top\bV)^2)}}, \frac{\lambda_n}{\sqrt{\lambda_n^2 + \beta^2 (\|\bV\|^2 - (\bw^\top\bV)^2)}}\bigg) \\
    &\convd (\ind_{\{\zeta > 0\}}, -\ind_{\{\zeta < 0\}}) \\
    &\overset{d}{=} (B, -(1 - B)),
\end{align*}
where $B \sim \Ber(1/2)$. It follows that
\begin{align*}
    \bigg(\bw^\top \frac{\bu_1 - \bu_n}{\sqrt{2}}, \bw^\top \frac{\bu_1 + \bu_n}{\sqrt{2}}\bigg) \convd \frac{1}{\sqrt{2}}\big(1, 2B - 1\big).
\end{align*}
Hence
\begin{align*}
    \bigg(\max\bigg\{\bigg|\bw^\top \frac{\bv_1 - \bv_n}{\sqrt{2}}\bigg|, \bigg|\bw^\top \frac{\bv_1 + \bv_n}{\sqrt{2}}\bigg|\bigg\}, & \min\bigg\{\bigg|\bw^\top \frac{\bv_1 - \bv_n}{\sqrt{2}}\bigg|, \bigg|\bw^\top \frac{\bv_1 + \bv_n}{\sqrt{2}}\bigg|\bigg\}\bigg)\\
    &\convd \bigg(\frac{1}{\sqrt{2}}, \frac{1}{\sqrt{2}}\bigg).
\end{align*}
This completes the proof of \eqref{eq:edge_fluc_r_fixed}. 
\end{proof}

\subsection{Proof of Theorem~\ref{thm:spectral_measure}}
The proof uses isotropic local semicircle law. We state a version of this here. Let us introduce the notation 
    \begin{align}\label{eq:inner_prod}
        &\langle \bx, \by\rangle_A := \bx^\top (A- zI)^{-1} \by,
        &\langle \bx, \by \rangle := \bx^\top \by
    \end{align}
    for $\bx, \by \in \bbR^n$, $A \in \mathbb M_n(\bbR)$, $z \in \bbC$, whenever it exists. \\
    We shall now define the concept of stochastic domination since it will be heavily used in what follows.
    \begin{definition}[Stochastic Domination]
    Consider two sequences of families of nonnegative random variables 
    \begin{align*}
        &\xi= (\xi_n(u): u\in U_n)_{n=1}^\infty, &\zeta= (\zeta_n(u): u\in U_n)_{n=1}^\infty,
    \end{align*}
    where $(U_n)_{n=1}^\infty$ denotes a sequence of possibly $n$-dependent parameter sets. We say that $\xi \prec \zeta$, if for any $\epsilon, \, D >0$, there exists $n_0(\epsilon, \, D) \in \bbN$ such that 
    \[
        \sup_{u \in U_n} \bbP(\xi_n(u) > n^\epsilon \zeta_n(u)) \leq n^{-D}
    \]
    for all $n \geq n_0(\epsilon, D)$. Given two (not necessarily nonnegative) families $\xi, \zeta$  and a nonnegative family $\eta$, we say $\xi = \zeta + O_\prec(\eta)$ if $|\xi - \zeta| \prec \eta$.
    \end{definition}
    Given four nonnegative families $\xi_1,\, \xi_2,\, \zeta_1,\,\zeta_2$, if $\xi_1 \prec \zeta_1$ and $\xi_2 \prec \zeta_2$, then $\xi_1 + \xi_2 \prec \zeta_1+ \zeta_2$ and $\xi_1 \xi_2 \prec \zeta_1 \zeta_2$.
    Now we are ready to state a simplified version of the isotropic local semicircle law :

    \begin{lemma}[Isotropic Local Semicircle Law]\label{lem:iso_local_semicircle_law}
    For a fixed $z \in \bbC^+$, 
    \begin{align}\label{eq:iso_local_semicircle}
    \langle \bx, \by \rangle_{Z'} = \langle \bx, \by \rangle s_{\sc} (z) + O_\prec\bigg(\frac{\|{\bx}\|\|\by\|}{\sqrt n}\bigg)
    \end{align}
    uniformly for $\bx,\, \by \in \bbR^n$.
    \end{lemma}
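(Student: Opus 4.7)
The plan is to invoke the isotropic local semicircle law for Wigner matrices, which is well-established in the literature (see, e.g., \cite{knowles2014outliers} and the references therein). Since $Z' = n^{-1/2}Z$ is a normalized GOE matrix, the general isotropic law asserts that for $\eta = \text{Im}\, z > 0$ and deterministic $\bx, \by$,
\[
\langle \bx, \by\rangle_{Z'} - \langle \bx, \by\rangle s_{\sc}(z) \prec \|\bx\|\|\by\| \left( \sqrt{\frac{\text{Im}\, s_{\sc}(z)}{n\eta}} + \frac{1}{n\eta}\right).
\]
Since $z$ is \emph{fixed} in $\bbC^+$, the quantity in parentheses is a constant times $n^{-1/2}$, yielding the simplified statement. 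By bilinearity, it suffices to treat the case $\|\bx\| = \|\by\| = 1$.

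For a self-contained derivation, I would first establish the entrywise local law: with $G(z) := (Z' - zI)^{-1}$, one has $\max_i |G_{ii}(z) - s_{\sc}(z)| \prec n^{-1/2}$ and $\max_{i \neq j} |G_{ij}(z)| \prec n^{-1/2}$. For fixed $z$, these follow from the standard Schur complement expansion together with stability of the self-consistent equation $s_{\sc}(z)^{-1} = -z - s_{\sc}(z)$. Then decompose
\[
\langle \bx, \by\rangle_{Z'} - \langle\bx,\by\rangle s_{\sc}(z) = \sum_i x_i y_i (G_{ii} - s_{\sc}(z)) + \sum_{i \neq j} x_i y_j G_{ij}.
\]
The diagonal part is handled by a martingale-difference decomposition indexed by rows of $Z$: the summands exhibit sufficient cancellation to yield the fluctuation bound $\|\bx\|\|\by\|/\sqrt{n}$ rather than the naive $\|\bx\|_1\|\by\|_1/\sqrt{n}$.

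The main obstacle is the off-diagonal quadratic form $Q := \sum_{i\neq j} x_i y_j G_{ij}$, for which the pointwise bound $|G_{ij}| \prec n^{-1/2}$ alone gives only $\|\bx\|_1\|\by\|_1/\sqrt{n}$. The remedy is a high-moment estimate: expand $\bbE|Q|^{2p}$ using the resolvent identity and exploit the Ward identity
\[
\sum_{k} |G_{ik}(z)|^2 = \frac{\text{Im}\, G_{ii}(z)}{\eta},
\]
which converts bilinear sums of $|G_{ij}|^2$ into diagonal resolvent entries controlled by the entrywise law. Summing the resulting combinatorial expansion and invoking Markov's inequality with $p$ chosen arbitrarily large yields $Q \prec \|\bx\|\|\by\|/\sqrt{n}$ as required. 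Combining the two pieces gives the claimed isotropic estimate.
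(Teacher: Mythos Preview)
Your proposal is correct and takes essentially the same approach as the paper: the paper's proof is a one-line citation of the isotropic local semicircle law from \cite{knowles2013isotropic}, and your first paragraph does exactly this (citing \cite{knowles2014outliers} instead, which also contains the needed statement). Your additional self-contained sketch of the high-moment/Ward-identity argument is accurate and goes beyond what the paper provides, but is not needed here since the result is simply quoted from the literature.
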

    \begin{proof}
        The proof follows from the local semicircle law (see \cite{knowles2013isotropic}).
    \end{proof}
    Throughout the proof we will use \eqref{eq:iso_local_semicircle} repeatedly without explicitly mentioning.
    
\begin{proof}[Proof of Theorem~\ref{thm:spectral_measure}]
    Define 
    \begin{align}
        &M' = \frac{1}{\theta \sqrt n} M,
        & Z = \frac{1}{\sqrt n}Z\\
        &\alpha' =\alpha/\theta, 
        &\beta' = \beta/\theta. 
    \end{align}
    Then, we have the following relationship between $M'$ and $Z$:
    \[
        M' = \frac{\alpha' U}{\sqrt n} \bw \bw^{\top} +\frac{\beta'}{\sqrt n} (\bw \bV ^\top + \bV \bw^\top) + Z',
    \]
    that is $M'$ is a rank-2 perturbation of $Z'$. 
    Given $A\in \mathbb M_n(\bbR)$, suppose its spectral decomposition is given by $A = \sum_{i=1}^n \lambda_i \bu_i \bu_i^\top$. With these notations the Stieltjes transform of $\mu_{M',\bx}$ can be expressed as  
    \[
        S_{M', \bx}(z) = \sum_{i = 1}^n\frac{|\bx^\top \bu_i|^2}{\lambda_i - z} = \bx^\top \bigg(\sum_{i = 1}^n\frac{\bu_i \bu_i^\top}{\lambda_i - z}\bigg) \bx = \bx^\top (M' - zI)^{-1} \bx = \langle \bx, \bx \rangle _{M'}.
    \]
    So, it is enough to find the limit of $\langle \bx, \bx \rangle_{M'}$ for each fixed $z \in \bbC^+$. Decompose $\bx$ as 
    \[
        \bx = \rho \bw + \sqrt{1-\rho^2} \by
    \]
    for some $\by \in \bbS^{n-1}$ and $\by \perp \bw$. Then, using lemma~\ref{lem:local_law_step_2} we get
    \begin{align*}
        \la \bx, \bx \ra_{M'} &= \rho^2 \la \bw, \bw\ra_{M'} + 2\rho\sqrt{1-\rho^2} \Re(\la \bx, \bw \ra_{M'}) + (1- \rho^2) \la \by, \by\ra_{M'}\\
        &= \rho^2 \frac{ s_\sc(z)}{1- \beta'^2 s^2_\sc(z)} + (1-\rho^2) s_\sc(z) + O_\prec\bigg(\frac{1}{\sqrt n}\bigg).
    \end{align*}
    This shows that 
    \begin{equation}\label{eq:limiting_Stieltjes_transform}
        s(z) := \rho^2\frac{s_\sc(z)}{1 - \beta'^2 s^2_\sc(z)} + (1-\rho^2)s_\sc(z)
    \end{equation}
    is the Stieltjes transform of the limiting measure. The proof is finished using lemma~\ref{lem:Stieltjes_inversion}.
\end{proof}

\begin{lemma}\label{lem:local_law_step_1}
    Suppose $\bx$ and $ \by$ are possibly random vectors in $\bbR^n$ and are independent of $Z'$. If $\|\bx\| = O_\prec(1)$ and $\|\by\| = O_\prec(1)$, then 
    \begin{equation}\label{eq:local_law_step_1}
        \la \bx, \by \ra_{M'} 
        = s_\sc(z) \la \bx, \by \ra - \frac{\beta' s_\sc(z)}{\sqrt n} \la \bx, \bw\ra \la \bV, \by\ra_{M'} 
        - \frac{\beta' s_\sc(z)}{\sqrt n} \la \bx, \bV\ra \la \bw, \by\ra_{M'} + O_\prec\bigg( \frac{1}{\sqrt n}\bigg).
    \end{equation}
\end{lemma}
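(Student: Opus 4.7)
The plan is to apply a single resolvent expansion around $Z'$ and then feed each remaining resolvent inner product into the isotropic local semicircle law. Write $M' - zI = (Z' - zI) + P$, where
\[
    P := \tfrac{\alpha' U}{\sqrt n}\bw\bw^\top + \tfrac{\beta'}{\sqrt n}\bigl(\bw\bV^\top + \bV\bw^\top\bigr),
\]
so that the resolvent identity $(M'-zI)^{-1} = (Z'-zI)^{-1} - (Z'-zI)^{-1} P (M'-zI)^{-1}$ and a short computation of $P(M'-zI)^{-1}\by$ give
\[
    \la \bx, \by\ra_{M'} = \la \bx, \by\ra_{Z'} - \tfrac{\alpha' U}{\sqrt n}\la \bx,\bw\ra_{Z'}\la \bw,\by\ra_{M'} - \tfrac{\beta'}{\sqrt n}\la \bx,\bw\ra_{Z'}\la \bV,\by\ra_{M'} - \tfrac{\beta'}{\sqrt n}\la \bx,\bV\ra_{Z'}\la \bw,\by\ra_{M'}.
\]

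Next I would estimate each factor on the right. By hypothesis, $\bx, \by$ are independent of $Z'$, and $\bw, \bV, U$ are likewise independent of $Z'$ by the construction in Lemma~\ref{lem:rank-2-perturbation}, so Lemma~\ref{lem:iso_local_semicircle_law} is available. Using $\|\bx\|, \|\by\|, \|\bw\| = O_\prec(1)$ and the standard Gaussian tail bound $\|\bV\| = O_\prec(\sqrt n)$, the isotropic law yields
\[
    \la \bx,\by\ra_{Z'} = s_\sc(z)\la \bx,\by\ra + O_\prec\!\bigl(\tfrac{1}{\sqrt n}\bigr),\quad \la \bx,\bw\ra_{Z'} = s_\sc(z)\la \bx,\bw\ra + O_\prec\!\bigl(\tfrac{1}{\sqrt n}\bigr),\quad \la \bx,\bV\ra_{Z'} = s_\sc(z)\la \bx,\bV\ra + O_\prec(1).
\]
Since $\Im z > 0$ implies $\|(M'-zI)^{-1}\|_{\op}\le 1/\Im z$, I also get the a priori bounds $\la\bw,\by\ra_{M'} = O_\prec(1)$ and $\la\bV,\by\ra_{M'} = O_\prec(\sqrt n)$, while $U = O_\prec(1)$.

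Substituting these estimates term by term, the $U$-term is $\tfrac{\alpha' U}{\sqrt n}\cdot O_\prec(1)\cdot O_\prec(1) = O_\prec(1/\sqrt n)$; the error portion of $\la \bx,\bw\ra_{Z'}$ in the second term contributes $\tfrac{\beta'}{\sqrt n}\cdot O_\prec(1/\sqrt n)\cdot O_\prec(\sqrt n) = O_\prec(1/\sqrt n)$; and the error portion of $\la \bx,\bV\ra_{Z'}$ in the third contributes $\tfrac{\beta'}{\sqrt n}\cdot O_\prec(1)\cdot O_\prec(1) = O_\prec(1/\sqrt n)$. Collecting the two surviving main terms then produces exactly \eqref{eq:local_law_step_1}.

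The main obstacle is the bookkeeping forced by the fact that $\bV$ is not a bounded vector but a standard Gaussian of norm $\sqrt n$: this inflates $\la \bV,\by\ra_{M'}$ to order $\sqrt n$ and weakens the isotropic approximation of $\la \bx,\bV\ra_{Z'}$ to only $O_\prec(1)$. One must verify that after being divided by the prefactor $\beta'/\sqrt n$ inherent to the rank-two perturbation $P$, these amplified contributions still collapse to the claimed $O_\prec(1/\sqrt n)$ error without absorbing the two retained cross terms, which is what the calculation above achieves.
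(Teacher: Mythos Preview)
Your proof is correct and follows essentially the same approach as the paper: a single resolvent expansion $(M'-zI)^{-1} = (Z'-zI)^{-1} - (Z'-zI)^{-1}P(M'-zI)^{-1}$, followed by the isotropic local semicircle law applied to each $Z'$-resolvent factor, together with the a priori bounds $|\la\cdot,\cdot\ra_{M'}|\le \|\cdot\|\|\cdot\|/\Im z$, $\|\bV\|=O_\prec(\sqrt n)$, and $U=O_\prec(1)$. Your term-by-term bookkeeping of the error contributions matches the paper's exactly.
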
 

\begin{proof}
    Notice that if $\bx,\, \by \in \bbR^n$, then from \eqref{eq:iso_local_semicircle}, we get that
    \begin{equation} \label{eq:iso_local_semicircle_Z}
        \langle \bx, \by \rangle_{Z'} =s_\sc(z) \la \bx, \by \ra + O_\prec\bigg(\frac{\|\bx \| \|\by \|}{\sqrt n}\bigg)
    \end{equation}
    Further one can let $\bx,\, \by$ to be random and independent of $Z$ in \eqref{eq:iso_local_semicircle_Z}. Another crucial observation is that
    \begin{align}\label{eq:ST_bound}
        &\la \bx, \by\ra_{Z'} \leq \frac{\|\bx \| \|\by\|}{\Im(z)},
        & \la \bx, \by\ra_{M'} \leq \frac{\|\bx \| \|\by\|}{\Im(z)}.
    \end{align}
    Now, the resolvents of $M'$ and $Z'$ are related as follows:
    \[
        (M' -zI)^{-1} - (Z' -zI)^{-1} = - (Z'-zI)^{-1} \bigg(\frac{\alpha'U}{\sqrt n} \bw \bw^\top + \frac{\beta'}{\sqrt n}\bw \bV^\top + \frac{\beta'}{\sqrt n}\bV \bw^\top \bigg) (M' -zI)^{-1}.
    \]
    Upon left multiplying by $\bx^\top$ and right multiplying by $\by$, we get
    \begin{align*}
        &\la \bx, \by \ra_{M'} \\
        &= \la\bx, \by\ra_{Z'} - \frac{\alpha'U}{\sqrt n} \la\bx, \bw \ra_{Z'} \la \bw, \by\ra_{M'} - \frac{\beta'}{\sqrt n} \la\bx, \bw\ra_{Z'} \la\bV, \by\ra_{M'} - \frac{\beta'}{\sqrt n}\la \bx, \bV \ra_{Z'} \la \bw, \by \ra_{M'}  \\
        &= \bigg\{s_\sc(z) \la \bx, \by \ra  + O_\prec\bigg(\frac{\|\bx\|\|\by\|}{\sqrt n}\bigg)\bigg\}
        - \bigg\{ \frac{\alpha'  s_\sc(z) U}{ \sqrt n} \la \bx, \bw\ra \la \bw, \by \ra_{M'} + O_\prec\bigg( \frac{\|\bx\| \|\by\|}{n}\bigg)\bigg\}\\ 
        & \qquad\qquad - \bigg\{\frac{\beta' s_\sc(z)}{\sqrt n} \la \bx, \bw\ra \la \bV, \by\ra_{M'} + O_\prec\bigg( \frac{\|\bx\|\|\by\|\|\bV\|}{n}\bigg) \bigg\} \\
        & \qquad\qquad\qquad\qquad - \bigg\{\frac{\beta' s_\sc(z)}{\sqrt n} \la \bx, \bV\ra \la \bw, \by\ra_{M'} + O_\prec\bigg( \frac{\|\bx\|\|\by\|\|\bV\|}{n}\bigg)\bigg\}.
    \end{align*}
    In the previous line we have repeatedly used \eqref{eq:iso_local_semicircle_Z} and \eqref{eq:ST_bound}. From the concentration results of Chi-squared distribution (see, e.g., Proposition 2.10 of \cite{wainwright2019high}), we can conclude that
    \[
        \|\bV\| = O_\prec(\sqrt n).
    \]
    Further notice that 
    \[
        \bigg|\frac{\alpha'  s_\sc(z)}{ \sqrt n} \la \bx, \bw\ra \la \bw, \by \ra_{M'}\bigg| \leq \frac{\alpha'\|\bx\|\|\by\||U|}{\sqrt n \Im(z)^2}.
    \]
    The proof finishes by recalling the facts that $\|\bx\| = O_\prec(1)$, $\|\by\| = O_\prec(1)$ and $U = O_\prec(1)$.
\end{proof}

\begin{lemma} \label{lem:local_law_step_2}
    If $\bx \in \bbS^{n-1}$ and $\bx \perp \bw$, then
    \begin{align}
        \la \bw, \bw \ra_{M'} &= \frac{ s_\sc(z)}{1- \beta'^2 s^2_\sc(z)} + O_\prec\bigg(\frac{1}{\sqrt n}\bigg), \label{eq:local_law_step_2_eq_1}\\
        \la \bx, \bw \ra_{M'} &=  O_\prec\bigg(\frac{1}{\sqrt n}\bigg), \label{eq:local_law_step_2_eq_2}\\
        \la \bx, \bx \ra_{M'} &= s_\sc(z) + O_\prec\bigg(\frac{1}{\sqrt n}\bigg). \label{eq:local_law_step_2_eq_3}
    \end{align}
\end{lemma}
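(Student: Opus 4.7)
The plan is to treat the resolvent identity from Lemma~\ref{lem:local_law_step_1} as a toolbox and apply it to carefully chosen pairs $(\bx, \by)$, generating a small closed linear system in the unknown resolvent entries. The three statements will then fall out by substitution, using the orthogonality $\bx \perp \bw$ to kill leading terms. Throughout, I will silently use the deterministic bound $|\la \ba, \bb\ra_{M'}| \le \|\ba\|\|\bb\|/\Im(z)$, and the standard Gaussian concentration facts $\bw^\top \bV = O_\prec(1)$, $\bx^\top \bV = O_\prec(1)$ (since $\bx, \bw$ are unit vectors) and $\|\bV\|^2 = n + O_\prec(\sqrt n)$.

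For \eqref{eq:local_law_step_2_eq_1}, I apply Lemma~\ref{lem:local_law_step_1} with $(\bx, \by) = (\bw, \bw)$. Using $\la \bw, \bV \ra = O_\prec(1)$ and that $\la \bw, \bw \ra_{M'}$ is bounded, the second correction term is absorbed into the error, yielding
\[
    \la \bw, \bw \ra_{M'} = s_\sc(z) - \frac{\beta' s_\sc(z)}{\sqrt n} \la \bV, \bw \ra_{M'} + O_\prec\bigg(\frac{1}{\sqrt n}\bigg).
\]
To close this, I apply Lemma~\ref{lem:local_law_step_1} with the normalised pair $(\bV/\sqrt n, \bw)$, which satisfies the $O_\prec(1)$ norm hypothesis. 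The deterministic inner products $\la \bV/\sqrt n, \bw\ra = O_\prec(1/\sqrt n)$ and $\la \bV/\sqrt n, \bV\ra = \|\bV\|^2/\sqrt n = \sqrt n + O_\prec(1)$ reduce the identity to
\[
    \frac{1}{\sqrt n}\la \bV, \bw \ra_{M'} = -\beta' s_\sc(z) \la \bw, \bw \ra_{M'} + O_\prec\bigg(\frac{1}{\sqrt n}\bigg).
\]
Substituting back yields $\la \bw, \bw \ra_{M'}(1 - \beta'^2 s_\sc^2(z)) = s_\sc(z) + O_\prec(n^{-1/2})$; dividing by the factor on the left (which is nonzero for the relevant $z \in \bbC^+$) gives \eqref{eq:local_law_step_2_eq_1}.

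For \eqref{eq:local_law_step_2_eq_2}, I apply Lemma~\ref{lem:local_law_step_1} with $(\bx, \by) = (\bx, \bw)$. The deterministic term $s_\sc(z)\la \bx, \bw\ra$ vanishes by orthogonality, the term containing $\la \bx, \bw\ra \la \bV, \bw\ra_{M'}$ also vanishes, and the remaining term $-\frac{\beta' s_\sc(z)}{\sqrt n}\la \bx, \bV\ra \la \bw, \bw\ra_{M'}$ is $O_\prec(1/\sqrt n)$ because $\la \bx, \bV\ra = O_\prec(1)$ and $\la \bw, \bw\ra_{M'}$ is bounded. For \eqref{eq:local_law_step_2_eq_3}, I apply Lemma~\ref{lem:local_law_step_1} with $(\bx, \by) = (\bx, \bx)$: the first correction vanishes by $\la \bx, \bw\ra = 0$, and the second is $-\frac{\beta' s_\sc(z)}{\sqrt n}\la \bx, \bV\ra \la \bw, \bx\ra_{M'}$, which is $O_\prec(1/n)$ by the just-established \eqref{eq:local_law_step_2_eq_2}. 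This leaves $\la \bx, \bx\ra_{M'} = s_\sc(z) + O_\prec(n^{-1/2})$.

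The main technical nuisance will be the $(\bV/\sqrt n, \bw)$ step: the normalisation is essential because Lemma~\ref{lem:local_law_step_1} requires $O_\prec(1)$ norms, but after normalisation one must carefully track that $\la \bV/\sqrt n, \bV\ra$ is of order $\sqrt n$ rather than $1$, which is exactly what produces the finite-size shift $\beta'^2 s_\sc^2(z)$ in the denominator of \eqref{eq:local_law_step_2_eq_1}. Everything else is routine substitution and bookkeeping of $O_\prec$ errors.
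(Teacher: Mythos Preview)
Your proof is correct and follows essentially the same route as the paper: apply Lemma~\ref{lem:local_law_step_1} with the pairs $(\bw,\bw)$ and $(\bV/\sqrt n,\bw)$ to obtain a closed $2\times2$ linear system for $\la\bw,\bw\ra_{M'}$ and $\tfrac{1}{\sqrt n}\la\bV,\bw\ra_{M'}$, solve it for \eqref{eq:local_law_step_2_eq_1}, and then derive \eqref{eq:local_law_step_2_eq_2} and \eqref{eq:local_law_step_2_eq_3} by the same substitutions $(\bx,\bw)$ and $(\bx,\bx)$ using orthogonality. Your handling of the normalisation of $\bV$ and the $O_\prec$ bookkeeping matches the paper's argument.
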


\begin{proof}
    In order to prove \eqref{eq:local_law_step_2_eq_1}, in \eqref{eq:local_law_step_1} we take $\bx=\bw$, $\by = \bw$ and $\bx= \frac{1}{\sqrt n}\bV$, $\by = \bw$ to get
    \begin{align*}
        \la \bw, \bw\ra_{M'} &= s_\sc(z) - \frac{\beta' s_\sc (z)}{\sqrt n} \la \bV, \bw\ra_{M'} - \frac{\beta's_\sc(z)}{\sqrt n}\la \bw, \bV\ra \la \bw, \bw\ra_{M'} +O_\prec\bigg(\frac{1}{\sqrt n}\bigg), \text{ and}\\
        \frac{1}{\sqrt n} \la \bV, \bw \ra_{M'} &= \frac{1}{\sqrt n}s_\sc(z) \la \bV, \bw\ra - \frac{\beta's_\sc(z)}{n} \la\bV, \bw\ra \la\bV, \bw\ra_{M'} \\
        &\qquad\qquad\qquad\qquad - \frac{\beta' s_\sc(z)}{n} \|\bV\|^2 \la \bw, \bw \ra_{M'} + O_\prec\bigg(\frac{1}{\sqrt n}\bigg).
    \end{align*}
    Since $\la \bV, \bw\ra \sim \cN(0,1)$, $\la \bV, \bw \ra = O_\prec(1)$. From the concentration results for the chi-squared distribution, $\frac{\|\bV\|^2}{n} = 1 + O_\prec\big(\frac{1}{\sqrt n}\big)$. So, we can simplify the above two equations as
    \begin{align*}
        \la \bw, \bw \ra_{M'} &= s_\sc(z) - \frac{\beta' s_\sc(z)}{\sqrt n} \la \bV, \bw \ra_{M'} + O_\prec\bigg(\frac{1}{\sqrt n}\bigg), \\
        \frac{1}{\sqrt n}\la \bV, \bw \ra_{M'} &= -\beta' s_\sc(z)\la \bw, \bw\ra_{M'} + O_\prec\bigg(\frac{1}{\sqrt n}\bigg).
    \end{align*}
    Solving for $\la \bw, \bw \ra_{M'}$ and $\la \bV, \bw \ra_{M'}$, we get that
    \begin{align*}
        \la \bw, \bw \ra_{M'} &= \frac{s_\sc(z)}{1- \beta'^2 s^2_\sc(z)} +O_\prec\bigg(\frac{1}{\sqrt n}\bigg),
    \end{align*}
    Next, to prove \eqref{eq:local_law_step_2_eq_2}, put $\bx = \bx$, $\by = \bw$ to get
    \[
        \la \bx, \bw \ra_{M'} = s_\sc(z) \la \bx, \bw \ra - \frac{\beta' s_\sc(z)}{\sqrt n} \la \bx, \bw\ra \la \bV, \bw\ra_{M'} 
        - \frac{\beta' s_\sc(z)}{\sqrt n} \la \bx, \bV\ra \la \bw, \bw\ra_{M'} + O_\prec\bigg( \frac{1}{\sqrt n}\bigg)
    \]
    and notice that $\la \bx, \bw \ra = 0$, $\la \bx, \bV \ra \sim \cN(0,1)$ and $\la \bw, \bw \ra_{M'} = O_\prec(1)$.\\
    Lastly \eqref{eq:local_law_step_2_eq_3} follows by putting $\bx =\bx$, $\by = \bx$ in \eqref{eq:local_law_step_1} and using \eqref{eq:local_law_step_2_eq_2}. This completes the proof of lemma~\ref{lem:local_law_step_2}
\end{proof}
\begin{lemma} \label{lem:Stieltjes_inversion}
     Define the measure $\mu$ as
     \[
        \mu = \rho^2\nu +(1-\rho^2) \mu_\sc+ \frac{\rho^2}{2}\bigg(1 - \frac{1}{\beta'^2}\bigg)(\delta_{(\beta' + 1/\beta')} + \delta_{-(\beta' + 1/\beta')})
     \]
     where $d\nu (x) = \frac{(1+\beta'^2)}{(1+\beta'^2)^2 - \beta'^2 x^2}. f_\sc(x) dx$. Then, $s(z)$, as defined in equation~\eqref{eq:limiting_Stieltjes_transform}, is the Stieltjes transform of $\mu$.
    \end{lemma}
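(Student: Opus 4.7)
The plan is to compute the Stieltjes transform $s_\mu(z)$ of $\mu$ directly and verify it equals $s(z)$; since finite measures on $\mathbb{R}$ are uniquely determined by their Stieltjes transforms, this suffices. Set $a := \beta' + 1/\beta'$, so the atoms of $\mu$ sit at $\pm a$. The semicircular piece contributes $(1-\rho^2)\,s_\sc(z)$ and the two atoms together contribute
\[
    \tfrac{\rho^2}{2}\bigl(1 - 1/\beta'^2\bigr)\Bigl[\tfrac{1}{a - z} + \tfrac{1}{-a - z}\Bigr] = \rho^2\bigl(1 - 1/\beta'^2\bigr)\,\tfrac{z}{a^2 - z^2}.
\]
So the task reduces to computing $s_\nu(z)$ and verifying that, once the atom contribution is added, the result equals $\rho^2\, s_\sc(z)/\bigl(1 - \beta'^2 s_\sc(z)^2\bigr)$.

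The key observation is the factorisation $(1+\beta'^2)^2 - \beta'^2 x^2 = \beta'^2(a - x)(a + x)$, which combined with $\tfrac{1}{(a-x)(a+x)} = \tfrac{1}{2a}\bigl[\tfrac{1}{a-x} + \tfrac{1}{a+x}\bigr]$ lets me rewrite
\[
    d\nu(x) = \tfrac{1}{2\beta'}\Bigl[\tfrac{1}{a - x} + \tfrac{1}{a + x}\Bigr] f_{\sc}(x)\, dx.
\]
Applying another partial fraction in $x$, namely $\tfrac{1}{(x-z)(a-x)} = \tfrac{1}{a-z}\bigl[\tfrac{1}{x-z} + \tfrac{1}{a-x}\bigr]$ and its analogue for $a + x$, reduces every integral to $s_\sc$ evaluated at $z$, $a$ or $-a$. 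Since $a > 2$ sits outside the semicircle support and $a^2 - 4 = (\beta' - 1/\beta')^2$, the branch $s_\sc(w) = \tfrac{1}{2}\bigl(-w + \sqrt{w^2-4}\bigr)$ gives $s_\sc(\pm a) = \mp 1/\beta'$, and simplification yields
\[
    s_\nu(z) = \frac{a\, s_\sc(z) + z/\beta'}{\beta'\,(a^2 - z^2)}.
\]

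Adding the atom piece, the terms proportional to $z/\beta'^2$ cancel cleanly and one is left with
\[
    s_\nu(z) + \bigl(1 - 1/\beta'^2\bigr)\tfrac{z}{a^2 - z^2} = \frac{a\, s_\sc(z) + \beta'\, z}{\beta'\,(a^2 - z^2)}.
\]
The remaining, and main, obstacle is then the algebraic identity
\[
    \frac{a\, s_\sc(z) + \beta'\, z}{\beta'\,(a^2 - z^2)} = \frac{s_\sc(z)}{1 - \beta'^2 s_\sc(z)^2}.
\]
I would verify this by cross-multiplying and repeatedly applying the semicircle defining relation $s_\sc(z)^2 = -1 - z\, s_\sc(z)$, together with the elementary identity $a\beta' = 1 + \beta'^2$. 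After eliminating $s_\sc^2$ and $s_\sc^3$, the constant-in-$s_\sc$ terms vanish and the coefficient of $s_\sc$ simplifies exactly to $\beta'(a^2 - z^2)$. Adding the $(1-\rho^2)\, s_\sc(z)$ contribution then yields $s_\mu(z) = s(z)$, completing the proof.
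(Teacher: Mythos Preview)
Your proof is correct, but it takes the opposite direction from the paper's. The paper starts from the candidate transform $s(z)$ and applies the Stieltjes inversion formula: it computes $\tfrac{1}{\pi}\Im s(x)$ to read off the absolutely continuous density, and then evaluates $-\lim_{z\to \pm a}(z\mp a)\,s(z)$ via the residue at the poles $s_\sc(z)=\pm 1/\beta'$ to extract the point masses. You instead start from $\mu$, compute $s_\mu$ directly via partial fractions, and match it to $s(z)$.

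Your route is arguably cleaner for a verification lemma: it is pure algebra over the single relation $s_\sc^2 + z s_\sc + 1 = 0$, it avoids any limiting or branch-tracking arguments, and it automatically certifies that the density and the two atoms exhaust the full measure (the inversion-formula approach in principle leaves open the question of singular continuous mass, which the paper does not address). The paper's approach, by contrast, is the natural way to \emph{discover} $\mu$ if it were not given in advance. One small point: when you take $\sqrt{a^2-4}=\beta'-1/\beta'$ you are implicitly using $\beta'\ge 1$, i.e.\ $r\ge 3$; this is the relevant range, but it is worth stating.
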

    \begin{proof}
    We shall use the inversion formula for Stieltjes transform to get $\mu$. The inversion formula says that $\mu$ has density at $x \in \bbR$ if $\lim_{z \to x} \frac{1}{\pi} \Im(s(z))$ exists and in that case it is given by the latter. Notice that $s$ has two poles at $s_\sc(z) = \pm 1/\beta'$, that is at $z = \pm (\beta' + 1/\beta')$. Fix $x \in \bbR \backslash \{\pm (\beta' + 1/\beta')\}$. Then the density of $\mu$ at $x$ is given by
    \[
        f(x) = \lim_{z \to x}\frac{1}{\pi} \Im(s(z)) = \frac{1}{\pi} \Im (s(x)) 
    \]
    provided $s$ is continuous at $x$. Recall that $s_\sc(z) = \frac{-z +\sqrt{z^2-4}}{2}$. Clearly, $s_\sc$ is continuous at $x$ if $x \neq \pm 2$. Also, $\Im(s(x)) = 0$ if $x > 2$. If $x <2$, then 
    \begin{align*}
        \frac{1}{\pi}\Im(s(x)) &= \frac{1}{\pi}\Im\bigg( \frac{\rho^2s_\sc(x)}{1- \beta'^2 s^2_\sc(x)} + (1-\rho^2)s_\sc(z)\bigg)\\
        &= \frac{\rho^2}{\pi}\Im \bigg(\frac{s_\sc(x) (1 + \beta'^2 +\beta'^2 x \bar s_\sc(x) )}{|1 + \beta'^2 +\beta'^2 x {s_\sc(x)} |^2}\bigg) +(1-\rho^2) f_\sc(x)\\
        &= \frac{\rho^2(1+\beta'^2)}{(1+\beta'^2 - \frac{\beta'^2 x^2}{2})^2 + \frac{\beta'^4 x^2(4-x^2)}{4} } f_\sc(x) + +(1-\rho^2) f_\sc(x)\\
        &= \frac{\rho^2(1+\beta'^2)}{(1+\beta'^2)^2 - \beta'^2 x^2}. f_\sc(x) +(1-\rho^2) f_\sc(x)\\ 
        &= \rho^2 \frac{d\nu(x)}{dx} + (1-\rho^2)f_\sc(x)
    \end{align*}
    The mass at $\beta' +1/\beta'$ is given by
    \begin{align*}
        -\lim_{z\to (\beta' +1/\beta')} &(z- (\beta' +1/\beta'))s(z) \\
        &= -\lim_{z\to (\beta' +1/\beta')}\frac{ z - (\beta' +1/\beta')}{s_\sc(z) - s_\sc(\beta' + 1/\beta')} \lim_{z\to (\beta' +1/\beta')} (s_\sc(z) + 1/\beta')s(z)\\
        &= \frac{\rho^2}{2\beta'^2 s'_\sc(\beta' + 1/\beta')}
    \end{align*}
    Now, differentiating the equation $s_\sc(z) + 1/ s_\sc(z) = -z$ with respect to $z$, we get
    \[
        s'_\sc(z) -\frac{s'_\sc(z)}{s^2(z)} = -1,
    \]
    that is $s'_\sc(z) = \frac{s^2_\sc(z)}{1 - s^2_\sc(z)}$. So, 
    \[
        s'_\sc(\beta' + 1/\beta') = \frac{1}{\beta'^2 -1}.
    \]
    Hence, the mass at $(\beta' + 1/\beta')$ is given by $\frac{\rho^2}{2} (1-1/\beta'^2)$. The same calculation goes through for $-(\beta' +1/\beta')$.
\end{proof}

\subsection{Mixed contractions}
We shall first prove a result explaining the correlation structure of $\cG \cdot \bu \otimes \bv$ in terms of a low-rank perturbation of a scaled GOE matrix. First we recall the definition of Kronecker product of matrices (denoted, by an abuse of notation, by $\otimes$).
\begin{definition}[Kronecker Product of Matrices]
    Given $A \in \mathrm{Mat}_{k, l}(\bbR)$ and $B \in \mathrm{Mat}_{k',l'}(\bbR)$, the Kronecker product $A \otimes B$ of $A$ and $B$ is the $kk' \times ll'$ matrix defined as
    \[
        A \otimes B := 
        \begin{pmatrix}
            a_{11}B & a_{12}B & \cdots & a_{1l}B \\
            a_{21}B & a_{22}B & \cdots & a_{2l}B \\
            \vdots & \vdots & \ddots & \vdots \\
            a_{k1}B & a_{k2}B & \cdots & a_{kl}B
        \end{pmatrix}.
    \]
\end{definition}
In particular, notice that for two column vectors $\bu$ and $\bv$, $\bu \otimes \bv^\top = \bu \bv^\top$. It is also easy to see that $\| A \otimes B \|_F = \| A\|_F \| B\|_F$.
\begin{lemma}\label{lem:cov-representation-mixed-4}
    Suppose $\bu, \bv \in \bbS^{n - 1}$. Let $U \sim \cN(0, 1)$, 
    \[
    (\bV_1, \bV_2) \sim \cN_{2n} \left(\begin{pmatrix} \bzero_n \\ \bzero_n \end{pmatrix}, \begin{pmatrix}
        1 & \langle \bu, \bv \rangle \\
        \langle \bu, \bv \rangle & 1
    \end{pmatrix} \otimes I_n\right)
    \]
    and $Z \sim \GOE(n)$, with them being mutually independent. Then
    \begin{equation}\label{eq:mixed_low_rank}
        \cG \cdot \bu \otimes \bv \overset{d}{=} \frac{1}{\sqrt{6}} U (\bu \bv^\top + \bv \bu^\top) + \frac{1}{\sqrt{6}} [\bV_1 \bu^\top + \bu \bV_1^\top + \bV_2 \bv^\top + \bv \bV_2^\top] + \frac{1}{\sqrt{6}} \sqrt{1 + \langle \bu, \bv \rangle^2} Z.
    \end{equation}
\end{lemma}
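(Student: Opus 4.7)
Both sides of \eqref{eq:mixed_low_rank} are centered Gaussian symmetric matrices: the LHS is a linear image of $\cG$, while the RHS is an affine combination of independent Gaussian inputs. Equality in distribution therefore reduces to matching the entrywise covariances, in the spirit of Lemma~\ref{lem:rank-2-perturbation}.

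Rather than mimicking the index-by-index multinomial computation of Lemma~\ref{lem:cov-structure-pure} (which becomes unwieldy with two distinct contraction vectors $\bu$ and $\bv$), my plan is to package $M := \cG \cdot \bu \otimes \bv$ as $M_{ij} = \langle \cG, \be_i \otimes \be_j \otimes \bu \otimes \bv\rangle_F$ and invoke the general GOTE covariance identity $\Cov(\langle \cG, \cT_1\rangle_F, \langle \cG, \cT_2\rangle_F) = r\,\langle \mathrm{sym}(\cT_1), \mathrm{sym}(\cT_2)\rangle_F$, together with the standard symmetrisation identity
\[
    \langle \mathrm{sym}(\ba_1 \otimes \cdots \otimes \ba_r), \mathrm{sym}(\bb_1 \otimes \cdots \otimes \bb_r)\rangle_F = \frac{1}{r!}\sum_{\pi \in S_r}\prod_{i=1}^r \langle \ba_i, \bb_{\pi(i)}\rangle.
\]
For $r = 4$ applied to $(\be_i, \be_j, \bu, \bv)$ and $(\be_k, \be_l, \bu, \bv)$, this expresses $\Cov(M_{ij}, M_{kl})$ as $\tfrac{1}{6}$ times a sum over the $24$ permutations in $S_4$.

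The next step is to group these $24$ terms according to the image $\pi(\{3, 4\})$. Writing $\rho := \langle \bu, \bv\rangle$, three buckets emerge: (a) the four permutations preserving $\{3,4\}$ yield a Wigner-like contribution $\tfrac{1+\rho^2}{6}(\delta_{ik}\delta_{jl} + \delta_{il}\delta_{jk})$; (b) the four permutations sending $\{3,4\} \mapsto \{1,2\}$ yield the rank-one contribution $\tfrac{1}{6}(u_iv_j + v_iu_j)(u_kv_l + v_ku_l)$; (c) the remaining sixteen ``mixed'' permutations yield a bilinear mixture of $\bu$'s, $\bv$'s and Kronecker deltas, weighted by one factor of $\rho$ whenever $\pi$ pairs index $3$ with index $4$ (or vice versa). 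On the RHS, independence of $U$, $(\bV_1, \bV_2)$, $Z$ gives a three-way decomposition of the covariance: writing the three summands as $A, B, C$, the piece $\Cov(A_{ij}, A_{kl})$ matches bucket (b); the piece $\Cov(C_{ij}, C_{kl})$, using the GOE covariance $\Cov(Z_{ij}, Z_{kl}) = \delta_{ik}\delta_{jl} + \delta_{il}\delta_{jk}$ together with the prefactor $(1+\rho^2)/6$, matches bucket (a); and $\Cov(B_{ij}, B_{kl})$, using the block covariance $\Cov((\bV_1)_a, (\bV_2)_b) = \rho\,\delta_{ab}$, matches bucket (c). Coincident covariances then force coincident laws.

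The only bookkeeping-heavy step is the sixteen-term matching in bucket (c): one must enumerate the sixteen ``mixed'' permutations and pair them with the sixteen cross terms arising from expanding $\bbE[B_{ij} B_{kl}]$, tracking which pairings produce a factor of $\rho$ from a $\bV_1$--$\bV_2$ cross-covariance versus a Kronecker delta from a like-kind covariance. Every other step of the argument is mechanical.
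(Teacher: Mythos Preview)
Your proof is correct and takes a genuinely different route from the paper's. The paper proves this lemma by first establishing Lemma~\ref{lem:cov-structure-mixed-4}, a seven-case brute-force computation of $\Cov(M_{ij},M_{kl})$ distinguishing how many of $i,j,k,l$ coincide, and then verifying entry by entry that those formulas agree with the covariance of the RHS. Your approach instead packages everything through the GOTE identity $\Cov(\langle\cG,\cT_1\rangle_F,\langle\cG,\cT_2\rangle_F)=r\,\langle\mathrm{sym}(\cT_1),\mathrm{sym}(\cT_2)\rangle_F$ and the permanent formula for symmetrised inner products, giving a single $24$-term expansion valid for \emph{all} index configurations at once. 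The three-bucket grouping by $\pi(\{3,4\})$ then matches the three independent summands $A,B,C$ on the RHS term for term (I checked: buckets (a) and (b) fall out immediately, and bucket (c) does produce exactly the sixteen monomials in $6\,\Cov(B_{ij},B_{kl})$, with the $\rho$ factors landing precisely on the eight $\bV_1$--$\bV_2$ cross terms). Your route is cleaner and avoids the case analysis of Lemma~\ref{lem:cov-structure-mixed-4} altogether; the paper's route, on the other hand, yields explicit closed-form covariance formulas as a byproduct, which is helpful if one wants to sanity-check specific entries but is not otherwise used.
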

\begin{proof}
    One can directly verify that the covariance structure of $\cG\cdot\bu\otimes \bv$, which is available from Lemma~\ref{lem:cov-structure-mixed-4}, matches with that of the RHS of \eqref{eq:mixed_low_rank}.
\end{proof}
\begin{proof}[Proof of Proposition~\ref{prop:mixed_bulk}]
    The result directly follows from Lemma~\ref{lem:concentration} and Lemma~\ref{lem:cov-representation-mixed-4}.
\end{proof}
In the light of the representation \eqref{eq:mixed_low_rank}, one immediately obtains a proof of Proposition~\ref{prop:mixed_bulk}. Indeed, by the rank inequality, one obtains that the EESD of the right-hand side has the same limit as that of $\frac{1}{\sqrt{6}}\sqrt{1 + \langle \bu, \bv\rangle^2} Z$, which is $\mu_{\sc, \frac{1 + \rho^2}{6}}$. Therefore, the EESD of LHS also converges weakly to the same limit law. Now, one may show that the ESD is exponentially concentrated around the EESD in the bounded Lipschitz metric, whence the desired convergence follows.

We shall now prove Theorem~\ref{thm:total_variation}. We need to set up some notations first.

For an $m \times n$ matrix $A$, we denote by $\vec(A)$, the $mn \times 1$ vector obtained by stacking the columns of $A$:
\[
    \vec(A) = (A_{11}, \ldots, A_{n, 1}, A_{12}, \ldots, A_{n2}, \ldots, A_{1n}, \ldots, A_{nn})^\top.
\]
For a symmetric $n \times n$ matrix $A$, $\vech(A)$ denotes the vector obtained by stacking the entries on and above the diagonal in a columnwise fashion:
\[
    \vech(A) = (A_{11}, A_{12}, A_{22}, \ldots, A_{1n}, \ldots, A_{nn})^\top.
\]
For a symmetric $n \times n$ matrix $A$, one may write $\vech(A) = L \, \vec(A)$, where $L$ is an $n(n + 1) / 2 \times n$ matrix called the \emph{elimination matrix} (see, e.g., \cite{magnus1980elimination}). It can be checked that $L$ is row-orthogonal, i.e. $LL^\top = I$.

With the above notations set, define
\[
    \Sigma_{\bu, \bv} := \Cov(\vech(\cG \cdot \bu \otimes \bv)),
\]
i.e. $\Sigma_{\bu, \bv}$ is the covariance matrix of the distinct random variables in $\cG \cdot \bu \otimes \bv$. We shall first establish that $\lambda_{\min}(\Sigma_{\bu, \bu}) = \Theta(1)$, uniformly in $\bu$.%
\begin{lemma}\label{lem:cov-mat-eval}
For any $\bu \in \bbS^{n - 1}$,
\[
    \frac{1}{3} \le \lambda_{\min}(\Sigma_{\bu, \bu}) \le \lambda_{\min}(\Sigma_{\bu, \bu}) \le 4.
\]
\end{lemma}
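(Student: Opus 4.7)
The plan is to combine the rank-$2$ representation from Lemma~\ref{lem:rank-2-perturbation} (specialised to $r = 4$, so $\alpha = \beta = \sqrt{2/3}$ and $\theta = \sqrt{1/3}$) with the orthogonal invariance of $\cG$. Writing
\[
    M := \cG \cdot \bu \otimes \bu \overset{d}{=} \alpha U \bu\bu^\top + \beta(\bV \bu^\top + \bu \bV^\top) + \theta Z,
\]
with $U \sim \cN(0,1)$, $\bV \sim \cN(\bzero, I_n)$, $Z \sim \GOE(n)$ independent, the linearity of $\vech$ gives
\[
    \Sigma_{\bu,\bu} = \alpha^2 \Cov(\vech(U\bu\bu^\top)) + \beta^2 \Cov(\vech(\bV\bu^\top + \bu\bV^\top)) + \theta^2 \Cov(\vech(Z)).
\]
The first two summands are positive semidefinite, and $\Cov(\vech(Z))$ is diagonal with entries at least $1$ (on-and-above-diagonal entries of $Z$ are independent with variance $1$ off the diagonal and $2$ on it), so the lower bound $\lambda_{\min}(\Sigma_{\bu,\bu}) \ge \theta^2 = 1/3$ follows at once.

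For the upper bound I would use orthogonal invariance. For any orthogonal $O$, one has $O \cdot \cG \overset{d}{=} \cG$ (the action being on each of the $r$ legs), and a short index manipulation yields
\[
    (O \cdot \cG) \cdot \bu \otimes \bu = O\bigl(\cG \cdot (O^\top \bu) \otimes (O^\top \bu)\bigr) O^\top.
\]
Choosing $O$ with $O\be_1 = \bu$ gives $M \overset{d}{=} OM_0O^\top$, where $M_0 := \cG \cdot \be_1 \otimes \be_1$. The subtlety --- and what I expect to be the main conceptual step --- is that the $\vech$ parametrisation does not behave well under conjugation $A \mapsto OAO^\top$, so I would detour through $\vec$. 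Setting $\tilde\Sigma_{\bw,\bw} := \Cov(\vec(\cG \cdot \bw \otimes \bw))$ and using $\vec(OAO^\top) = (O \otimes O)\vec(A)$, one gets $\tilde\Sigma_{\bu,\bu} = (O \otimes O)\, \tilde\Sigma_{\be_1,\be_1}\, (O \otimes O)^\top$, so the two share the same spectrum. Since $\vech = L\vec$ for the elimination matrix $L$, which satisfies $LL^\top = I$, a Rayleigh-quotient argument (noting that $\|L^\top v\| = \|v\|$) gives
\[
    \lambda_{\max}(\Sigma_{\bu,\bu}) = \lambda_{\max}(L\, \tilde\Sigma_{\bu,\bu}\, L^\top) \le \lambda_{\max}(\tilde\Sigma_{\bu,\bu}) = \lambda_{\max}(\tilde\Sigma_{\be_1,\be_1}).
\]

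It then remains to compute $\lambda_{\max}(\tilde\Sigma_{\be_1,\be_1})$ explicitly. For $\bu = \be_1$, every off-diagonal covariance in Lemma~\ref{lem:cov-structure-pure} carries a factor $u_i$ with $i \ne 1$ and hence vanishes, so the entries of $M_0$ are mutually independent with
\[
    \Var(M_{0,11}) = 4, \quad \Var(M_{0,1j}) = 1, \quad \Var(M_{0,ii}) = \tfrac{2}{3}, \quad \Var(M_{0,ij}) = \tfrac{1}{3} \quad (i, j \ne 1,\, i \ne j).
\]
Under $\vec$, each diagonal entry becomes an uncorrelated scalar, while each off-diagonal pair $(M_{0,ij}, M_{0,ji})$ is perfectly correlated and produces a rank-one $2 \times 2$ block whose top eigenvalue is $2\Var(M_{0,ij})$. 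Taking the maximum over the four values $4$, $2 \cdot 1$, $2/3$, and $2 \cdot 1/3$ yields $\lambda_{\max}(\tilde\Sigma_{\be_1,\be_1}) = 4$, which closes the upper bound.
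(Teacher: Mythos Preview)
Your proof is correct. The lower bound is obtained exactly as in the paper: drop the two positive-semidefinite summands coming from $U$ and $\bV$ and keep $\theta^2 L\,\Cov(\vec(Z))\,L^\top$, which is diagonal with entries $1$ or $2$.

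For the upper bound you take a genuinely different route. The paper stays with a general $\bu$, writes
\[
    \Cov(\vec(X)) = \alpha^2\,\bu\bu^\top\!\otimes\!\bu\bu^\top + \beta^2(I\otimes\bu + \bu\otimes I)(I\otimes\bu + \bu\otimes I)^\top + \theta^2\Cov(\vec(Z)),
\]
and bounds the three operator norms separately, using the $AB$-versus-$BA$ eigenvalue trick to see that $(I\otimes\bu + \bu\otimes I)^\top(I\otimes\bu + \bu\otimes I) = 2(I + \bu\bu^\top)$ has top eigenvalue $4$; summing gives $\alpha^2 + 4\beta^2 + 2\theta^2 = 4$. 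You instead invoke the orthogonal invariance of the $\GOTE$ density to conjugate down to $\bu = \be_1$, note that $\tilde\Sigma_{\bu,\bu}$ and $\tilde\Sigma_{\be_1,\be_1}$ are orthogonally equivalent via $O\otimes O$, and then read off the block-diagonal spectrum of $\tilde\Sigma_{\be_1,\be_1}$ by hand. Both arguments pass through the same inequality $\lambda_{\max}(L\tilde\Sigma L^\top)\le\lambda_{\max}(\tilde\Sigma)$. The paper's route is shorter and self-contained; yours is a bit longer but explains structurally why the bound is uniform in $\bu$ and avoids the Kronecker-product eigenvalue computation.
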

\begin{proof}
Let
\[
    X = \alpha U \bu \bu^\top + \beta (\bu\bV^\top + \bV\bu^\top) + \theta Z.
\]
Then
\[
    \vec(X) = \alpha U \vec(\bu\bu^\top) + \beta \vec(\bu\bV^\top + \bV\bu^\top) + \theta \vec(Z).
\]
By the Kronecker-product identity
\[
    \vec(ABC) = (C^\top \otimes A) \vec(B),
\]
and the fact that for a vector $\bx$, $\vec(\bx) = \vec(\bx^\top) = \bx$,
we have
\begin{align*}
    \vec(X) = \alpha U (I \otimes \bu) \bu  + \beta (I \otimes \bu + \bu \otimes I) \bV + \theta \vec(Z).
\end{align*}
Therefore
\begin{align*}
    &\Cov(\vec(X)) \\
    &\quad= \alpha^2 (I\otimes \bu) \bu \bu^\top (I \otimes \bu^\top) + \beta^2 (I \otimes \bu + \bu \otimes I) (I \otimes \bu + \bu \otimes I)^\top + \theta^2 \Cov(\vec(Z)) \\
    &\quad= \alpha^2 \bu \bu^\top \otimes \bu \bu^\top + \beta^2 (I \otimes \bu + \bu \otimes I) (I \otimes \bu + \bu \otimes I)^\top + \theta^2 \Cov(\vec(Z)).
\end{align*}
Now
\[
    \Sigma_{\bu, \bu} = \Cov(\vech(X)) = L \Cov(\vec(X)) L^\top,
\]
where we recall that $L$ is the row-orthogonal elimination matrix. An immediate consequence of this is that
\[
    \Sigma_{\bu, \bu} = L \Cov(\vec(X)) L^\top \succcurlyeq \theta^2 L \Cov(\vec(Z)) L^\top,
\]
the latter being a diagonal matrix with entries either $2$ or $1$. Thus
\[
    \lambda_{\min}(\Sigma_{\bu, \bu}) \ge \theta^2 = \frac{1}{3}.
\]
Also,
\[
    \lambda_{\max}(\Sigma_{\bu, \bu}) \le \alpha^2 + \beta^2 \lambda_{\max}((I \otimes \bu + \bu \otimes I) (I \otimes \bu + \bu \otimes I)^\top) + 2 \theta^2.
\]
Here we are using the fact that for a PSD matrix $A$,
\[
    \lambda_{\max}(LAL^\top) = \|LAL^\top\|_{\op} \le \|A\|_{\op} \|L\|_{\op}^2 = \lambda_{\max}(A) \lambda_{\max}(LL^T) = \lambda_{\max}(A).  
\]
We claim that
\[
    \lambda_{\max}((I \otimes \bu + \bu \otimes I) (I \otimes \bu + \bu \otimes I)^\top) = 4.
\]
In fact this matrix has $(n - 1)$ eigenvalues equal to $2$ and $1$ eigenvalue equal to $4$. This is because the non-zero eigenvalues of this matrix equal those of
\[
    (I \otimes \bu + \bu \otimes I)^\top (I \otimes \bu + \bu \otimes I) = 2 (I + \bu\bu^\top). 
\]
Thus
\[
    \lambda_{\max}(\Sigma_{\bu, \bu}) \le \alpha^2 + 4\beta^2  + 2 \theta^2 = 4.
\]
This completes the proof.
\end{proof}
Next we will upper bound $\|\Sigma_{\bu, \bv} - \Sigma_{\bu, \bv}\|_F$. For this, it will be more convenient to work with a slightly different form of the covariance matrix.
\begin{definition}[Covariance matrix of Random Matrices]
    Given two random matrices $A$ and $B$, their covariance matrix is defined as 
    \[
        \Cov(A,B) := \bbE[(A - \bbE A)\otimes(B - \bbE B)^\top].
    \]
\end{definition}
In the same spirit, we define $\Var(A) := \Cov(A,A)$. Notice that this coincides with the standard definition when $A$ and $B$ are random vectors. Let
\begin{align*}
    &\tilde{\Sigma}_{\bu, \bv} := \Var(\cG\cdot \bu \otimes \bv),  &\tilde \Sigma_{\bu, \bu} := \Var(\cG\cdot \bu \otimes \bu).
\end{align*} 
Clearly, $\| \Sigma_{\bu, \bv} - \Sigma_{\bu, \bu}\|_F\leq \|\tilde \Sigma_{\bu, \bv} - \tilde\Sigma_{\bu, \bu}\|_F$. With these notations we have the following estimate.
\begin{lemma}\label{lem:frob_norm}
    $\|\Sigma_{\bu, \bv} - \Sigma_{\bu, \bu}\|_F \le 5 n \|\bu - \bv\|_2$.
\end{lemma}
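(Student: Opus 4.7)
The plan is to reduce to bounding the Frobenius norm difference of the matrix-valued covariance matrices $\tilde{\Sigma}_{\bu,\bv}$, since the excerpt already records $\|\Sigma_{\bu,\bv}-\Sigma_{\bu,\bu}\|_F \le \|\tilde\Sigma_{\bu,\bv}-\tilde\Sigma_{\bu,\bu}\|_F$. The starting point is the distributional representation of Lemma~\ref{lem:cov-representation-mixed-4}, which decomposes (in distribution) $\cG \cdot \bu \otimes \bv$ as a sum of three mutually independent Gaussian matrices: a scalar-Gaussian rank-two piece $T_U := \frac{U}{\sqrt{6}}(\bu\bv^\top+\bv\bu^\top)$; a vector-Gaussian rank-two piece $T_V := \frac{1}{\sqrt{6}}(\bV_1\bu^\top+\bu\bV_1^\top+\bV_2\bv^\top+\bv\bV_2^\top)$; and a rescaled GOE piece $T_Z := \frac{\sqrt{1+\rho^2}}{\sqrt{6}}Z$, where $\rho := \la\bu,\bv\ra$. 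Independence then yields
\[
    \tilde\Sigma_{\bu,\bv} = \Var(T_U) + \Var(T_V) + \Var(T_Z),
\]
so the triangle inequality reduces the problem to controlling $\|\Var(T_\bullet)|_{\bv} - \Var(T_\bullet)|_{\bu}\|_F$ for each of the three pieces separately.

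I would first compute explicit formulas for each piece by a short Gaussian moment calculation: $\Var(T_U) = \frac{1}{6} P(\bu,\bv) \otimes P(\bu,\bv)$ with $P(\bu,\bv) := \bu\bv^\top + \bv\bu^\top$; the entries of $\Var(T_V)$ are $\frac{1}{6}\bigl(\delta_{ik}Q_{jl}+\delta_{il}Q_{jk}+\delta_{jk}Q_{il}+\delta_{jl}Q_{ik}\bigr)$ where $Q(\bu,\bv) := \bu\bu^\top + \bv\bv^\top + \rho(\bu\bv^\top+\bv\bu^\top)$; and $\Var(T_Z) = \frac{1+\rho^2}{6}\Var(Z_{\GOE})$. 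For the first two pieces, both $P(\bu,\bv)$ and $Q(\bu,\bv)$ have Frobenius norm $O(1)$ and are $O(1)$-Lipschitz in $\bv$ (in Frobenius norm), as one sees via the telescoping identity $\bv\bv^\top - \bu\bu^\top = \bu(\bv-\bu)^\top + (\bv-\bu)\bu^\top + (\bv-\bu)(\bv-\bu)^\top$ together with $1-\rho = \frac{1}{2}\|\bu-\bv\|_2^2$. The $U$-piece then contributes at most $O(\|\bu-\bv\|_2)$ (a product of bounded and Lipschitz). For the $V$-piece, I would apply the elementary inequality $(a+b+c+d)^2 \le 4(a^2+b^2+c^2+d^2)$ to the squared Frobenius sum; each of the four resulting terms has a single Kronecker delta that collapses one summation to a factor $n$, and the remaining triple sum is $\|Q(\bu,\bv)-Q(\bu,\bu)\|_F^2 = O(\|\bu-\bv\|_2^2)$, producing an $O(\sqrt{n}\,\|\bu-\bv\|_2)$ contribution.

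The dominant (and leading-order) contribution comes from the $Z$-piece, where
\[
    \|\Var(T_Z)|_\bv-\Var(T_Z)|_\bu\|_F = \frac{1}{6}|1-\rho^2|\cdot\|\Var(Z_{\GOE})\|_F.
\]
A direct count of the non-zero entries $\delta_{ik}\delta_{jl}+\delta_{il}\delta_{jk}$ of $\Var(Z_{\GOE})$ gives $\|\Var(Z_{\GOE})\|_F = \sqrt{2n^2+2n} = \Theta(n)$, while $|1-\rho^2| \le 2|1-\rho| = \|\bu-\bv\|_2^2 \le 2\|\bu-\bv\|_2$ (using $\|\bu-\bv\|_2 \le 2$). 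Together these give an $O(n\|\bu-\bv\|_2)$ bound that swamps the two low-rank contributions. Summing the three estimates by the triangle inequality and keeping explicit track of the constants yields $5n\|\bu-\bv\|_2$ on the right-hand side, with some slack. The main obstacle in executing the plan is the bookkeeping for the $V$-piece: writing the bivariate Gaussian second moments of $(\bV_1,\bV_2)$ cleanly, identifying the matrix $Q(\bu,\bv)$, and organising the four Kronecker-delta terms so that the Frobenius estimate is sharp enough to remain dominated by the $Z$-piece.
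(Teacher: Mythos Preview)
Your proposal is correct and follows essentially the same route as the paper: reduce to $\tilde\Sigma$, use the representation of Lemma~\ref{lem:cov-representation-mixed-4} to split into the $U$-, $V$-, and $Z$-pieces, and bound each separately. The only notable difference is in the $V$-piece: the paper expands $\Var(T_V)$ as an explicit sum of Kronecker products $\sum_i \be_i\otimes\cdots$ and applies the triangle inequality termwise over $i$, obtaining a bound of order $n\|\bu-\bv\|$; your packaging via the matrix $Q$ and the identity $6\Cov((T_V)_{ij},(T_V)_{kl})=\delta_{ik}Q_{jl}+\delta_{il}Q_{jk}+\delta_{jk}Q_{il}+\delta_{jl}Q_{ik}$ exploits the Kronecker-delta orthogonality and yields the sharper $O(\sqrt{n}\,\|\bu-\bv\|)$. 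Both routes comfortably reach the stated constant $5n$, since the $Z$-piece is the dominant contribution in either organisation.
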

\begin{proof}
   We shall show the mentioned upper bound for $\|\tilde \Sigma_{\bu, \bv} - \tilde\Sigma_{\bu, \bu}\|_F$. From \eqref{eq:mixed_low_rank}, using the independence of $\bu$, $\bv$ and $Z$ we get that
    \begin{align*}\label{eq:Sigma_uv}
        6\Sigma_{\bu, \bv} =& (\bu \bv^\top + \bv \bu^\top)\otimes(\bu \bv^\top + \bv \bu^\top)\\
        &+ \bbE[(\bu \bV_1^\top + \bV_1 \bu^\top + \bv \bV_2^\top + \bV_2 \bv^\top)\otimes(\bu \bV_1^\top + \bV_1 \bu^\top + \bv \bV_2^\top + \bV_2 \bv^\top)]\\
        &+ (1+\rho^2) \bbE[Z \otimes Z].
    \end{align*}
    Call the three terms in RHS $6\Sigma^{(1)}_{\bu, \bv}$, $6\Sigma^{(2)}_{\bu, \bv}$ and $6\Sigma^{(3)}_{\bu, \bv}$, respectively. Now we consider 
    \[
        6(\Sigma_{\bu, \bv} - \Sigma_{\bu,\bu}) = 6\sum_{i=1}^3 (\Sigma^{(i)}_{\bu, \bv} - \Sigma^{(i)}_{\bu, \bu}).
    \]
    Now, we look at these three differences separately.\\
    \textbf{First term}: Notice that
    \begin{align*}
        &(\bu \bv^\top + \bv \bu^\top)\otimes(\bu \bv^\top + \bv \bu^\top) \\
        =& (\bu \otimes \bv^\top + \bv \otimes\bu^\top)\otimes(\bu \otimes \bv^\top + \bv \otimes \bu^\top)\\
        =&  \bu\otimes \bv^\top \otimes \bu \otimes \bv^\top +\bu\otimes \bv^\top \otimes \bv \otimes \bu^\top + \bv \otimes \bu^\top \otimes \bu \otimes \bv^\top + \bv \otimes \bu^\top \otimes \bv \otimes \bu^\top.
    \end{align*}
    So,
    \begin{align*}
        6(\Sigma^{(1)}_{\bu, \bv} - \Sigma^{(1)}_{\bu, \bu}) =& (\bu\otimes \bv^\top \otimes \bu \otimes \bv^\top - \bu\otimes \bu^\top \otimes \bu \otimes \bu^\top)\\
        &+(\bu\otimes \bv^\top \otimes \bv \otimes \bu^\top - - \bu\otimes \bu^\top \otimes \bu \otimes \bu^\top) \\
        &+ (\bv \otimes \bu^\top \otimes \bu \otimes \bv^\top - \bu\otimes \bu^\top \otimes \bu \otimes \bu^\top) \\
        &+ (\bv \otimes \bu^\top \otimes \bv \otimes \bu^\top - \bu\otimes \bu^\top \otimes \bu \otimes \bu^\top) 
    \end{align*}
    We look at the first term. 
    \begin{align*}
        &\|\bu \otimes \bv^\top \otimes \bu \otimes \bv^\top - \bu \otimes \bu^\top \otimes \bu \otimes \bu^\top\|_F\\
        \le& \|\bu \otimes (\bv-\bu)^\top \otimes \bu \otimes \bv^\top\|_F + \|\bu \otimes \bu^\top \otimes \bu \otimes (\bv-\bu)^\top\|_F\\
        =& \|\bu\| \|\bv-\bu\| \|\bu\| \|\bv\| + \|\bu\|\| \bu\| \|\bu\| \|\bv-\bu\|\\
        =& 2\|\bv - \bu\|.
    \end{align*}
    Similarly, we can bound the other three terms. Hence,
    \[
        \|\Sigma^{(1)}_{\bu, \bv} - \Sigma^{(1)}_{\bu, \bu}\|_F \le \frac{4}{3}\|\bv - \bu\|. 
    \]
    \textbf{Second term:} Notice that
    \begin{align*}
        \bbE[\cdot \otimes \bV_1 \otimes \cdot \otimes \bV_1] &= \sum_{i=1}^n \cdot \otimes \be_i^\top \otimes \cdot \otimes \be_i^\top,\\
        \bbE[\cdot \otimes \bV_1 \otimes \cdot \otimes \bV_2] &= \rho\sum_{i=1}^n \cdot \otimes \be_i^\top \otimes \cdot \otimes \be_i^\top.
    \end{align*}
    With this in mind,
    \begin{align*}
        &6(\Sigma^{(2)}_{\bu, \bv} - \Sigma^{(2)}_{\bu, \bu}) \\
        =&\bigg[\sum_{i=1}^n (\bv \otimes \be_i^\top \otimes \bv \otimes \be_i^\top - \bu \otimes \be_i^\top \otimes \bu \otimes \be_i^\top ) + \sum_{i=1}^n (\be_i \otimes \bv^\top \otimes \be_i \otimes \bv^\top - \be_i \otimes \bu^\top \otimes \be_i \otimes \bu^\top)\\
        +& \sum_{i=1}^n (\bv \otimes \be_i^\top \otimes \be_i \otimes \bv^\top - \bu \otimes \be_i^\top \otimes \be_i \otimes \bu^\top ) + \sum_{i=1}^n (\be_i \otimes \bv^\top \otimes \bv \otimes \be_i^\top - \be_i \otimes \bu^\top \otimes \bu \otimes \be_i^\top)\bigg]\\
        +& \bigg[\sum_{i=1}^n (\rho \bu \otimes \be_i^\top \otimes \bv \otimes \be_i^\top-  \bu \otimes \be_i^\top \otimes \bu \otimes \be_i^\top ) + \sum_{i=1}^n (\rho \bv \otimes \be_i^\top \otimes \bu \otimes \be_i^\top - \bu \otimes \be_i^\top \otimes \bu \otimes \be_i^\top) \\
        +& \sum_{i=1}^n (\rho \bu \otimes \be_i^\top \otimes \be_i \otimes \bv^\top - \bu \otimes \be_i^\top \otimes \be_i \otimes \bu^\top) + \sum_{i=1}^n (\rho \be_i\otimes \bv^\top\otimes \bu \otimes \be_i^\top- \be_i\otimes \bu^\top\otimes \bu \otimes \be_i^\top) \\
        +& \sum_{i=1}^n (\rho \be_i \otimes \bu^\top\otimes \bv \otimes \be_i^\top - \be_i \otimes \bu^\top\otimes \bu \otimes \be_i^\top) + \sum_{i=1}^n (\rho \bv \otimes \be_i^\top\otimes \be_i \otimes \bu^\top- \bu \otimes \be_i^\top\otimes \be_i \otimes \bu^\top) \\
        +& \sum_{i=1}^n (\rho \be_i\otimes \bu^\top\otimes \be_i \otimes \bv^\top - \be_i\otimes \bu^\top\otimes \be_i \otimes \bu^\top) + \sum_{i=1}^n (\rho \be_i \otimes \bv^\top\otimes \be_i \otimes \bu^\top - \be_i \otimes \bu^\top\otimes \be_i \otimes \bu^\top)\bigg]. 
    \end{align*}
    Notice that the first term can be bounded as follows
    \begin{align*}
        \|\bv \otimes \be_i^\top \otimes \bv \otimes \be_i^\top - \bu \otimes \be_i^\top \otimes \bu \otimes \be_i^\top\|_F \leq 2\|\bv -\bu\|. 
    \end{align*}
    and the fifth term can be bounded as follows
    \begin{align*}
        \|\rho \bu \otimes \be_i^\top \otimes \bv &\otimes \be_i^\top -  \bu \otimes \be_i^\top \otimes \bu \otimes \be_i^\top\|_F\\
        &\le \rho\|\bu \otimes \be_i^\top \otimes (\bv - \bu) \otimes \be_i^\top\|_F + (1-\rho) \|\bu \otimes \be_i^\top \otimes \bu \otimes \be_i^\top\|_F\\
        &= \rho \|\bv -\bu\| + \frac{1}{2} \|\bv -\bu\|^2\\
        &\le 2\|\bv -\bu\|.
    \end{align*}
    Similar calculations for the other terms yield that 
    \[
        \|\Sigma^{(2)}_{\bu, \bv} - \Sigma^{(2)}_{\bu, \bu}\|_F \leq 4n\|\bv -\bu\|.
    \]
    \textbf{Third term:} Notice that 
    \[
        \bbE [Z \otimes Z] = I_{n^2} + \sum_{i=1}^n \be_i \otimes \be^\top_i \otimes \be_i \otimes \be_i^\top + \sum_{1 \leq i \neq j \leq n} \be_i \otimes \be^\top_j \otimes \be_j \otimes \be^\top_i.
    \]
    So, $\|\bbE [Z \otimes Z]\|^2_F = 2n + 2n(n-1) = 2n^2$. Notice that $2 - 2\rho = \|\bu\|^2 + \|\bv\|^2 - 2 \bu^\top \bv = \|\bu - \bv\|^2$. Then,
    \begin{align*}
        (\Sigma^{(3)}_{\bu, \bv} - \Sigma^{(3)}_{\bu, \bu}) &=\frac{1}{6}\|2\bbE[Z \otimes Z] - (1+\rho^2)\bbE[Z \otimes Z]\|_F\\
        &= \frac{1}{6}(1-\rho^2) \|\bbE [Z \otimes Z]\|_F\\
        &= \frac{1}{6}(1+\rho)(1-\rho)\sqrt 2 n \\
        &\leq \frac{\sqrt 2}{6} n\|\bv - \bu\|^2\\
        &\leq \frac{\sqrt 2}{3} n\|\bv - \bu\|.
    \end{align*}
    Hence, combining the previous estimates we get that
    \[
        \|\Sigma_{\bu, \bv} - \Sigma_{\bu, \bu}\|_F \leq 5n \|\bv -\bu\|.
    \]
    This completes the proof.
\end{proof}

\begin{remark}
    The upper bound in Lemma~\ref{lem:frob_norm} is tight in general. For $\bu = \be_1$, $\Sigma_{\bu, \bu}$ is diagonal with $\lambda_{\min}(\Sigma_{\bu, \bu}) = 1/3$. Let us take $\bv = \sqrt{1 - \gamma^2} \be_1 + \gamma \be_2$, where $\gamma \in (0, 1)$. Then $\|\bu - \bv\|_2 = \sqrt{2(1 - \sqrt{1 - \gamma^2})} = \Theta(\gamma)$. In this case,  we can exactly compute $\|\Sigma_{\bu, \bv} - \Sigma_{\bu, \bu}\|_F$. Note that
    \[
        (\cG \cdot \bu \otimes \bv)_{ij} = \sum_{i_3, i_4} \cG_{ij i_3 i_4}\delta_{i_3, 1}(\sqrt{1 - \gamma^2} \delta_{i_4, 1} + \gamma \delta_{i_4, 2}) = \sqrt{1 - \gamma^2} \cG_{ij11} + \gamma \cG_{ij12}.
    \]
    It is then clear that
    \[
        \Cov((\cG \cdot \bu \otimes \bv)_{ij}, (\cG \cdot \bu \otimes \bv)_{kl}) = 0
    \]
    if $\{i, j\} \ne \{k, l\}$ and $i, j, k, l \notin \{1, 2\}$. It follows that
    \[
        \|\Sigma_{\bu, \bv} - \Sigma_{\bu, \bu}\|_F = \Theta(\gamma n).
    \]
\end{remark}

We are now ready to prove Theorem~\ref{thm:total_variation}. A key ingredient in the proof of Theorem~\ref{thm:total_variation} is the following result due to \cite{devroye2018total} and \cite{arbas2023polynomial}.
\begin{proposition}[Theorem~1.2 of \cite{devroye2018total}]\label{prop:totvar_guassians}
    \[
        \frac{1}{100} \min \{1, \|\Sigma_1^{-1/2} \Sigma_2 \Sigma_1^{-1/2}, I\|_F\} \le d_{\mathrm{TV}}(\cN(\bzero, \Sigma_1), \cN(\bzero, \Sigma_2)) \le \frac{3}{2} \min \{1, \|\Sigma_1^{-1/2} \Sigma_2 \Sigma_1^{-1/2}, I\|_F\}.
    \]
\end{proposition}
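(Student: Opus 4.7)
The plan is to reduce the multivariate comparison to a one-dimensional product problem by affine invariance, and then handle the upper and lower bounds separately. Since total variation is invariant under bijections, applying the whitening map $x \mapsto \Sigma_1^{-1/2}x$ followed by diagonalization of $M := \Sigma_1^{-1/2}\Sigma_2\Sigma_1^{-1/2}$ by an orthogonal matrix reduces the statement to the case $\Sigma_1 = I$, $\Sigma_2 = \Lambda := \mathrm{diag}(\lambda_1, \ldots, \lambda_n)$. Writing $\eta := \|M - I\|_F$, one has $\eta^2 = \sum_i (\lambda_i - 1)^2$, and the goal becomes $\tfrac{1}{100}\min(1,\eta) \le d_{\mathrm{TV}}(\cN(\bzero, I), \cN(\bzero, \Lambda)) \le \tfrac{3}{2}\min(1,\eta)$. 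The key structural feature is that both measures now factorize as products of univariate Gaussians.

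For the upper bound, the Bhattacharyya affinity factorizes as $\rho = \prod_i \rho_i$ with $\rho_i = (4\lambda_i/(1+\lambda_i)^2)^{1/4}$ computable from a one-dimensional Gaussian integral. A short Taylor expansion gives $1 - \rho_i \le C\big((\lambda_i - 1)^2 \wedge 1\big)$, whence the squared Hellinger distance satisfies $H^2 = 2(1 - \prod \rho_i) \le 2\sum_i (1-\rho_i) \le C'\min(1,\eta^2)$. Combining the classical $d_{\mathrm{TV}} \le H$ bound with the trivial $d_{\mathrm{TV}} \le 1$ then gives $d_{\mathrm{TV}} \le c\min(1,\eta)$, and careful constant-tracking pins the prefactor at $3/2$.

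For the matching lower bound, I would use the scalar test statistic $T(x) := \sum_i (\lambda_i - 1)(x_i^2 - 1)$; under $\cN(\bzero, I)$ it has mean $0$ and variance $2\eta^2$, while under $\cN(\bzero, \Lambda)$ it has mean $\eta^2$ and variance $2\sum_i (\lambda_i - 1)^2\lambda_i^2$. In the \emph{bulk} regime, where every $\lambda_i$ lies in a fixed bounded interval around $1$, Chebyshev applied to the event $\{T \ge \eta^2/2\}$ yields $d_{\mathrm{TV}} \gtrsim \min(1,\eta)$. In the complementary \emph{spike} regime, where some $\lambda_{i^*}$ is bounded away from $1$, projecting onto the $i^*$-th coordinate and invoking the univariate fact $d_{\mathrm{TV}}(\cN(0,1), \cN(0,\lambda)) \asymp 1 \wedge |\lambda - 1|$ already produces $\Omega(\min(1,\eta))$ from a single marginal. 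Combining the two regimes after optimizing the split threshold delivers the explicit constant $1/100$.

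The upper bound is essentially routine once the product structure and Bhattacharyya factorization are in place. The genuine difficulty, and the main obstacle I expect to face, is the lower bound, specifically producing a clean universal constant: one must carefully split coordinates into bulk and spike parts so that the quadratic test dominates on one side and a single sharp marginal dominates on the other, then optimize both the Chebyshev truncation threshold and the boundary between the two regimes to extract the explicit number $1/100$ rather than an unspecified $c_1 > 0$.
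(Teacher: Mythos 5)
This proposition is stated in the paper purely as a citation: it is quoted verbatim as Theorem~1.2 of Devroye, Mehrabian and Reddad (and also Arbas et al.), with no in-paper proof, so there is no internal argument to compare against. Assessed on its own merits, your reduction by whitening $x \mapsto \Sigma_1^{-1/2}x$ and orthogonal diagonalization to the case $\Sigma_1 = I$, $\Sigma_2 = \Lambda$, and the Bhattacharyya/Hellinger upper bound, are both sound. Indeed a uniform estimate of the form $1-\rho_i \le \min\{1,(\lambda_i-1)^2\}$ gives $d_{\mathrm{TV}} \le H \le \sqrt{2}\,\min(1,\eta)$, comfortably inside $3/2$.

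The lower bound, however, has a genuine gap: Chebyshev on $T=\sum_i(\lambda_i-1)(x_i^2-1)$ cannot produce $d_{\mathrm{TV}} \gtrsim \eta$ in the regime where that estimate is needed. Under $\cN(\bzero, I)$ the statistic $T$ has mean $0$ and standard deviation $\sqrt{2}\,\eta$; under $\cN(\bzero,\Lambda)$ in the bulk regime it has mean $\eta^2$ and standard deviation $\Theta(\eta)$. The mean shift is thus only $\Theta(\eta)$ standard deviations, and for $\eta < 1$, precisely where the target $d_{\mathrm{TV}} \ge \tfrac{1}{100}\eta$ is nontrivial, that is less than one standard deviation. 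Chebyshev on $\{T \ge \eta^2/2\}$ gives $\bbP_1(T \ge \eta^2/2) \le \Var_1(T)/(\eta^2/2)^2 = 8/\eta^2$, which is vacuous as $\eta\to 0$, and no choice of threshold helps because every such bound carries a factor $\Var(T)/(\text{shift})^2 \asymp 1/\eta^2$. What the bulk case actually requires is a quantitative anticoncentration or normal-approximation input for $T$ (a Berry--Esseen step, or a direct manipulation of the likelihood ratio) that converts a separation of $\Theta(\eta)$ standard deviations into a total variation of order $\eta$. Since the spike case only engages when some $\lambda_{i^*}$, and hence $\eta$, is bounded away from its baseline, the whole small-$\eta$ burden sits on the bulk argument, which as written does not close; the optimization over the bulk/spike split that you flag as the hard step is therefore not a matter of constants but of replacing Chebyshev with a sharper tool.
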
   

\begin{proof}[Proof of Theorem~\ref{thm:total_variation}]
By Proposition~\ref{prop:totvar_guassians}, we have
\begin{align*}
     &\frac{1}{100} \min\left\{1, \|\Sigma_{\bu, \bu}^{-1/2} \Sigma_{\bu, \bv} \Sigma_{\bu, \bu}^{-1/2} - I\|_F\right\} \\
     &\hspace{10em}\le d_{\mathrm{TV}}(\bbP_{\bu, \bv}, \bbP_{\bu, \bu}) \le \frac{3}{2} \min\left\{1, \|\Sigma_{\bu, \bu}^{-1/2} \Sigma_{\bu, \bv} \Sigma_{\bu, \bu}^{-1/2} - I\|_F\right\}.
\end{align*}
Now
\begin{align*}
    \|\Sigma_{\bu, \bu}^{-1/2} \Sigma_{\bu, \bv} \Sigma_{\bu, \bu}^{-1/2} - I\|_F &= \|\Sigma_{\bu, \bu}^{-1/2} (\Sigma_{\bu, \bv} - \Sigma_{\bu, \bu}) \Sigma_{\bu, \bu}^{-1/2}\|_F \\
    &\le \|\Sigma_{\bu, \bu}^{-1/2}\|_{\op}^2 \|\Sigma_{\bu, \bv} - \Sigma_{\bu, \bu}\|_F \\
    &\le \frac{\|\Sigma_{\bu, \bv} - \Sigma_{\bu, \bu}\|_F}{\lambda_{\min}(\Sigma_{\bu, \bu})}.
\end{align*}
Similarly,
\[
    \|\Sigma_{\bu, \bu}^{-1/2} \Sigma_{\bu, \bv} \Sigma_{\bu, \bu}^{-1/2} - I\|_F \ge \frac{\|\Sigma_{\bu, \bv} - \Sigma_{\bu, \bu}\|_F}{\lambda_{\max}(\Sigma_{\bu, \bu})}.
\]
The desired result now follows by plugging in the estimates from Lemma~\ref{lem:cov-mat-eval} and Lemmas~\ref{lem:frob_norm}
\end{proof}

\begin{proof}[Proof of Proposition~\ref{prop:mixed_vs_pure_lsd}]
First note that
\begin{align*}
    \|\cG \cdot \bu \otimes \bv - \cG \cdot \bu \otimes \bu\|_F^2 &= \sum_{i_1, i_2} \bigg(\sum_{i_3, i_4} \cG_{i_1i_2i_3i_4} u_{i_3} (v_{i_4} - u_{i_4})\bigg)^2 
\end{align*}
Clearly, $\Var(\cG_{i_1, i_2, i_3, i_4}) \leq 4$. Hence
\begin{align*}
    \bbE \bigg(\sum_{i_3, i_4} \cG_{i_1i_2i_3i_4} u_{i_3} (v_{i_4} - u_{i_4})\bigg)^2 =& \sum_{i_3 < i_4} \Var(\cG_{i_1, i_2, i_3, i_4}) ((u_{i_3}(v_{i_4} - u_{i_4})+ u_{i_4} (v_{i_3} - u_{i_3}))^2 \\
    &+ \sum_{i_3} \Var(\cG_{i_1, i_2, i_3, i_3}) u^2_{i_3}(v_{i_3} - u_{i_3})^2\\
    \leq & 2\sum_{i_3 < i_4} \Var(\cG_{i_1, i_2, i_3, i_4}) ((u^2_{i_3}(v_{i_4} - u_{i_4})^2+ u^2_{i_4} (v_{i_3} - u_{i_3})^2) \\
    &+ \sum_{i_3} \Var(\cG_{i_1, i_2, i_3, i_3}) u^2_{i_3}(v_{i_3} - u_{i_3})^2\\
    \leq & 8 \sum_{i_3, i_4} u_{i_3} (v_{i_4} - u_{i_4})^2\\
    =& 8\|\bv -\bu\|^2_F.
\end{align*}
It follows that $\bbE\|\cG \cdot \bu \otimes \bv - \cG \cdot \bu \otimes \bu\|_F^2 \leq 8n^2\|\bv - \bu \|^2_F$. By the Hoffman-Wielandt inequality,
\begin{align*}
    d_{W_2}(\mubar_{n^{-1/2} \cG \cdot \bu \otimes \bv}, \mubar_{n^{-1/2} \cG \cdot \bu \otimes \bv})^2 &\le \frac{1}{n} \bbE\|n^{-1/2}\cG \cdot \bu \otimes \bv - n^{-1/2} \cG \cdot \bu \otimes \bu\|_F^2 \\
    &\le 8\|\bu - \bv\|_2^2.
\end{align*}
Since $\bar{\mu}_{n^{-1/2}\cG \cdot \bu \otimes \bu} \convd \nu_{\sc, \frac{1}{3}}$, we also have $\bar{\mu}_{n^{-1/2}\cG \cdot \bu \otimes \bv} \convd \nu_{\sc, \frac{1}{3}}$ if $\|\bu - \bv\| = o(1)$. An application of Lemma~\ref{lem:concentration} completes the proof.
\end{proof}

We will now prove Theorem~\ref{thm:mixed_vs_pure_top_evalue}. The proof uses a covering argument together with Gaussian comparion inequalities. We first need a lemma.

\begin{lemma}\label{lem:interpolation}
Let $\bs, \bt \in \bbS^{n - 1}$. There is an universal constant $C > 0$ such that
\[
    \big|\Cov\big(\bs^\top (\cG \cdot \bu \otimes \bv) \bs, \bt^\top (\cG \cdot \bu \otimes \bv) \bt\big) - \Cov\big(\bs^\top (\cG \cdot \bu \otimes \bu) \bs, \bt^\top (\cG \cdot \bu \otimes \bu) \bt\big)\big| \le C \|\bu - \bv\|_2.
\]
\end{lemma}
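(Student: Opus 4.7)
The plan is to reduce the covariance difference to three cross-terms via bilinearity, then bound each via Cauchy--Schwarz and uniform variance estimates. Set $M_\bu := \cG \cdot \bu \otimes \bu$ and $N := \cG \cdot \bu \otimes (\bv - \bu)$; by linearity of the contraction in the second factor, $\cG \cdot \bu \otimes \bv = M_\bu + N$. Expanding $\Cov(\bs^\top(M_\bu + N)\bs, \bt^\top(M_\bu + N)\bt)$ by bilinearity of covariance, the quantity to be bounded equals
\begin{align*}
\Cov(\bs^\top M_\bu \bs, \bt^\top N \bt) + \Cov(\bs^\top N \bs, \bt^\top M_\bu \bt) + \Cov(\bs^\top N \bs, \bt^\top N \bt).
\end{align*}
Cauchy--Schwarz then reduces the task to two uniform variance bounds: $\Var(\bs^\top M_\bu \bs) \le C$ and $\Var(\bs^\top N \bs) \le C\|\bu-\bv\|_2^2$ for a universal $C$ (and analogously for $\bt$).

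For the first estimate, I would invoke Lemma~\ref{lem:rank-2-perturbation} at $r = 4$, giving $M_\bu \overset{d}{=} \alpha U \bu\bu^\top + \beta(\bu\bV^\top + \bV\bu^\top) + \theta Z$ with $\alpha^2 = \beta^2 = 2/3$ and $\theta^2 = 1/3$. Then $\bs^\top M_\bu \bs$ is a centred Gaussian whose variance is read off directly from the independence of $U$, $\bV$, $Z$, yielding $\alpha^2 (\bs^\top\bu)^4 + 4\beta^2 (\bs^\top\bu)^2 + 2\theta^2 \le \alpha^2 + 4\beta^2 + 2\theta^2 = 4$, using $|\bs^\top\bu| \le 1$, $\Var(\bs^\top\bV) = 1$, and the standard GOE identity $\Var(\bs^\top Z \bs) = 2$.

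For the second estimate, I would exploit linearity: $N = \|\bu-\bv\|_2 \cdot \cG \cdot \bu \otimes \hat\bx$ with $\hat\bx := (\bv-\bu)/\|\bu-\bv\|_2 \in \bbS^{n-1}$ (the case $\bu = \bv$ is trivial). Applying Lemma~\ref{lem:cov-representation-mixed-4} to $\cG \cdot \bu \otimes \hat\bx$ produces a distributional representation as a sum of Gaussian contributions from independent $U$, $(\bV_1, \bV_2)$, and $Z$. A direct variance calculation for $\bs^\top(\cG \cdot \bu \otimes \hat\bx)\bs$ using $|\bs^\top\bu|, |\bs^\top\hat\bx|, |\langle \bu, \hat\bx\rangle| \le 1$ together with the known variances of $\bs^\top \bV_i$ and $\bs^\top Z \bs$ produces a universal constant bound; multiplying by $\|\bu-\bv\|_2^2$ gives the required estimate. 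Combining, the three cross-terms are respectively at most $C\|\bu-\bv\|_2$, $C\|\bu-\bv\|_2$, and $C\|\bu-\bv\|_2^2$, with the last absorbed since $\|\bu-\bv\|_2 \le 2$. The main obstacle is the careful but routine variance bookkeeping in the mixed-contraction representation for the second estimate; everything else is pure bilinearity and Cauchy--Schwarz.
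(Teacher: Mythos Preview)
Your proof is correct and takes a genuinely different route from the paper. The paper uses an interpolation argument: it defines a smooth path $M(\alpha)$, $\alpha\in[0,1]$, through the distributional representation of Lemma~\ref{lem:cov-representation-mixed-4}, sets $f(\alpha)=\Cov(\bs^\top M(\alpha)\bs,\bt^\top M(\alpha)\bt)$, computes $f'(\alpha)$ explicitly, and bounds it by $C\|\bu-\bv\|_2$ termwise before integrating. You instead write $\cG\cdot\bu\otimes\bv=M_\bu+N$ once, expand by bilinearity, and kill the three cross-terms via Cauchy--Schwarz and uniform variance bounds obtained from Lemmas~\ref{lem:rank-2-perturbation} and~\ref{lem:cov-representation-mixed-4}. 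Your approach is more elementary (no differentiation, no path) and slightly more robust, since it only requires the endpoint representations and never needs the interpolated $M(\alpha)$ to match any contraction distribution for intermediate $\alpha$. The paper's method, on the other hand, produces an explicit formula for the covariance derivative, which could be useful if one wanted sharper constants or second-order information; for the bare Lipschitz bound of the lemma, your argument is cleaner.
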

\begin{proof}
We will use an interpolation argument. Set $\bw = \bv - \bu$ and for $\alpha \in [0, 1]$, let
\[
    \bx(\alpha) = \bu + \alpha (\bv - \bu) = \bu + \alpha \bw. 
\]
Let
\begin{align*}
    M(\alpha) := \frac{1}{\sqrt{6}} U (\bu \bx(\alpha)^\top + \bx(\alpha)\bu^\top) &+ \frac{1}{\sqrt{6}} [\bV_1 \bu^\top + \bu \bV_1^\top + \bV_2 \bx(\alpha)^\top + \bx(\alpha) \bV_2^\top] \\
    &+ \frac{1}{\sqrt{6}} \sqrt{1 + (\bu^\top \bx(\alpha)^2)} Z.
\end{align*}
Then
\[
    \cG \cdot \bu \otimes \bu \overset{d}{=} M(0) \quad \text{and} \quad \cG \cdot \bu \otimes \bv \overset{d}{=} M(1).
\]
Let
\[
    f(\alpha) = \Cov(\bs^\top M(\alpha) \bs, \bt^\top M(\alpha) \bt).
\]
Then
\[
    f(1) - f(0) = \int_0^1 f'(\alpha) \, d\alpha.
\]
Now %
\[
    f'(\alpha) = \Cov(\bs^\top M'(\alpha) \bs, \bt^\top M(\alpha) \bt) + \Cov(\bs^\top M(\alpha) \bs, \bt^\top M'(\alpha) \bt).
\]
Notice that
\[
    M'(\alpha) = \frac{1}{\sqrt{6}} U (\bu \bw^\top + \bw \bu^\top) + \frac{1}{\sqrt{6}}[\bV_2 \bw^\top + \bw \bV_2^\top] + \frac{1}{\sqrt{6}} \frac{\bu^\top \bx(\alpha)}{\sqrt{1 + (\bu^\top \bx(\alpha))^2}} (\bu^\top \bw) Z
\]
Therefore
\begin{align*}
    \Cov(\bs^\top M'(\alpha) \bs, \bt^\top M(\alpha) \bt) &= \frac{2}{3} (\bs^\top \bu) (\bs^\top \bw) (\bt^\top \bu)(\bt^\top \bx(\alpha)) \\
    &\qquad+ \frac{2}{3}[(\bs^\top\bw)(\bt^\top \bu) \rho (\bs^\top \bt) + (\bs^\top \bw) (\bt^\top \bx(\alpha) (\bs^\top \bt)] \\
    &\qquad\qquad+ \frac{1}{3} (\bu^\top \bw) (\bs^\top \bt)^2 (\bu^\top \bx(\alpha)).
\end{align*}
It follows that
\[
    |\Cov(\bs^\top M'(\alpha) \bs, \bt^\top M(\alpha) \bt)| \le C \|\bw\|_2,
\]
for some universal constant $C > 0$. Similarly,
\[
    |\Cov(\bs^\top M(\alpha) \bs, \bt^\top M'(\alpha) \bt)| \le C \|\bw\|_2.
\]
Therefore
\[
    |f(1) - f(0)| \le 2 C \|\bu - \bv\|_2.
\]
This completes the proof.
\end{proof}

\begin{proof}[Proof of Theorem~\ref{thm:mixed_vs_pure_top_evalue}]
    Let $\bu_1, \ldots, \bu_N$ be an $\varepsilon$-net for $\bbS^{n - 1}$. We first observe that for any matrix $A$,
    \[
        |\lambda_1(A) - \max_{1 \le j \le N} \bu_j^\top A \bu_j| \le 2 \varepsilon \|A\|_{\op}.
    \]
    Indeed for any $\bu \in \bbS^{n - 1}$, we have $1 \le j \le N$, s.t. $\|\bu - \bu_j\| \le \varepsilon$. Hence
    \[
        |\bu^\top A \bu - \bu_j^\top A \bu_j| \le |\bu^\top A (\bu - \bu_j)| + |(\bu - \bu_j)^\top A \bu_j| \le 2 \varepsilon \|A\|_{\op}.
    \]
    Therefore
    \begin{align*}
        |\bbE \lambda_1(M(1)) &- \bbE \lambda_1(M(0))| \\
        &\le 2 \varepsilon (\bbE \|M(1)\|_{\op} + \bbE \|M(0)\|_{\op}) + |\bbE \max_{1 \le j \le N} \bu_j^\top M(1) \bu_j - \bbE \max_{1 \le j \le N} \bu_j^\top M(0) \bu_j|.
    \end{align*}
    Since $\bbE\|Z\|_{\op} = O(\sqrt{n})$, we have 
    \[
        \bbE \|M(\alpha)\|_{\op} \le \frac{C_1\sqrt{n}}{4}
    \]
    for some universal constant $C_1 > 0$.
    
    On the other hand, applying the quantitative Sudakov-Fernique inequality of \cite{chernozhukov2015comparison} (see their Theorem~1 and Comment~1), we get
    \[
        |\bbE \max_{1 \le j \le N} \bu_j^\top M(1) \bu_j - \bbE \max_{1 \le j \le N} \bu_j^\top M(0) \bu_j| \le C' \sqrt{2 C \|\bw\|_{2} \log N} \le C_2 \sqrt{n\|\bw\|_{2} \log\big(\tfrac{1}{\varepsilon}\big)},
    \]
    where we have used the trivial covering number bound $N = O((\frac{1}{\varepsilon})^{n - 1})$. It follows that
    \[
        |\bbE \lambda_1(M(1)) - \bbE \lambda_1(M(0))| \le C_1 \varepsilon \sqrt{n} + C_2 \sqrt{n\|\bw\|_{2} \log\big(\tfrac{1}{\varepsilon}\big)}.
    \]
    This completes the proof.
\end{proof} 

\begin{proof}[Proof of Corollary~\ref{cor:mixed_limit_largest_evalue}]
Since $\|\bu - \bv\|_{2} = o(1)$, we have
\[
    \limsup_{n \to \infty} |\bbE \lambda_1(n^{-1/2} \cG \cdot \bu \otimes \bv) - \bbE\lambda_1(n^{-1/2} \cG \cdot \bu \otimes \bu)| \le C_1 \varepsilon.
\]
Letting $\varepsilon \to 0$, we get
\begin{equation}\label{eq:some_limit_1}
    \lim_{n \to \infty} |\bbE \lambda_1(n^{-1/2} \cG \cdot \bu \otimes \bv) - \bbE\lambda_1(n^{-1/2} \cG \cdot \bu \otimes \bu)| = 0.
\end{equation}
Now, as a consequence of Lemma~\ref{lem:concentration},
\begin{equation}\label{eq:some_limit_2}
    \lambda_1(n^{-1/2} \cG \cdot \bu \otimes \bv) - \bbE \lambda_1(n^{-1/2} \cG \cdot \bu \otimes \bv) \convas 0.
\end{equation}
Taking $\bv = \bu$ in the previous equation yields
\[
    \lambda_1(n^{-1/2} \cG \cdot \bu \otimes \bu) - \bbE \lambda_1(n^{-1/2} \cG \cdot \bu \otimes \bu) \convas 0.
\]
Since $\lambda_1(n^{-1/2} \cG \cdot \bu \otimes \bu) \convp \varpi_4$, we have 
\begin{equation}\label{eq:some_limit_3}
\bbE \lambda_1(n^{-1/2} \cG \cdot \bu \otimes \bu) \to \varpi_4.
\end{equation}
Combining \eqref{eq:some_limit_1}, \eqref{eq:some_limit_2} and \eqref{eq:some_limit_3} we finally get that
\[
    \lambda_1(n^{-1/2} \cG \cdot \bu \otimes \bv) \convas \varpi_4.
\]
This completes the proof.
\end{proof}

\begin{lemma}\label{lem:cov-structure-mixed-4}[Correlation structure of $\GOTE(4, n)$]
    Suppose $i,j,k,l \in[n]$ are distinct indices. Then
    \begin{align} \label{eq:var-1-mixed}
        \Var(M_{ii}) &= \frac{1}{3}(1 + (\bu^\top \bv)^2) + \frac{4}{3} (\bu^\top \bv) u_i v_i + \frac{2}{3} (u_i^2 + v_i^2) + \frac{2}{3} u_i^2 v_i^2; \\ \nonumber
        \Var(M_{ij}) &= \frac{1}{6}(1 + (\bu^\top \bv)^2) + \frac{1}{3} (\bu^\top \bv) (u_i v_i + u_j v_j) \\ \label{eq:var-2-mixed}
        &\qquad\qquad\qquad\qquad\qquad+ \frac{1}{6}(u_i^2 + u_j^2 + v_i^2 + v_j^2) + \frac{1}{6}(u_i v_j + u_j v_i)^2; \\ \label{eq:cov-1-mixed}
        \Cov(M_{ij}, M_{kl}) &= \frac{1}{6} (u_i v_j + u_j v_i) (u_k v_l + u_l v_k); \\ \label{eq:cov-2-mixed}
        \Cov(M_{ii}, M_{kl}) &= \frac{1}{3} u_i v_i (u_k v_l + u_l v_k); \\ \label{eq:cov-3-mixed}
        \Cov(M_{ii}, M_{kk}) &= \frac{2}{3} u_i v_i u_k v_k; \\ \label{eq:cov-4-mixed}
        \Cov(M_{ij}, M_{il}) &= \frac{1}{6} [u_j u_l + v_j v_l + (\bu^\top \bv) (u_j v_l + u_l v_j)] + \frac{1}{6} (u_i v_j + u_j v_i) (u_i v_l + u_l v_i); \\ \label{eq:cov-5-mixed}
        \Cov(M_{ii}, M_{il}) &= \frac{1}{3}(u_iu_l + v_iv_l) + \frac{1}{3}(\bu^\top\bv) (u_iv_l + u_lv_i) + \frac{1}{3}u_iv_i(u_iv_l + u_lv_i).
    \end{align}
\end{lemma}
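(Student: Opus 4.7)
The plan is to prove the lemma by direct computation, following the strategy of Lemma~\ref{lem:cov-structure-pure} specialised to $r=4$ with the weights $u_{i_3}v_{i_4}$ in place of $w_{i_3}\cdots w_{i_r}$. From the definition
\[
    M_{ab} \;=\; \sum_{i_3,i_4}\cG_{abi_3i_4}\,u_{i_3}v_{i_4},
\]
each covariance expands as a quadruple sum
\[
    \Cov(M_{ab},M_{cd}) \;=\; \sum_{i_3,i_4,i_3',i_4'}\bbE[\cG_{abi_3i_4}\cG_{cdi_3'i_4'}]\,u_{i_3}v_{i_4}\,u_{i_3'}v_{i_4'},
\]
and the expectation equals $4/\#\perms(\bi)$ exactly when the multisets $\bi=\{a,b,i_3,i_4\}$ and $\{c,d,i_3',i_4'\}$ coincide, vanishing otherwise.

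The next step is to regroup the sum by the underlying size-$4$ multiset $\bi$. Setting $\Psi_\bi(ab):=\sum_{(i_3,i_4):\{a,b,i_3,i_4\}=\bi} u_{i_3}v_{i_4}$, one obtains the compact form
\[
    \Cov(M_{ab},M_{cd}) \;=\; \sum_{\bi\;\supseteq\;\{a,b\}\cup\{c,d\}} \frac{4}{\#\perms(\bi)}\,\Psi_\bi(ab)\,\Psi_\bi(cd),
\]
where the containment is as multisets. Because only two free indices are available, the admissible $\bi$ form a short list parametrised by a single additional index $x\in[n]$ together with its coincidence status relative to $i,j,k,l$, and each coincidence pattern pins down $\#\perms(\bi)$ in a transparent way. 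As a sanity check, in the all-distinct case $\Cov(M_{ij},M_{kl})$ one must take $\bi=\{i,j,k,l\}$ with $\#\perms(\bi)=4!$, $\Psi_\bi(ij)=u_kv_l+u_lv_k$, and $\Psi_\bi(kl)=u_iv_j+u_jv_i$, which immediately reproduces \eqref{eq:cov-1-mixed}.

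For the remaining six identities the enumeration produces sums over the free index $x$, which I would close using $\|\bu\|^2=\|\bv\|^2=1$, $\sum_x u_x v_x=\bu^\top\bv$, $\sum_x u_a u_x v_b v_x=(\bu^\top\bv)\,u_a v_b$, and
\[
    \sum_x(u_x v_a+u_a v_x)(u_x v_b+u_b v_x) \;=\; u_a u_b+v_a v_b+(\bu^\top\bv)(u_a v_b+u_b v_a),
\]
together with careful subtraction of the contributions from $x\in\{i,j,k,l\}$ that are already handled by the coincidence subcases. Substitution and algebraic simplification then yield the stated closed forms \eqref{eq:var-1-mixed}--\eqref{eq:cov-5-mixed}.

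The main obstacle is bookkeeping: for each identity one must enumerate the possible multiset types of $\bi$, track how $\#\perms(\bi)$ varies across subcases, and avoid double-counting when the free index $x$ collapses onto an external index already enumerated separately. The variances \eqref{eq:var-1-mixed} and \eqref{eq:var-2-mixed}, which absorb the largest number of coincidence patterns, are the most intricate; the remaining five identities follow from milder versions of the same enumeration, with the final algebra driven uniformly by the normalisations $\|\bu\|=\|\bv\|=1$.
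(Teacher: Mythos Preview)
Your plan is correct and is essentially the same direct brute-force computation the paper carries out: both expand $\Cov(M_{ab},M_{cd})$ as a quadruple sum, use that $\bbE[\cG_{abi_3i_4}\cG_{cdi_3'i_4'}]=4/\#\perms(\bi)$ precisely when the two index multisets agree, and then enumerate coincidence patterns case by case, closing with $\|\bu\|=\|\bv\|=1$ and $\sum_x u_xv_x=\bu^\top\bv$. Your device of grouping by the common multiset $\bi$ and writing the factored form $\sum_\bi \tfrac{4}{\#\perms(\bi)}\Psi_\bi(ab)\Psi_\bi(cd)$ is a clean organisational layer the paper does not introduce explicitly, but it does not change the argument---it is the same enumeration with tidier bookkeeping.
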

\begin{proof}
    Unlike Lemma~\ref{lem:cov-structure-pure}, the proof involves direct brute-force computations. We only prove \eqref{eq:var-1-mixed} and \eqref{eq:cov-5-mixed} here. Equations \eqref{eq:var-2-mixed}, \eqref{eq:cov-1-mixed}, \eqref{eq:cov-2-mixed}, \eqref{eq:cov-3-mixed} and \eqref{eq:cov-4-mixed} can proved similarly. Note that
    \begin{align*}
        \Var&(M_{ii})\\
        &= \Var\bigg(\sum_{i_3,\, i_4}\cG_{iii_3i_4}u_{i_3}v_{i_4}\bigg)\\
        &= \sum_{i_3, i_4, i'_3, i'_4}\bbE [\cG_{iii_3i_4} \cG_{iii'_3i'_4}]u_{i_3}v_{i_4}u_{i'_3}v_{i'_4}\\
        &= \sum_{i_3\ne i_4\ne i} \frac{4}{4!/(2!1!1!)} (u^2_{i_3} v^2_{i_4} + u_{i_3}u_{i_4}v_{i_3}v_{i_4}) + \sum_{i_3=i_4\ne i} \frac{4}{4!/(2!2!)} u^2_{i_3} v^2_{i_3}\\
        &\qquad\qquad+ \sum_{i_4 \ne i} \frac{4}{4!/(3!1!)}(u_iv_{i_4}u_iv_{i_4} + u_iv_{i_4}u_{i_4}v_i) \\
        &\qquad\qquad\qquad\qquad+ \sum_{i_3 \ne i} \frac{4}{4!/(3!1!)}(u_{i_3}v_iu_{i_3}v_i + u_{i_3}v_iu_iv_{i_3}) + \frac{4}{4!/4!}u^2_i v^2_i\\
        &= \frac{1}{3}\sum_{i_3 \ne i} u^2_{i_3}(1-v^2_{i_3} - v^2_i) + \frac{1}{3}\sum_{i_3 \ne i}u_{i_3}v_{i_3}(\bu^\top\bv - u_{i_3}v_{i_3}-u_iv_i) + \frac{2}{3}\sum_{i_3\ne i} u^2_{i_3} v^2_{i_3} + u^2_i(1-v^2_i) \\
        &\qquad\qquad+ v^2_i(1-u^2_i) + 2u_iv_i(\bu^\top \bv - u_iv_i) + 4u^2_i v^2_i\\
        &= \frac{1}{3}(1 + (\bu^\top \bv)^2) + \frac{4}{3} (\bu^\top \bv) u_i v_i + \frac{2}{3} (u_i^2 + v_i^2) + \frac{2}{3} u_i^2 v_i^2.
    \end{align*}
    This proves \eqref{eq:var-1-mixed}. Now we prove \eqref{eq:cov-5-mixed}.
    \begin{align*}
        \Cov &(M_{ii}, M_{il})\\
        &= \Cov\bigg(\sum_{i_3, i_4} \cG_{iii_3 i_4}u_{i_3}v_{i_4}, \sum_{i'_3, i'_4}\cG_{ili'_3 i'_4}u_{i'_3}v_{i'_4}\bigg)\\
        &= \sum_{i_3, i_4, i'_3, i'_4} \Cov(\cG_{iii_3i_4}, \cG_{ili'_3i'_4})u_{i_3}v_{i_4}u_{i'_3}v_{i'_4}\\
        &= \sum_{i_4}\Var(\cG_{iili_4})u_lu_iv^2_{i_4} + \sum_{i_4} \Var(\cG_{iili_4})u_lv_{i_4}u_{i_4}v_i \\ 
        &\qquad\qquad\qquad+ \sum_{i_3}\Var(\cG_{iii_3l})u_{i_3}v_lu_iv_{i_3}
        + \sum_{i_3} \Var(\cG_{iii_3l}) u_{i_3}v_lu_{i_3}v_i\\ 
        &\qquad\qquad\qquad- [\Var(\cG_{iill})u_lu_iv^2_l + \Var(\cG_{iill})u^2_lv_iv_l + \Var(\cG_{iiil})u^2_iv_iv_l + \Var(\cG_{iiil})u_iu_lv^2_i]\\
        &= \bigg[\frac{4}{4!/(2!1!1!)}u_iu_l\sum_{i_4 \ne i, l} v^2_{i_4} + \frac{4}{4!/(3!1!)}u_iu_lv^2_i + \frac{4}{4!/(2!2!)}u_iu_lv^2_i \bigg]\\
        &\qquad\qquad\qquad+ \bigg[\frac{4}{4!/(2!1!1!)}u_lv_i\sum_{i_4 \ne i, l}u_{i_4}v_{i_4}  + \frac{4}{4!/(3!1!)}u_iu_lv^2_i + \frac{4}{4!/(2!2!)}u^2_lv_iv_l \bigg]\\
        &\qquad\qquad\qquad+ \bigg[\frac{4}{4!/(2!1!1!)}u_iv_l\sum_{i_3 \ne i, l} u_{i_3}v_{i_3}  + \frac{4}{4!/(3!1!)}u^2_iv_iv_l + \frac{4}{4!/(2!2!)}u_iu_lv^2_l \bigg]\\
        &\qquad\qquad\qquad+ \bigg[\frac{4}{4!/(2!1!1!)}v_iv_l\sum_{i_3 \ne i, l}u_{i_3}u_{i_3}  + \frac{4}{4!/(3!1!)}u^2_iv_iv_l + \frac{4}{4!/(2!2!)}u^2_lv_iv_l \bigg]\\
        &\qquad\qquad\qquad- \bigg[\frac{4}{4!/(2!2!)}u_lu_iv^2_l + \frac{4}{4!/(2!2!)}u^2_lv_iv_l + \frac{4}{4!/(3!1!)}u^2_iv_iv_l + \frac{4}{4!/(3!1!)}u_iu_lv^2_i\bigg]\\
        &=\bigg[\frac{1}{3}u_iu_l(1-v^2_i-v^2_l) + u_iu_lv^2_i + \frac{2}{3}u_iu_lv^2_i \bigg] \\
        &\qquad\qquad\qquad+ \bigg[\frac{1}{3}u_lv_i(\bu^\top\bv - u_iv_i-u_lv_l)  + u_iu_lv^2_i + \frac{2}{3}u^2_lv_iv_l \bigg]\\
        &\qquad\qquad\qquad+ \bigg[\frac{1}{3}u_iv_l(\bu^\top\bv - u_iv_i-u_lv_l)  + u^2_iv_iv_l + \frac{2}{3}u_iu_lv^2_l \bigg] \\
        &\qquad\qquad\qquad+ \bigg[\frac{1}{3}v_iv_l(1-u^2_i-u^2_l)  + u^2_iv_iv_l + \frac{2}{3}u^2_lv_iv_l \bigg]\\
        &\qquad\qquad\qquad- \bigg[\frac{2}{3}u_lu_iv^2_l + \frac{2}{3}u^2_lv_iv_l + u^2_iv_iv_l + u_iu_lv^2_i\bigg]\\
        &= \frac{1}{3}(u_iu_l + v_iv_l) + \frac{1}{3}(\bu^\top\bv) (u_iv_l + u_lv_i) + \frac{1}{3}u_iv_i(u_iv_l + u_lv_i).
    \end{align*}
     This completes the proof.
\end{proof}
\begin{proof}[Proof of Theorem~\ref{thm:mixed_orthogonal}]
    Equation \eqref{eq:mixed_low_rank} states that
    \[
    \cG \cdot \bu \otimes \bv \overset{d}{=} \frac{1}{\sqrt{6}} Q + \frac{1}{\sqrt{6}} Z,
    \]
    where $Q := U (\bu \bv^\top + \bv \bu^\top) + \frac{1}{\sqrt{6}} [\bV_1 \bu^\top + \bu \bV_1^\top + \bV_2 \bv^\top + \bv \bV_2^\top]$
    is of rank at most $4$. Suppose $Q$ has the spectral decomposition  $
        Q= \sum_{i=1}^4\xi_i y_iy^\top_i.$
    Lemma~\ref{lem:limsup_Q} shows that $\limsup |\xi_i| \leq 1$ for all $i$. Then, $-\tilde Q \preceq Q \preceq \tilde Q$ for all but finitely many $n$ almost surely, where $\tilde Q:=\sum_{i=1}^4 y_iy^\top_i$. Thus,
    \begin{equation}\label{eq:bound_Q}
        -\frac{1}{\sqrt 6}\tilde Q + \frac{1}{\sqrt 6}Z\preceq\frac{1}{\sqrt 6}Q + \frac{1}{\sqrt 6}Z\preceq \frac{1}{\sqrt 6}\tilde Q + \frac{1}{\sqrt 6}Z.
    \end{equation}
    for all but finitely many $n$ almost surely. Now, as in the proof of Theorem~\ref{thm:edge-limits}, we apply Theorem~2.1 of \cite{benaych2011eigenvalues} on the LHS and RHS of \eqref{eq:bound_Q} to get that
    \begin{align*}
        &\lambda_1\bigg(\pm \frac{1}{\sqrt 6} \tilde Q+\frac{1}{\sqrt 6}Z \bigg) \convas \frac{2}{\sqrt 6}, &\lambda_n\bigg(\pm \frac{1}{\sqrt 6} \tilde Q+\frac{1}{\sqrt 6}Z \bigg) \convas -\frac{2}{\sqrt 6}.  
    \end{align*}
    The proof is complete using \eqref{eq:bound_Q}.
\end{proof}
\begin{lemma}\label{lem:limsup_Q}
    Let $Q$ be defined as in the proof of Theorem~\ref{thm:mixed_orthogonal}. Then, almost surely we have that
    \begin{equation}\label{eq:limsup_Q}
        \limsup_{n \to \infty}\frac{1}{\sqrt n}\|Q_n\|_\op \leq 1.
    \end{equation}
\end{lemma}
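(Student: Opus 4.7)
The plan is to reduce the computation of $\|Q_n\|_\op$ to a fixed-size $4 \times 4$ symmetric eigenvalue problem. Since $Q_n$ has rank at most four and vanishes on the orthogonal complement of $V_n := \spn(\bu, \bv, \ba, \bb)$ (where $\ba = U\bv + \bV_1$ and $\bb = \bV_2$), the operator norm of $Q_n$ equals that of its restriction $\tilde Q_n$ to $V_n$, which is a $4 \times 4$ symmetric matrix once an orthonormal basis of $V_n$ is fixed. The goal is to show that $\tilde Q_n/\sqrt n$ converges entrywise almost surely to the involution $L = \bigl(\begin{smallmatrix} 0 & I_2 \\ I_2 & 0 \end{smallmatrix}\bigr)$; since the spectrum of $L$ is $\{\pm 1, \pm 1\}$, the Hoffman--Wielandt inequality then yields $\|Q_n\|_\op / \sqrt n \to 1$ a.s., which is stronger than what the lemma requires.

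To set up the coordinates, I would perform Gram--Schmidt, taking $\be_1 = \bu$, $\be_2 = (\bv - \rho \bu)/\sqrt{1 - \rho^2}$ (with $\rho := \bu^\top\bv$), $\be_3$ along the component of $\ba$ orthogonal to $\bu, \bv$, and $\be_4$ along the residual component of $\bb$, so that $\ba = \alpha_1 \be_1 + \alpha_2 \be_2 + r\, \be_3$ and $\bb = \beta_1 \be_1 + \beta_2 \be_2 + \gamma \be_3 + s \,\be_4$. The bulk of the work is then to verify, using standard Gaussian concentration (Borel--Cantelli for chi-squared tails and $\cN(0,1)$-tail bounds for inner products with fixed unit vectors), the a.s.\ estimates
\[
    \alpha_i, \beta_i = O(1), \qquad r/\sqrt n \to 1, \qquad s/\sqrt n \to 1.
\]
Once these are in hand, an explicit computation of $\tilde Q_n$ in the basis $(\be_i)$ shows that every entry of $\tilde Q_n/\sqrt n$ vanishes a.s.\ except $[\tilde Q_n]_{13}/\sqrt n \to 1$ and $[\tilde Q_n]_{24}/\sqrt n \to 1$, which gives the desired limit $L$.

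The delicate entry is $\gamma = \be_3^\top \bV_2$: here $\be_3$ depends on $\bV_1$, and $\bV_2$ is correlated with $\bV_1$ through $\Cov(\bV_1, \bV_2) = \rho I$. Writing $\bV_2 = \rho \bV_1 + \sqrt{1 - \rho^2}\,\bW$ with $\bW$ an independent standard Gaussian yields $\gamma = \rho\,\be_3^\top\bV_1 + \sqrt{1-\rho^2}\,\be_3^\top\bW$; the first summand is of order $\rho\sqrt n$ a.s.\ (since $\ba^\top\bV_1 = \|\bV_1\|^2 + O(1) \sim n$), while the second is $O(1)$ a.s.\ by conditional Gaussianity. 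Hence $\gamma = o(\sqrt n)$, which is exactly where the hypothesis $\rho \to 0$ is essential: without it, $\gamma$ would be $\Theta(\sqrt n)$ and would propagate into the $(1,3), (1,4), (2,3)$ entries of $\tilde Q_n/\sqrt n$, creating additional limit eigenvalues of magnitude greater than $1$, in analogy with the pure-contraction Lemma~\ref{lem:eigen-structure}. Handling this entry carefully is the main technical step.
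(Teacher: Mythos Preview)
Your approach is correct and in fact somewhat cleaner than the paper's. You and the paper both exploit that $Q$ has rank at most four with range contained in $\spn(\bu,\bv,\bV_1,\bV_2)$, but the execution differs. The paper works in the non-orthonormal system $\{\bu,\bv,\bV_1/\|\bV_1\|,\bV_2/\|\bV_2\|\}$: it computes $\|Q\bx\|/\sqrt n$ for each of these four vectors, shows each tends to $1$, and then expands $\|Q\bx\|^2$ for general $\bx$ in this basis, arguing that all cross terms vanish in the limit (the cross terms involve inner products such as $\bu^\top\bv$, $\bV_1^\top\bV_2/n$, etc., all of which tend to $0$ under $\rho\to 0$). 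Your route---Gram--Schmidt to an orthonormal basis, then entrywise limit of the resulting $4\times 4$ matrix $\tilde Q_n/\sqrt n$---is more systematic and delivers the stronger conclusion $\|Q_n\|_\op/\sqrt n \to 1$ a.s., whereas the paper only establishes the one-sided bound. Your identification of $\gamma = \be_3^\top\bV_2$ as the delicate entry, and its treatment via the decomposition $\bV_2 = \rho\bV_1 + \sqrt{1-\rho^2}\,\bW$, is exactly the right diagnosis of where the hypothesis $\rho\to 0$ enters; the paper handles the analogous issue implicitly through the cross term $(\bV_1')^\top\bV_2'$, which is less transparent.

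One minor imprecision: the claim ``$\alpha_i,\beta_i = O(1)$ a.s.'' is not literally true, since these are (for each $n$) Gaussian with bounded variance and hence grow like $\sqrt{\log n}$ along the sequence. What you actually need, and what your computation uses, is $\alpha_i,\beta_i = o(\sqrt n)$ a.s., which follows immediately from Borel--Cantelli and Gaussian tails. This does not affect any step of the argument.
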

\begin{proof}
    \begin{align*}
        \frac{1}{\sqrt n} Q \bu = \frac{1}{\sqrt n}U(\bv^\top \bu) \bu+  \frac{1}{\sqrt n}U\bv  + \frac{1}{\sqrt n}(\bu^\top \bV_1) \bu + \frac{1}{\sqrt n} \bV_1   + \frac{1}{\sqrt n}(\bv^\top \bu)\bV_1+\frac{1}{\sqrt n}(\bu^\top\bV_2)\bv.
    \end{align*}
    Notice that
    \begin{align*}
        &\frac{1}{\sqrt n}\|U(\bv^\top \bu) \bu\| \convas 0, &\frac{1}{\sqrt n}\|(\bv^\top \bu)\bV_1\| \convas 0,\\
        &\frac{1}{\sqrt n}\|U\bv\| \convas 0, & \frac{1}{\sqrt n}\|(\bu^\top \bV_1) \bu\| = \frac{1}{\sqrt n}|\bu^\top \bV_1|\convas 0,\\
        & \frac{1}{\sqrt n} \|\bV_1\| \convas 1, & \frac{1}{\sqrt n}\|(\bu^\top \bV_2) \bv\| = \frac{1}{\sqrt n}|\bu^\top \bV_2|\convas 0.
    \end{align*}
    Thus $\frac{1}{\sqrt n}\|Q\bu\| \convas 1$. Similarly, $\frac{1}{\sqrt n}\|Q\bv\| \convas 1$. Define $\bV'_1 = \frac{\bV_1}{\|\bV_1\|}$ and $\bV'_2 = \frac{\bV_2}{\|\bV_2\|}$. Next, notice that
    \[
        \frac{1}{\sqrt n} Q\bV'_1 = \frac{1}{\sqrt n} ((\bv^\top \bV'_1)U\bu + (\bu^\top \bV'_1)\bv + \|\bV_1\|\bu + (\bu^\top \bV_1)\bV'_1 + ((\bV'_1)^\top\bV_2)\bv + (\bv^\top \bV'_1)\bV_2). 
    \]
    The reader can check that $\frac{1}{\sqrt n}\|\|\bV_1\|\bu\| \convas 1$ and the other five terms converge to 0 almost surely. So, $\frac{1}{\sqrt n}\|Q\bV'_1\| \convas 1$ and similarly $\frac{1}{\sqrt n}\|Q\bV'_2\| \convas 1$. Now, consider arbitrary $\bx \in \bbS^{n-1}$. Write
    \[
        \bx = a \bu +b\bv + c\bV'_1 + d\bV'_2 + \bx'
    \]
    where $\bx' \in \{\bu, \bv, \bV_1, \bV_2\}^\perp$. Notice that $a$, $b$, $c$, $d$ are random variables and $\bx'\in \bbR^{n-1}$ is a random vector. One can check that 
    \[
        \lim_{n \to \infty} a^2 + b^2 + c^2 +d^2 + \|\bx'\|^2 = 1.
    \]
    So,
    \begin{align*}
        \frac{1}{n}\|Q\bx\|^2 &= \frac{1}{n}\|aQ\bu + b Q\bv + c Q \bV'_1 + d Q \bV'_2\|^2 \\
        &= \frac{1}{n}(a^2\|Q\bu\|^2 +b^2\|Q\bu\|^2 +c^2\|Q\bV'_1\|+d^2\|Q\bV'_2\|^2)\\
        &+ \frac{2}{n}(ab(\bu^\top \bv)+ bc(\bv^\top \bV'_1) + cd((\bV'_1)^\top\bV'_2) + ac(\bu^\top\bV'_1) + ad(\bu^\top\bV'_2) + bd(\bv^\top\bV'_2))\\
        &= a^2+b^2+c^2+d^2 +\epsilon,
    \end{align*}
    where 
    \begin{align*}
        \epsilon &=\sup_{\bx \in \bbS^{n-1}} \bigg[\frac{1}{n}(a^2\|Q\bu\|^2 +b^2\|Q\bu\|^2 +c^2\|Q\bV'_1\|+d^2\|Q\bV'_2\|^2)\\
        &+ \frac{2}{n}(ab(\bu^\top \bv)+ bc(\bv^\top \bV'_1) + cd((\bV'_1)^\top\bV'_2) + ac(\bu^\top\bV'_1) + ad(\bu^\top\bV'_2) + bd(\bv^\top\bV'_2))\bigg].
    \end{align*}
    Since $a$, $b$, $c$ and $d$ are bounded by 1, $\epsilon \convas 0$ uniformly over $\bx \in \bbS^{n-1}$. Thus,
    \[
        \frac{1}{n} \|Q\bx\|^2 \leq 1 + \epsilon.
    \]
    Taking supremum over $\bx \in \bbS^{n-1}$, we get that
    \[
        \limsup_{n \to \infty} \|Q\|_\op \leq 1.
    \]
    This concludes the proof.
\end{proof}
\bk

\rd

\bk

\section*{Acknowledgements}
SSM was partially supported by the Prime Minister Early Career Research Grant\\ \mbox{ANRF/ECRG/2024/006704/PMS} from the Anusandhan National Research Foundation, Govt.~of India.

\bibliographystyle{apalike}
\bibliography{refs.bib}

\appendix
\section{Auxiliary Results}
Here we collect lemmas and results borrrowed from the literature. First we define some notations.
\[
    M_n(\bbC) := \text{The set of all } n \times n \text{ matrices with complex entries}.
\]
For $A \in M_n(\bbC) $, define the Frobenius norm of $A$ by
\[
    \|A\|_{F} := \sqrt{\sum_{1 \le i \le j \le n}|A_{ij}|^2}.
\]
For $x \in \bbR^n $, let $\|x\| = \sqrt{\sum_{i = 1}^n x^2_i}$. The Operator norm of $A$ is defined as
\[
\|A\|_{\op} := \sup_{\|x\| = 1} \|Ax\|.
\]
For a random matrix $A$ with eigenvalues $\lambda_1, \ldots, \lambda_n$, let $F_{A}(x) := \frac{1}{n} \sum_{i = 1}^n \ind(\lambda_i \le x)$ be the empirical distribution function associated with the eigenvalues.  Let $\cS_n$ denotes the set of all permutations of the set $\{1, 2, \ldots, n\}$. 
\begin{lemma}[Hoffmann-Wielandt inequality]\label{lem:hoffman-wielandt}
  Let $A,B \in M_n(\bbC) $ are two normal matrices, with eigenvalues $\lambda_1(A),\lambda_2(A), \ldots, \lambda_n(A)$ and $\lambda_1(B),\lambda_2(B), \ldots, \lambda_n(B)$ respectively. Then we have
  \[
    \min_{\sigma \in \cS_n} \sum_{i = 1}^n | \lambda_i(A) - \lambda_{\sigma(i)}(B)|^2 \le \|A - B \|^2_{F}.
  \]
  An immediate consequence of this is that
  \[
        d_{W_2}(\mu_A, \mu_B)^2 \le \frac{\|A - B\|^2_F}{n}.
  \]
\end{lemma}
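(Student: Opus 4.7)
The plan is to combine the spectral decomposition of normal matrices with Birkhoff's theorem on doubly stochastic matrices, following the classical argument of Hoffman and Wielandt. Since $A$ and $B$ are normal, write $A = U D_A U^*$ and $B = V D_B V^*$, where $U, V$ are unitary and $D_A, D_B$ are diagonal matrices with the eigenvalues $\lambda_i(A)$ and $\lambda_j(B)$ on the diagonal (in some chosen order). Using $\|A - B\|_F^2 = \|A\|_F^2 + \|B\|_F^2 - 2\,\Re \Tr(A^* B)$, together with $\|A\|_F^2 = \sum_i |\lambda_i(A)|^2$ and similarly for $B$, reduces the problem to controlling the cross-term.

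Setting $W := U^* V$ (unitary), a direct computation using the cyclic property of the trace gives
\[
    \Tr(A^* B) = \Tr(W D_B W^* D_A^*) = \sum_{i,k} |W_{ik}|^2 \,\lambda_k(B) \,\overline{\lambda_i(A)}.
\]
Let $p_{ik} := |W_{ik}|^2$. Because $W$ is unitary, $P = (p_{ik})$ is a doubly stochastic matrix, so $\sum_k p_{ik} = 1 = \sum_i p_{ik}$. Using this, I would verify the identity
\[
    \|A - B\|_F^2 = \sum_{i,k} p_{ik}\, |\lambda_i(A) - \lambda_k(B)|^2,
\]
which is just the previous display rewritten after absorbing $\sum_i |\lambda_i(A)|^2$ and $\sum_k |\lambda_k(B)|^2$ into the double sum via the marginal conditions on $P$.

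By Birkhoff's theorem, $P$ decomposes as a convex combination $P = \sum_{\sigma \in \cS_n} c_\sigma P_\sigma$ of permutation matrices, with $c_\sigma \ge 0$ and $\sum_\sigma c_\sigma = 1$. Substituting this in gives
\[
    \|A - B\|_F^2 = \sum_{\sigma} c_\sigma \sum_{i = 1}^n |\lambda_i(A) - \lambda_{\sigma(i)}(B)|^2 \;\ge\; \min_{\sigma \in \cS_n} \sum_{i = 1}^n |\lambda_i(A) - \lambda_{\sigma(i)}(B)|^2,
\]
which is the required inequality. The $W_2$ corollary is then immediate: the $2$-Wasserstein distance between two empirical measures with $n$ atoms each is precisely $d_{W_2}(\mu_A, \mu_B)^2 = \tfrac{1}{n} \min_\sigma \sum_i |\lambda_i(A) - \lambda_{\sigma(i)}(B)|^2$ (realised by an optimal coupling which, for equal-weight empirical measures, is a permutation), so dividing the Hoffmann--Wielandt bound by $n$ yields the claim.

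The only mildly delicate step is the bookkeeping with complex conjugates in $\Tr(W D_B W^* D_A^*)$, since eigenvalues of normal matrices are complex in general; once one tracks this carefully, the absolute-value identity $|\lambda_i(A)|^2 + |\lambda_k(B)|^2 - 2\Re\big(\lambda_k(B)\overline{\lambda_i(A)}\big) = |\lambda_i(A) - \lambda_k(B)|^2$ makes the rewriting transparent. The Birkhoff decomposition step is standard and constitutes the conceptual heart of the argument, converting an optimisation over unitaries into one over permutations.
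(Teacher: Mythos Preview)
Your proof is correct and follows the classical Hoffman--Wielandt argument via Birkhoff's theorem. The paper itself does not supply a proof of this lemma: it is listed in the appendix as an auxiliary result borrowed from the literature, stated without demonstration. So there is nothing to compare against, and your argument is exactly the standard one that would be cited.
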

\begin{lemma}[Rank inequality]\label{lem:rank_ineq}
 Let $A, B \in M_n(\bbC)  $ are two Hermitian matrices. Then,
 \[
     \sup_{x \in \bbR} |F_A(x) - F_B(x)| \le \frac{\mathrm{rank}(A - B)}{n}.
 \]
\end{lemma}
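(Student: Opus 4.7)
The plan is to combine the Cauchy interlacing theorem with the observation that $A$ and $B$ agree on the kernel of their difference. Setting $W := A - B$ and $r := \rank(W)$, Hermiticity gives $\ker(W) = \mathrm{range}(W)^\perp$, which is a subspace $V \subset \bbC^n$ of dimension $n - r$. For any $v \in V$ we have $Wv = 0$, so $Av = Bv$, and therefore the compressions $P_V A P_V$ and $P_V B P_V$ coincide as self-adjoint operators on $V$. Let $\mu_1 \le \cdots \le \mu_{n-r}$ denote their common eigenvalues.

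By the Cauchy interlacing theorem applied to these $(n-r)$-dimensional principal compressions,
\[
    \lambda_i(A) \le \mu_i \le \lambda_{i+r}(A) \qquad \text{and} \qquad \lambda_i(B) \le \mu_i \le \lambda_{i+r}(B),
\]
for $1 \le i \le n-r$, with eigenvalues now listed in increasing order. Chaining these two chains through the common $\mu_i$ yields the two-sided eigenvalue interlacing
\[
    \lambda_i(A) \le \lambda_{i+r}(B) \qquad \text{and} \qquad \lambda_i(B) \le \lambda_{i+r}(A), \qquad 1 \le i \le n-r.
\]

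To convert this into a uniform bound on the CDFs, I would fix $x \in \bbR$ and let $k := nF_A(x)$ denote the number of eigenvalues of $A$ at most $x$. Whenever $i + r \le k$, the inequality $\lambda_i(B) \le \lambda_{i+r}(A) \le x$ holds, so $nF_B(x) \ge k - r$, giving $F_A(x) - F_B(x) \le r/n$. The reverse inequality $F_B(x) - F_A(x) \le r/n$ follows identically by swapping the roles of $A$ and $B$; taking the supremum over $x$ completes the proof.

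The main obstacle is conceptual rather than computational: one must correctly identify a subspace on which $A$ and $B$ literally agree (the kernel of $W$, which for Hermitian $W$ equals $\mathrm{range}(W)^\perp$ and has codimension $r$) and then invoke Cauchy interlacing with the right index shift. No nontrivial estimate is required beyond the bookkeeping that a codimension-$r$ compression shifts the interlacing indices by exactly $r$.
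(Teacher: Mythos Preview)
Your proof is correct. The paper does not actually prove this lemma; it is listed in the appendix under ``Auxiliary Results'' as a standard fact borrowed from the literature, with no argument given. Your approach via Cauchy interlacing on the common compression to $\ker(A-B)$ is a clean and standard way to establish the result. One small bookkeeping point you left implicit: when $k \le r$ the chain ``$i+r \le k$'' is vacuous, but then $F_A(x) = k/n \le r/n$ and the bound holds trivially; otherwise the argument is complete as written.
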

\begin{lemma}[Weyl's inequality]\label{lem:weyl_inequality}
    Let $A, B \in M_n(\bbC)$ be two Hermitian matrices with decreasing sequence of eigenvalues $\lambda_1(A), \lambda_2(A), \ldots, \lambda_n(A)$ and $\lambda_1(B), \lambda_2(B), \ldots, \lambda_n(B)$, respectively. Then, for $i \in [n]$,
    \[
        \lambda_{j'}(A) + \lambda_{i-j'+n}(B) \leq \lambda_i(A+B) \leq \lambda_j(A) + \lambda_{i-j+1}(B)
    \]
    for any $j\leq i$ and $j' \geq i$. A consequence of this is that for any $1 \le i \le n$,
    \[
        |\lambda_i(A + B) - \lambda_i(A)| \le \max \{|\lambda_1(B)|, |\lambda_n(B)|\} = \|B\|_{\op}.
    \]
\end{lemma}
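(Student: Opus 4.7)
The plan is to derive both inequalities from the Courant--Fischer min--max variational characterization of eigenvalues of Hermitian matrices, which states that for any Hermitian $H \in M_n(\bbC)$ with eigenvalues $\lambda_1(H) \ge \cdots \ge \lambda_n(H)$,
\[
    \lambda_k(H) = \max_{\dim V = k}\, \min_{\substack{v \in V \\ \|v\|=1}} \langle v, H v \rangle = \min_{\dim W = n-k+1}\, \max_{\substack{v \in W \\ \|v\|=1}} \langle v, H v \rangle.
\]
I will treat this as a known fact and build on it.

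For the upper bound $\lambda_i(A+B) \le \lambda_j(A) + \lambda_{i-j+1}(B)$ with $j \le i$, I would choose subspaces witnessing the ``min-max'' form: let $V_A$ be an $(n-j+1)$-dimensional subspace on which $\max_{v \in V_A, \|v\|=1}\langle v, A v\rangle = \lambda_j(A)$, and let $V_B$ be an $(n-(i-j+1)+1) = (n-i+j)$-dimensional subspace on which $\max_{v \in V_B, \|v\|=1}\langle v, B v\rangle = \lambda_{i-j+1}(B)$. A dimension count gives
\[
    \dim(V_A \cap V_B) \ge (n-j+1) + (n-i+j) - n = n-i+1,
\]
so any unit $v \in V_A \cap V_B$ satisfies $\langle v, (A+B)v\rangle \le \lambda_j(A) + \lambda_{i-j+1}(B)$. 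Taking the minimum over such $v$ and then using the min-max characterization of $\lambda_i(A+B)$ on the $(n-i+1)$-dimensional subspace $V_A \cap V_B$ yields the claim.

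For the lower bound $\lambda_i(A+B) \ge \lambda_{j'}(A) + \lambda_{i-j'+n}(B)$ with $j' \ge i$, I would apply the upper bound already established to the matrices $-A$ and $-B$ (whose eigenvalues are the reverses of those of $A$ and $B$), translating the indices accordingly; equivalently, one may repeat the argument above using the ``max-min'' characterization and a dual dimension count. Either route is a routine re-indexing.

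For the stated consequence, specialize $j=i$ in the upper bound to obtain $\lambda_i(A+B) \le \lambda_i(A) + \lambda_1(B)$, and $j'=i$ in the lower bound to obtain $\lambda_i(A+B) \ge \lambda_i(A) + \lambda_n(B)$. Since $\lambda_1(B) \le |\lambda_1(B)|$ and $-\lambda_n(B) \le |\lambda_n(B)|$, together these give
\[
    |\lambda_i(A+B) - \lambda_i(A)| \le \max\{|\lambda_1(B)|,|\lambda_n(B)|\} = \|B\|_{\op},
\]
where the last equality uses that $B$ is Hermitian so $\|B\|_{\op}$ equals its largest eigenvalue in absolute value. There is no serious obstacle here; the only subtlety is bookkeeping the index shifts and the dimension-count to ensure the intersecting subspace has the right dimension to feed into the Courant--Fischer characterization of $\lambda_i(A+B)$.
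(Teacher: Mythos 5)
The paper states this lemma without proof, listing it in the appendix under ``Auxiliary Results'' as a standard fact borrowed from the literature. Your proof via the Courant--Fischer min--max theorem is correct and is the standard textbook argument: the dimension count $\dim(V_A \cap V_B) \ge n-i+1$ is the right one, the reduction of the lower bound to the upper bound applied to $-A$, $-B$ works once the indices are translated (via $\lambda_k(-A) = -\lambda_{n-k+1}(A)$), and specializing $j=i$ and $j'=i$ correctly yields the two-sided bound $\lambda_n(B) \le \lambda_i(A+B)-\lambda_i(A) \le \lambda_1(B)$, hence the operator-norm consequence.
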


\end{document}